\newtheorem{theorem}{Theorem}[section]
\newtheorem{conjecture}{Conjecture}[section]
\newtheorem{definition}{Definition}[section]
\newtheorem{lemma}{Lemma}[section]
\newtheorem{remark}{Remark}[section]
\newtheorem{remarks}{Remark}[section]
\newtheorem{examples}{Examples}[section]
\newtheorem{example}{Example}[section]
\newcommand*\modd[3]{#1\equiv#2\;( {\rm mod}\;  #3)}
\newcommand*\notmodd[3]{#1\not\equiv#2\;( {\rm mod}\;  #3)}
\title{Weakly and Strongly Admissible Triplets for a Collatz-Type Map}
\author{Abderrahman  Bouhamidi\footnote{L.M.P.A, Universit\'e du
		Littoral, 50 rue F. Buisson BP699, F-62228 Calais-Cedex, France.  \href{mailto:abderrahman.bouhamidi@univ-littoral.fr}{abderrahman.bouhamidi@univ-littoral.fr}}}
\date{November 11,  2025}
\begin{document}
\maketitle

\begin{abstract}
	%% Text of abstract
In this paper, we investigate a class of Collatz-like problems associated with weakly and strongly admissible triplets of integers. This framework extends the classical Collatz mapping, providing a systematic method for generating triplets with convergence to cycles, thereby bypassing the difficulties inherent in solving Diophantine equations.
We introduce several special families of admissible triplets and establish general structural properties.
In addition, we propose conjectures that generalize the classical Collatz conjecture. Bounds on the lengths of potential non-trivial cycles are derived and analyzed, and two algorithms are presented for computing lower bounds on cycle lengths. Finally,  experimental  tests are given to illustrate our approach.
\end{abstract}

\section{Introduction}
The classical Collatz problem  also known as   Kakutani, Syracuse or  $3n+1$ problem, concerns the sequence of positive integers generated by the  iterations of the  Collatz mapping  $C:\mathbb{N}\longrightarrow \mathbb{N}$ defined as:
\begin{equation}\label{CollatzMap}
	C(n)=\left\{\begin{array}{lcc}
		n/2&\mathrm{if}& n\;\mathrm{is\; even}\\ 
		(3n+1)/2&\mathrm{if}& n\;\mathrm{is\; odd}, \\
	\end{array}\right.
\end{equation}
where $\mathbb{N}=\{1,2,3,\ldots\}$  is the set  of integers $\geq 1$. For every $k\in\mathbb{N}_0:=\mathbb{N}\cup\{0\}$, the notation $C^{(k)}$ stands for the $k$-th iterate of the mapping $C$.  The well known conjecture of the $3n+1$ problem  asserts that: Starting with  any initial integer $n\in\mathbb{N}$, there is a positive integer $k\in\mathbb{N}_0$ such that $C^{(k)}(n)=~1$. 
The simplicity of its  formulation makes this conjecture very popular. Although this problem is very simple to formulate, it is well known that finding a rigorous and   a complete proof of it  remains an open problem challenge  in mathematics that require solid arguments \cite{Lagarias1985}.  Despite this difficulty, many interesting results are established in the literature and several  generalizations of the  Collatz mappings have arisen since many  years \cite{Allouche1978,MatthewsWatts1984,MatthewsWatts1985,Lagarias1990,Eliahou1993}.  Relevant literature and  extensive  surveys as well as historical  discussion  on this subject may be found in  \cite{Lagarias1985,Lagarias2011}.     In an attempt to understand the outlines of a possible proof of such a conjecture, many authors have studied integer decomposition techniques involving the numbers 2 and 3. Others have instead tried to give generalizations of the conjecture that will perhaps allow general properties to be derived. In this article, we have chosen to follow the second direction. So, let us start our paper by given a brief  and non exhaustive overview on some generalizations of the Collatz problem previously studied   in the literature.  For more details, we refer the reader to \cite{Lagarias1985,Lagarias2011, Wirsching1998}.  Before that,  let us introduce the following notation, for a positive integer   $d\geq 2$, the notation $[n]_d$  denotes the remainder of the  integer $n$ in the Euclidean division of $n$ by $d$ with the standard   condition   $0\leq [n]_d<d$.  We recall that  for all positive integer $n$ we have  $[-n]_d=d-[n]_d$.  Now, let us recall the following knowen  generalizations of the Collatz map:

\begin{itemize}
	\item In    1979, Allouche \cite{Allouche1978} proposed  the following generalization, see  the resume in \cite{Lagarias2011},
	$$
	A_{d,m}(n)=\left\{\begin{array}{ccc}
		n/d&\mathrm{if}& \modd{n}{0}{d}\\
		(m n-m [n]_d)/d&\mathrm{if}& \notmodd{n}{0}{d},\\
	\end{array}\right.
	$$
	in which the parameters $d$ and $m$ satisfy $d\geq 2$, $gcd(m,d)=1$.  The generalization of Allouche is an extension to  the work introduced previously by Hasse.   The author noted that it is easy to show that any
	mapping in Hasse’s class with $1 \leq  m < d$ has a finite number of cycles, and for this mapping
	all orbits eventually enter one of these cycles.    
	\item  In 1984,  Matthews and Watts \cite{MatthewsWatts1984,MatthewsWatts1985}, see also
	\cite{Matthews2010},  proposed a complete generalization  to the classical Collatz  problem which  may be seen as an extension of the mapping previously proposed by Hasse.   The generalization  of Matthews and Watts, may be  summarized briefly  as following: Let $d\geq 2$ be a positive integer and $m_0,\ldots,m_{d-1}$ be non-negative integers and let 
	$r_0,\ldots,r_{d-1}$ be  relative integers satisfying the conditions $\modd{r_i}{im_i}{d}$, for $i=0,\ldots,d-1$.  Thus, the general mapping denoted here by $M:\mathbb{Z}\longrightarrow \mathbb{Z}$ and 
	defined in \cite{MatthewsWatts1984,MatthewsWatts1985} is given by the  formula
	\begin{equation}\label{MatthewsMap}
		M(n)=\dfrac{m_in-r_i}{d}\;\;\textrm{if}\; \modd{n}{i}{d}.
	\end{equation}
	\item In 1990,  Lagarias \cite{Lagarias1990},  proposed the following map
$L_\beta(n)=\left\{\begin{array}{ccc}
	n/2&\mathrm{if}& n\;\mathrm{is\; even}\\
	(3 n+\beta)/2&\mathrm{if}& n\;\mathrm{is\; odd},\\
\end{array}\right.$
with  $\beta\geq 1$ is an integer  such that $gcd(\beta,6)=1$.   
\end{itemize}
In this  paper, we will restrict our attention  only to the mappings acting  from $\mathbb{N}$ into $\mathbb{N}$.   The first conjecture we introduce  here is stated in simple manner as following:
\begin{conjecture}\label{Myconjecture}
	Starting with a positive integer  $n\geq 1$:
	\begin{itemize}
		\item[$\bullet$]  If $n$  is a multiple of $10$  then remove  the zero on the right.  Otherwise, 
		multiply it by $6$, add  $4$ times  its  last digit and divide the result  by $5$.
		\item Repeat the process infinitely.
	\end{itemize}
	Then,  regardless the starting number, the process eventually reaches $4$  after a finite number of iterations.
\end{conjecture}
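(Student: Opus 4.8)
Conjecture~\ref{Myconjecture} is a Collatz-type statement, and I would attack it through the two obstructions common to all such problems: divergent orbits and non-trivial cycles. First I would make the map explicit. Writing $n=10q+r$ with $q=\lfloor n/10\rfloor$ and $r=[n]_{10}\in\{0,\dots,9\}$, the identity $6n+4r=60q+10r=10(6q+r)$ shows that the second branch is integral, so the map $C$ of the conjecture is
\[
C(n)=\begin{cases} q, & r=0,\\[2pt] 2(6q+r), & r\neq 0,\end{cases}
\]
and it sends $\mathbb N$ into $\mathbb N$ (if $r\neq0$ then $C(n)\ge 2r\ge 2$, and if $n=10q$ with $q\ge1$ then $C(n)=q\ge1$). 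Written over the common denominator $10$, $C$ coincides with the Matthews--Watts map~\eqref{MatthewsMap} for modulus $d=10$, multipliers $m_0=1$, $m_1=\dots=m_9=12$, and shifts $r_0=0$, $r_i=-8i$ for $1\le i\le 9$, the congruence $\modd{r_i}{i m_i}{10}$ being immediate. I would also record that the trivial cycle of the conjecture is $4\to 8\to 16\to 24\to 32\to 40\to 4$ (verified termwise) and that $C$ has no fixed point in $\mathbb N$, since neither $q=10q$ nor $2q+r=0$ is solvable with $q\ge0$, $r\ge1$.

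Next I would split the conjecture into the equivalent pair of claims: (i) every orbit of $C$ is bounded; and (ii) the only cycle of $C$ in $\mathbb N$ is $\{4,8,16,24,32,40\}$. The implication ``(i) and (ii) $\Rightarrow$ Conjecture~\ref{Myconjecture}'' is immediate, since a bounded orbit is eventually periodic, hence enters a cycle, and $4$ lies on the only cycle. For (ii) the plan is to combine a direct verification that every $n\le N$ reaches $4$, for the largest computationally feasible $N$ — which already rules out non-trivial cycles meeting $\{1,\dots,N\}$ — with the bounds on cycle lengths and the two lower-bound algorithms developed below, which turn the range $N$ into an explicit $L_0$ such that any hypothetical non-trivial cycle has length $\ge L_0$. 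I stress that this still does not settle (ii) outright.

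Claim (i) is where the real difficulty lies. The Matthews--Watts contraction condition holds comfortably here, $\prod_{i=0}^{9} m_i = 12^{9}<10^{10}=d^{d}$, so the expected per-step factor along a ``generic'' orbit is $12^{9/10}/10\approx 0.94<1$ and orbits should be bounded; but turning this statistical prediction into a theorem is exactly the obstruction familiar from the $3n+1$ problem, namely controlling the arithmetic correlations among the successive residues $[C^{(k)}(n)]_{10}$, and I do not expect a full proof of (i) to be within reach. The realistic target, in the spirit of Terras's theorem for the classical map, would be an ``almost all'' result: by analysing the induced action of $C$ on $\mathbb Z/10^{k}\mathbb Z$ and showing that all but a vanishing fraction of residue classes mod $10^{k}$ drop below their starting value within $k$ steps, one should obtain that the set of integers whose orbit reaches $4$ has natural density $1$. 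Establishing convergence for \emph{every} $n$ I regard as the essential and, at present, insurmountable step.
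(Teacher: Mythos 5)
The statement you are trying to prove is a \emph{conjecture}, and the paper itself offers no proof of it: it only (a) rewrites the rule as the map $T$ in \eqref{mapT0}, (b) reformulates the claim as Conjecture~\ref{MyConjecture_2}, asserting that the triplet $(10,12,8)_{\pmb{+}}$ is strongly admissible of order one with unique cycle $\Omega(4)$ of length $6$, (c) exhibits it as the member $p=3$, $q=1$ of the family $(2^p+2^q,\,2^p+2^{q+1},\,2^p)_{\pmb{+}}$ of Conjecture~\ref{MainConjecture2p2q}, for which only the existence of the trivial cycle is actually proved, and (d) reports computational verification up to $5\times 10^{11}$. So there is no proof in the paper to measure your attempt against, and your candid admission that claims (i) and (ii) remain open is the accurate state of affairs rather than a defect peculiar to your write-up.

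Within those limits, everything you do assert is correct and consistent with the paper. The integrality computation $6n+4r=10(6q+r)$ and the closed form $C(n)=2(6q+r)$ on the non-multiple-of-$10$ branch, the termwise verification of the cycle $4\to8\to16\to24\to32\to40\to4$, the absence of fixed points, and the embedding into the Matthews--Watts form \eqref{MatthewsMap} with $d=10$, $m_0=1$, $m_i=12$, $r_i=-8i$ all check out; the last of these reproduces exactly the paper's Remark on the case $\kappa_0=+1$ with $\alpha=12$, $\beta=8$. Your heuristic $12^{9/10}/10\approx 0.94<1$ is the standard Matthews--Watts contraction criterion and correctly explains why boundedness is \emph{expected}; the paper does not invoke it explicitly but relies on the same kind of evidence. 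The one place where your framing adds something the paper does not pursue is the suggested density-one (Terras-style) result via the action on $\mathbb{Z}/10^k\mathbb{Z}$; conversely, the paper's Section~4 machinery (Lemma~\ref{LemmaexistenceCycle} and Theorems~\ref{Main_theorem_4.3}--\ref{Main_theorem_4.4}) is precisely the quantitative version of your step (ii), turning a verified range $N$ into a lower bound on the length of any hypothetical non-trivial cycle via continued-fraction convergents of $\log_{10}(12)$. In short: no step of yours is wrong, but neither you nor the paper proves the conjecture, and the genuinely missing ingredient is the one you name --- ruling out divergent trajectories and non-trivial cycles for \emph{every} starting value.
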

More details on this conjecture are summarized  in \cite{Bouhamidi_conj10.6.4}.  The previous conjecture is in fact associated to the operator
$T:~\mathbb{N}~\longrightarrow~\mathbb{N}$ given by
\begin{equation}\label{mapT0}
	T(n)=\left\{\begin{array}{ccc}
		n/10&\mathrm{if}& \modd{n}{0}{10}\\
		(6 n+4 [ n]_{10})/5&\mathrm{if}& \notmodd{n}{0}{10},\\
	\end{array}\right.
	\; \mbox{or}\;
	T(n)=\left\{\begin{array}{ccc}
		n/10&\mathrm{if}& \modd{n}{0}{10}\\
		(12 n+8 [ n]_{10})/10
		&\mathrm{if}& \notmodd{n}{0}{10}.\\
	\end{array}\right.
\end{equation}

In this paper, we present a natural extension of the classical Collatz conjecture, as well as our proposed Conjecture \ref{Myconjecture}, formulated through the  concept of weakly and strongly admissible triplets of integers. By focusing on admissible triplets rather than individual integers, we are able to uncover new structural properties and families of sequences exhibiting predictable behavior.  This study not only offers new insights into Collatz-type dynamics but also provides tools that may facilitate further exploration of integer sequences with similar properties, opening avenues for both theoretical and computational investigations. The outline of this paper is as follows.
In Section 2, we introduce the concept of admissible triplets and present their main definitions and properties. Section 3 focuses on special classes of admissible triplets that yield trivial cycles, and several general conjectures are formulated. In Section 4, we study some properties concerning lower bounds for possible  non-trivial cycle lengths. Two algorithms will be derived from specific  theoretical results. In the last section, we will present  few experimental examples  illustrating the announced  conjectures and the theoretical study.

%-------------------------------------------------
\section{Weakly  and strongly admissible  triplets of integers -  Definitions}\label{SectionGeneralization}
%------------------------------------------------
%-------------------------------------------------
%\subsection{Definitions}\label{SubSectionGeneralization}
%------------------------------------------------
Let $d \geq2$ be a positive integer number and consider a pair $(\alpha,\beta)$ of two other integers with $\alpha>d$  and  $\beta$ may be negative. The integers  $d$, $\alpha$ and $\beta$  are  such that:   $\notmodd{\alpha}{0}{d}$ and  $\notmodd{\beta}{0}{d}$.  In the following definition, we allow $gcd(\alpha,d)$ or $gcd(\beta,d)$ to be great then $1$. A natural and obvious  extension to the classical Collatz mapping  is  the mappings $T:\mathbb{N}\longrightarrow \mathbb{N} $ given  by
\begin{equation}\label{mapT}
	T (n)=\left\{\begin{array}{ccc}
		n/d&\mathrm{if}& \modd{n}{0}{d}\\
		(\alpha n+\beta [\kappa_0 n]_d)/d&\mathrm{if}& \notmodd{n}{0}{d},\\
	\end{array}\right.
\end{equation}
where the parameter $\kappa_0=\pm1$. The triplet  $(d,\alpha,\beta)$   will be  denoted by $(d,\alpha,\beta)_{\pmb{+}}$ and 
$(d,\alpha,\beta)_{\pmb{-}}$ for the case $\kappa_0=+1$ and for the case  $\kappa_0=-1$, respectively.   The mapping  \eqref{mapT} encompasses obviously the classical Collatz mapping  \eqref{CollatzMap} together with the introduced mapping in \eqref{mapT0}.  We have the following obvious result.

\begin{theorem}
	A necessary and sufficient condition that the 
	mapping
	$T:\mathbb{N}\longrightarrow \mathbb{N} $  given by \eqref{mapT} 
	is well defined from  $\mathbb{N}$ into $\mathbb{N}$ is that the triplet 
	$(d,\alpha,\beta)_{\pmb{\pm}}$ associated to $T$    satisfy the following condition
	\begin{equation}\label{CdtCollatztriplet}
		\alpha+\kappa_0\beta>\dfrac{(\kappa_0-1)}{2}\beta\,d \quad\mathrm{and}\quad \modd{\alpha+\kappa_0\beta}{0}{d}.
	\end{equation}
\end{theorem}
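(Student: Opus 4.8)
The plan is to analyze directly when the two branches of $T$ in \eqref{mapT} each land in $\mathbb{N}$. The even-branch (division by $d$ when $\modd{n}{0}{d}$) always produces a positive integer, so the entire content of the statement concerns the odd-branch $n\mapsto (\alpha n + \beta[\kappa_0 n]_d)/d$ for $n$ with $\notmodd{n}{0}{d}$. I would separate the requirement into two independent demands: (i) \emph{integrality} — the numerator $\alpha n + \beta[\kappa_0 n]_d$ must be divisible by $d$ for every such $n$; and (ii) \emph{positivity} — the resulting value must be $\geq 1$, equivalently $\geq 0$ since it is an integer, for every such $n$.

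For integrality, the key observation is that $[\kappa_0 n]_d \equiv \kappa_0 n \pmod d$, so modulo $d$ the numerator is $\alpha n + \beta \kappa_0 n = (\alpha + \kappa_0\beta)\, n$. Thus $d \mid \alpha n + \beta[\kappa_0 n]_d$ for all $n$ \emph{not} divisible by $d$ iff $d \mid (\alpha+\kappa_0\beta)n$ for all such $n$; choosing $n$ with $\gcd(n,d)=1$ (e.g.\ $n=1$) forces $\modd{\alpha+\kappa_0\beta}{0}{d}$, and conversely that congruence clearly suffices. This yields the second condition in \eqref{CdtCollatztriplet}. For positivity I would split on the sign $\kappa_0=\pm1$. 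When $\kappa_0=+1$, one has $[\,n\,]_d \geq 0$ and $\alpha>0$, so the numerator is at least $\alpha n \geq \alpha > 0$; the binding constraint comes from writing the numerator at its smallest, which (using $0\le[n]_d\le d-1$ and the worst case $n=1$, $[n]_d=1$) reduces to $\alpha+\beta>0$, matching \eqref{CdtCollatztriplet} since $(\kappa_0-1)/2=0$ there. When $\kappa_0=-1$, one uses $[-n]_d = d-[n]_d$ so the numerator becomes $\alpha n + \beta(d-[n]_d) = \alpha n - \beta[n]_d + \beta d$; minimizing over admissible $n$ (again the extreme case $n=1$, with $[n]_d=1$, against the sign of $\beta$) produces the inequality $\alpha - \beta + \beta d > 0$, i.e.\ $\alpha+\kappa_0\beta > \tfrac{(\kappa_0-1)}{2}\beta d$ with $(\kappa_0-1)/2=-1$. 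I would present this as a short case check rather than grinding each subcase.

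The main obstacle, and the point that needs the most care, is the positivity analysis for $\kappa_0=-1$: there the term $-\beta[n]_d$ can be made negative (depending on $\mathrm{sgn}(\beta)$), so one must genuinely verify that the infimum of the numerator over all $n\in\mathbb{N}$ with $\notmodd{n}{0}{d}$ is attained (or approached) at a specific residue/size and that the stated inequality is exactly the condition for that infimum to be positive — in particular one must confirm that once the congruence $\modd{\alpha+\kappa_0\beta}{0}{d}$ holds, the numerator takes values in $d\mathbb{Z}$ and the smallest such value is $\geq d$ (hence the quotient is $\geq 1$) precisely under the displayed strict inequality. I would also remark that the hypotheses $\alpha>d$, $\notmodd{\alpha}{0}{d}$, $\notmodd{\beta}{0}{d}$ are ambient standing assumptions from the start of Section~\ref{SectionGeneralization} and are used only to keep the map genuinely ``Collatz-type'' (two distinct branches), not in the equivalence itself. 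Finally I would note the converse direction is immediate: assuming \eqref{CdtCollatztriplet}, both branches land in $\mathbb{N}$ by the computations just described, so $T$ is well defined.
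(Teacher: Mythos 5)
Your overall strategy---split the odd branch into an integrality condition and a positivity condition, extract the congruence and the inequality from the single value $T(1)$ for necessity, and minimize the numerator over residues for sufficiency---is the same as the paper's, and your mod-$d$ derivation of $\modd{\alpha+\kappa_0\beta}{0}{d}$ is a clean (equivalent) repackaging of the paper's explicit computation of $T(n)=\alpha q_n+\frac{\alpha+\kappa_0\beta}{d}[n]_d+\frac{1-\kappa_0}{2}\beta$. However, there is one genuine flaw in the plan as stated, and it sits exactly at the point you yourself flag as delicate. You assert that the standing hypotheses ($\alpha>d$, $\notmodd{\alpha}{0}{d}$, $\notmodd{\beta}{0}{d}$) are ``used only to keep the map genuinely Collatz-type, not in the equivalence itself,'' and that the displayed inequality is ``exactly the condition'' for the infimum of the numerator to be positive. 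Both claims are false in the subcase $\kappa_0=-1$, $\alpha<\beta$. There the odd branch equals $\alpha q_n+\beta+\frac{(\alpha-\beta)}{d}[n]_d$, whose minimum over $n$ is attained at the residue $[n]_d=d-1$ (not at $n=1$), with value $\frac{\alpha(d-1)+\beta}{d}$. The positivity of this quantity is \emph{not} equivalent to $\alpha-\beta+\beta d>0$; it follows instead from the standing hypothesis $\alpha>d>0$ together with $\beta>\alpha>0$. The paper closes this subcase with precisely that observation (``In this case, we have $\beta>\alpha\geq d\geq 2$''). If the standing hypothesis on $\alpha$ is dropped, the ``if'' direction of the theorem actually fails: for $d=3$, $\alpha=-1$, $\beta=2$, $\kappa_0=-1$ one has $\alpha+\kappa_0\beta=-3\equiv 0\ (\mathrm{mod}\ 3)$ and $-3>-6=\frac{(\kappa_0-1)}{2}\beta d$, so \eqref{CdtCollatztriplet} holds, yet $T(2)=(-2+2\cdot[{-2}]_3)/3=0\notin\mathbb{N}$.

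Concretely, then: your necessity argument (via $n=1$) and your $\kappa_0=+1$ sufficiency argument are fine, but your $\kappa_0=-1$ sufficiency argument, executed as written (minimizing ``at the extreme case $n=1$'' and treating the displayed inequality as exactly the positivity of the infimum), proves positivity only when $\alpha\geq\beta$. To complete it you must split on the sign of $\alpha-\beta$, identify $[n]_d=d-1$ as the minimizing residue when $\alpha<\beta$, and invoke $\alpha>d$ and $\beta>0$ to conclude $\frac{\alpha(d-1)+\beta}{d}>0$ (hence $\geq 1$, since integrality has already been established). With that repair your proof coincides with the paper's.
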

\begin{proof}
	Let   $n\in\mathbb{N}$ and  consider its Euclidean division by $d$ of the form $n=q_nd+[n]_d$.  If  the remainder  $[n]_d=0$, then $T(n)= q_n\in \mathbb{N}$.  Else,  $[n]_d\not=0$ and 
	$$T(n)=(\alpha n+\beta [ \kappa_0n]_d)/d=
	\left\{\begin{array}{ccc}
		\alpha q_n+\dfrac{\alpha+\beta}{d}[n]_d&\mathrm{if}& \kappa_0= +1\\
		\alpha q_n+\beta+\dfrac{\alpha-\beta}{d}[n]_d&\mathrm{if}&  \kappa_0= -1.\\
	\end{array}\right.$$
	For $\kappa_0=+1$,  if the condition
	\eqref{CdtCollatztriplet}  is satisfied then 
	$\alpha+\beta>0$ and  $\modd{\alpha+\beta}{0}{d}$, it follows that
	for all $n\in\mathbb{N}$, we have $T(n)\in\mathbb{N}$.\\ Now,  for  $\kappa_0=-1$,  if the condition
	\eqref{CdtCollatztriplet}  is satisfied then 
	$(\alpha-\beta)+\beta d>0$ and  $\modd{\alpha-\beta}{0}{d}$. Then  for all $n\in\mathbb{N}$, we have $T(n)\in\mathbb{Z}$.
	If $\alpha-\beta>0$, as  $[n]_d\geq 1$ then $T(n)\geq \beta+\dfrac{\alpha-\beta}{d}[n]_d\geq \beta+\dfrac{(\alpha-\beta)}{d}=\dfrac{\beta d+(\alpha-\beta)}{d}> 0$. If $\alpha-\beta<0$, as  $[n]_d\leq (d-1)$ then $T(n)\geq \beta+\dfrac{\alpha-\beta}{d}[n]_d\geq \beta+\dfrac{(\alpha-\beta)}{d}(d-1)=\dfrac{\alpha(d-1)+ \beta}{d}$. In this case, we have $\beta >\alpha\geq d\geq 2$. This shows  that also for $\kappa_0=-1$, we have
	for all $n\in\mathbb{N}$, $T(n)\in\mathbb{N}$.
 Conversely, if $T:\mathbb{N}\longrightarrow \mathbb{N} $ is well defined, then  $T(1)\in \mathbb{N}$ and 
 we have
	$$T(1)=
	\left\{\begin{array}{ccc}
		\dfrac{\alpha+\beta}{d}&\mathrm{if}& \kappa_0= +1\\
		\dfrac{\alpha +\beta(d-1)}{d}&\mathrm{if}&  \kappa_0= -1,\\
	\end{array}\right.$$ 
	it follows 
	obviously that $\modd{\alpha+\kappa_0\beta}{0}{d}$ for both cases and we have $\alpha+\beta>0$
	for
	$\kappa_0=1$ and 
	$\alpha-\beta>-\beta d$ for $\kappa_0=-1$. Thus, the condition \eqref{CdtCollatztriplet} holds.
	\hfill\end{proof}

	\begin{comment}
			 We have the following definition.
		
		\begin{definition}
			Let $d \geq2$ be a positive integer number and consider a pair $(\alpha,\beta)$ of two other integers with $\alpha>d$  and  $\beta$ may be negative. The integers  $d$, $\alpha$ and $\beta$  are  such that   $\notmodd{\alpha}{0}{d}$ and $\notmodd{\beta}{0}{d}$.   We will say that the triplet $(d,\alpha,\beta)_{\pmb{\pm}}$   is an admissible triplet of integers 
			if and only if the integers $d$, $\alpha$, $\beta$ and $\kappa_0=\pm 1$
			satisfy the condition  \eqref{CdtCollatztriplet}. 
		\end{definition}
		
	\end{comment}

	As an example, the triplet   $(2,3,1)_{\pmb{+}}$  corresponding to the  classical   Collatz problem satisfies the condition \eqref{CdtCollatztriplet}. 
	The triplet $(3, 5,2)_{\pmb{-}}$  satifies \eqref{CdtCollatztriplet} but $(3,5,2)_{\pmb{+}}$ does  not. The associated  mapping  $T:\mathbb{N}\longrightarrow \mathbb{N} $  to $(3,5,2)_{\pmb{+}}$ given  by
	$$
	T(n)=\left\{\begin{array}{ccc}
		n/3&\mathrm{if}& \modd{n}{0}{3}\\
		(5 n+2 [ n]_3)/3&\mathrm{if}& \notmodd{n}{0}{3},\\
	\end{array}\right.
	$$
	is not well defined on $\mathbb{N}$. Indeed,  for instance $T(1)=\dfrac{7}{3}\not\in\mathbb{N}$.  The triplet $(5,7,-3)_{\pmb{-}}$ does not also satisfy the condition \eqref{CdtCollatztriplet} since  the condition  $\alpha+\kappa_0\beta>\dfrac{(\kappa_0-1)}{2}\beta\,d$  does not hold and for the associated $T$, we have for instance $T(1)=-1 \not\in\mathbb{N}$.

\begin{remark}
	In fact,  if   $(d,\alpha,\beta)_{\pmb{\pm}}$ satisfies the condition \eqref{CdtCollatztriplet}, then   the mapping  given in \eqref{mapT} is a special case of the general case given in \eqref{MatthewsMap}. Indeed, 
	\begin{itemize}
		\item for $\kappa_0=1$ and from  \eqref{CdtCollatztriplet}, we have  $\modd{\alpha+\beta}{0}{d}$, then 	if we choose in \eqref{MatthewsMap}, $ m_0= 1$, $r_0=0$ and  for $i=1,\ldots,d-1$, we set
		$m_i=\alpha$ and  $r_i=-i\beta$,  it follows that $\modd{r_i}{i\alpha}{d}$ which gives the conditions $\modd{r_i}{im_i}{d}$, for $i=1,\ldots,d-1$.
		\item for $\kappa_0=-1$ and from  \eqref{CdtCollatztriplet}, we have  $\modd{\alpha-\beta}{0}{d}$, then 	if we choose in \eqref{MatthewsMap}, $ m_0= 1$, $r_0=0$ and  for $i=1,\ldots,d-1$, we set
		$m_i=\alpha$ and  $r_i=-\beta(d-i)$,  it follows that $\modd{r_i}{-(d-i)m_i}{d}$ which gives the conditions $\modd{r_i}{im_i}{d}$, for $i=1,\ldots,d-1$.
	\end{itemize}
	Although the mapping defined by \eqref{mapT} fits within the general framework of Matthews and Watts \cite{MatthewsWatts1984,MatthewsWatts1985}, our formulation offers a simpler and more explicit representation that supports concrete computations and a systematic study of Collatz-type dynamics. It enables the direct construction of trivial cycles, the derivation of explicit iteration formulas, and the efficient computation of total stopping times. This perspective also facilitates the identification of weakly and strongly admissible triplets, leading to conjectures extending the classical Collatz problem—Contributions that are not directly addressed in the original Matthews–Watts framework. The notions of weakly ans strongly admissible triplet will be introduced in the next definition.
\end{remark}

Let us now recall some classical definitions. Given $n\in \mathbb{N}$, the  trajectory   of $n$ is the set $\Gamma(n)$ of the successive iterates starting from $n$: $\Gamma(n)=\bigl
\{n,T(n),T^{(2)}(n),\ldots\bigr\}$.  A cycle (if it exists),  having $k$ elements and  associated to the mapping  $T$ (or to the triplet $(d,\alpha,\beta)_{\pmb{\pm}}$)  is a finite set  $\Omega $  for which $T^{(k)}(x)=x$ for all $x\in \Omega$.  The cardinality $\#\Omega=Card(\Omega)=k$, also called the length of  the cycle $\Omega$,  is  the smallest integer  $k$ such that $T^{(k)}(x)=x$ for all $x\in \Omega$.  We will denote by  
$\Omega(\omega)=\{\omega,T(\omega),T^{(2)}(\omega),\ldots, T^{(k-1)}(\omega)\}$
the  cycle (if it exists) associated to $T$  of length  $k$
where $\omega$ is the smallest element in  the cycle. 
We will also use this abusive following  notation 
$$(\omega \rightarrow T(\omega)\rightarrow T^{(2)}(\omega)\rightarrow \ldots \rightarrow T^{(k-1)}(\omega)\rightarrow \omega),$$
to denote  the  cycle $\Omega(\omega)$. 
For each choice of the  triplet   $(d,\alpha,\beta)_{\pmb{\pm}}$,   the notation $\mathrm{Cycl}(d,\alpha,\beta)_{\pmb{\pm}}$ stands for the set of all possible cycles associated to $T$, it may be empty, finite or infinite set, then   the cardinality
$\#\mathrm{Cycl} (d,\alpha,\beta)_{\pmb{\pm}}\in
\mathbb{N}_0 \cup\{\infty\}$, 
will be called the order of the triplet  $(d,\alpha,\beta)_{\pmb{\pm}}$ and will be denoted by
$\# \mathrm{Cycl}(d,\alpha,\beta)_{\pmb{\pm}}:=\#\mathrm{Cycl} (d,\alpha,\beta)_{\pmb{\pm}}$.
For $n\in\mathbb{N}$,  there are two possible situations for a trajectory $\Gamma(n)=\{T^{(k)}(n)\}_{k\in\mathbb{N}_0}$ starting with $n$:
\begin{enumerate}
	\item[(i)]  Convergent trajectory to  either  cycle, assuming that   such  cycles exist.
	\item[(ii)] Divergent trajectory: $\displaystyle \lim_{k\rightarrow \infty}T^{(k)}(n)=\infty$.
\end{enumerate}
Let a cycle $\Omega(\omega)$ exist, we denote by $G_\omega(d,\alpha,\beta)_{\pmb{\pm}}$ and $G_\infty(d,\alpha,\beta)_{\pmb{\pm}}$ the subsets of $\mathbb{N}$  given by
$$G_\omega(d,\alpha,\beta)_{\pmb{\pm}}=\{n\in\mathbb{N}:\, \exists k\in\mathbb{N},\,T^{(k)}(n) \in \Omega(\omega)\}$$
and  $$G_\infty(d,\alpha,\beta)_{\pmb{\pm}}=\{n\in\mathbb{N}:\, \lim_{k\rightarrow \infty} T^{(k)}(n)=\infty\}.$$
The set $G_\infty(d,\alpha,\beta)_{\pmb{\pm}}$ may be empty or not and the sets  $G_\omega(d,\alpha,\beta)_{\pmb{\pm}}$ are pairwise  disjoint: 
For all $\omega,\omega'\in\mathbb{N}$ such that the  cycles $\Omega(\omega)$ and  $\Omega(\omega')$ exist, if  $\omega\not= \omega'$, then  $ G_\omega(d,\alpha,\beta)_{\pmb{\pm}}\cap G_{\omega'}(d,\alpha,\beta)_{\pmb{\pm}} =\emptyset$ and 
$G_\infty(d,\alpha,\beta)_{\pmb{\pm}}
\cap  G_\omega(d,\alpha,\beta)_{\pmb{\pm}}=\emptyset$.  

The  condition  \eqref{CdtCollatztriplet}  does not ensure  the existence of at least   one cycle.  For instance   the  triplets	$(2,9,1)_{\pmb{+}}$, 
$(5,31,4)_{\pmb{+}}$ 
and $(15,35,10)_{\pmb{+}}$ 
although they satisfy the condition  \eqref{CdtCollatztriplet}, it appears that they do not have any cycle. 
We have   the following definition.

\begin{definition}
	Let $d \geq2$ be a positive integer and consider a pair $(\alpha,\beta)$ of two other integers with $\alpha>d$  and  $\beta$ may be negative. The integers  $d$, $\alpha$ and $\beta$  are  such that   $\notmodd{\alpha}{0}{d}$ and $\notmodd{\beta}{0}{d}$ and   its associated  mapping   $T$ given by  \eqref{mapT}.  We will say that:
	\begin{itemize}
		\item 
			The triplet $(d,\alpha,\beta)_{\pmb{\pm}}$   is an admissible triplet of integers 
			if and only if  it
			satisfies the condition  \eqref{CdtCollatztriplet} and  $(d,\alpha,\beta)_{\pmb{\pm}}$ has at least one cycle. Thus $1\leq \# \mathrm{Cycl}(d,\alpha,\beta)_{\pmb{\pm}}$.

		\item    The triplet $(d,\alpha,\beta)_{\pmb{\pm}}$  is a weakly  admissible   triplet  if and only if  $1\leq \# \mathrm{Cycl}(d,\alpha,\beta)_{\pmb{\pm}}<\infty$. Namely, $(d,\alpha,\beta)_{\pmb{\pm}}$  is admissible and
 possesses a finite  number of cycles. 
		\item  The triplet $(d,\alpha,\beta)_{\pmb{\pm}}$  is a  strongly  admissible triplet  if and only if $(d,\alpha,\beta)_{\pmb{\pm}}$  is a weakly  admissible   triplet and 
		 $G_\infty(d,\alpha,\beta)_{\pmb{\pm}}=\emptyset$. Namely, $(d,\alpha,\beta)_{\pmb{\pm}} $
		  possesses a finite, non-zero number of cycles without any divergent trajectory.
	\end{itemize} 
\end{definition}

According to the previous definition, it follows that:
\begin{itemize}
	\item If $(d,\alpha,\beta)_{\pmb{\pm}}$  is a
weakly admissible  triplet, then $(d,\alpha,\beta)_{\pmb{\pm}} $
	has  a   finite  number $q=\# \mathrm{Cycl}(d,\alpha,\beta)_{\pmb{\pm}} $ of cycles denoted as  $\Omega(\omega_1),\ldots, \Omega(\omega_{q})$ with  $1\leq q<+\infty$ and  $\mathrm{Cycl}(d,\alpha,\beta)_{\pmb{\pm}}=\{\Omega(\omega_1),\ldots, \Omega(\omega_{q})
	\}\not=\emptyset$. Then, the set $G_\infty(d,\alpha,\beta)_{\pmb{\pm}}$ may be empty set  or not ($(d,\alpha,\beta)_{\pmb{\pm}}$ may have  divergent trajectories). So, we have
	$$ \mathbb{N}=
	G_{\omega_1}(d,\alpha,\beta)_{\pmb{\pm}}
	\cup \ldots \cup G_{\omega_q}(d,\alpha,\beta)_{\pmb{\pm}}\cup G_\infty(d,\alpha,\beta)_{\pmb{\pm}}.$$
	In the case where $G_\infty(d,\alpha,\beta)_{\pmb{\pm}}$ is not empty, the subsets
	$G_{\omega_1}(d,\alpha,\beta)_{\pmb{\pm}}$,\ldots, $G_{\omega_q}(d,\alpha,\beta)_{\pmb{\pm}}$  and $G_\infty(d,\alpha,\beta)_{\pmb{\pm}}$  form a partition of $\mathbb{N}$.  Then,
	$\forall n\in\mathbb{N}$, $\exists k\in\mathbb{N}_0$ such that 
	$T^{(k)}(n)\in \{\omega_1,\ldots, \omega_{q},\infty\}$.

	\item If  $(d,\alpha,\beta)_{\pmb{\pm}}$  is a
	strongly admissible  triplet,  then 	 
$(d,\alpha,\beta)_{\pmb{\pm}}$
	has  a   finite  number $q=\# \mathrm{Cycl}(d,\alpha,\beta)_{\pmb{\pm}} $ of cycles denoted as  $\Omega(\omega_1),\ldots, \Omega(\omega_{q})$ with  $1\leq q<+\infty$ and  $\mathrm{Cycl}(d,\alpha,\beta)_{\pmb{\pm}}=\{\Omega(\omega_1),\ldots, \Omega(\omega_{q})
	\}\not=\emptyset$.
	In this case, the set $G_\infty(d,\alpha,\beta)$  is  an empty set and there is no divergent trajectory. 	Then, the subsets 	$G_{\omega_1}(d,\alpha,\beta),\ldots, G_{\omega_q}(d,\alpha,\beta)$   form a partition of $\mathbb{N}$:
	$$ \mathbb{N}=
	G_{\omega_1}(d,\alpha,\beta)
	\cup \ldots \cup G_{\omega_q}(d,\alpha,\beta),$$
and
	$\forall n\in\mathbb{N}$, $\exists k\in\mathbb{N}_0$ such that 
	$T^{(k)}(n)\in \{\omega_1,\ldots, \omega_{q}\}$.
\end{itemize}

The classical Collatz conjecture may  now be reformulated as following,
\begin{conjecture}\label{CollatzConjecture}
	The   triplet  $(2,3,1)_{\pmb{+}}$ is a  strongly  admissible   triplet of order $\# \mathrm{Cycl}(2,3,1)_{\pmb{+}}=1 $ .  Its unique cycle is the trivial one
	$\Omega(1)=(1\rightarrow 2\rightarrow 1)$ of length $2$.  For all $n\geq 1$, there exists an integer $k\geq 0$ such that $C^{(k)}(n)=1$, where the mapping $C$ is given by \eqref{CollatzMap}.
\end{conjecture}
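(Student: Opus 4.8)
The plan is to separate the statement into its elementary and its deep components. First I would verify the two routine parts. Condition \eqref{CdtCollatztriplet} for $(2,3,1)_{\pmb{+}}$ is immediate: with $\kappa_0=+1$ it reduces to $\alpha+\beta>0$ and $\modd{\alpha+\beta}{0}{d}$, i.e. $3+1=4>0$ and $4\equiv 0\ (\mathrm{mod}\ 2)$. Likewise $\Omega(1)=(1\to 2\to 1)$ is a cycle of length exactly $2$: one computes $C(1)=(3\cdot 1+1)/2=2$ and $C(2)=2/2=1$, and $C(1)\neq 1$ forces the length to be $2$. The substance of the statement is therefore that $(2,3,1)_{\pmb{+}}$ is \emph{strongly} admissible with a \emph{unique} cycle, which splits into (a) no non-trivial cycle exists, so $\#\mathrm{Cycl}(2,3,1)_{\pmb{+}}=1$, and (b) $G_\infty(2,3,1)_{\pmb{+}}=\emptyset$, i.e. no trajectory diverges. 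Granting (a) and (b), the last clause follows purely from the definition of strong admissibility: every trajectory is eventually periodic and the only available cycle is $\Omega(1)$, so $C^{(k)}(n)=1$ for some $k\geq 0$.

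For part (a) the natural route is to bound the length of any hypothetical non-trivial cycle from below and attempt to push the bound to a contradiction. Encoding a putative cycle by its pattern of odd and even steps yields a relation of the form $\omega\,(2^{a}-3^{b})=R$, where $b$ counts the odd steps, $a$ the even ones, and $R$ is a positive integer combination of powers of $2$ and $3$; bounding $|2^{a}-3^{b}|$ from below through Baker-type estimates on linear forms in logarithms (the approach used by Steiner, and by Simons--de Weger for $m$-cycles) forces the smallest element $\omega$ of the cycle beyond the range already checked by computation. For part (b) the heuristic is that an odd step multiplies by $3/2$ and is on average followed by enough halvings to give an expected growth factor of $3/4<1$ per step; rigorous versions of this are only available in density form — the stopping-time density results of Terras and Everett, the density lower bounds of Krasikov--Lagarias for integers reaching $1$, and Tao's theorem that almost all orbits attain almost bounded values.

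The main obstacle is exactly the one that has kept this problem open for decades: there is no known mechanism that upgrades the "expected contraction" heuristic, or the density statements, into an assertion about \emph{every} orbit, and no effective argument that excludes \emph{all} non-trivial cycles rather than those below a computational threshold. I therefore do not expect to complete this proof — the honest status of the statement is that parts (a) and (b), and hence the displayed conclusion, remain conjectural. Its role in this paper is to recast the classical problem in the admissible-triplet language, so that the analogous questions for other triplets can be posed and studied in parallel; genuine progress would presumably require either a new invariant obstructing divergence or a sharpening of linear-forms-in-logarithms bounds strong enough to close the cycle gap unconditionally, both beyond current techniques.
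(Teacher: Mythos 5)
The statement you were given is presented in the paper as Conjecture~\ref{CollatzConjecture} — a reformulation of the classical Collatz conjecture in the admissible-triplet language — and the paper supplies no proof of it; your proposal correctly verifies the only provable components (that $(2,3,1)_{\pmb{+}}$ satisfies condition \eqref{CdtCollatztriplet} and that $\Omega(1)=(1\rightarrow 2\rightarrow 1)$ is a cycle of length $2$) and correctly identifies the remainder as the open problem itself. Your honest assessment that the no-nontrivial-cycle and no-divergent-trajectory claims cannot be completed with current techniques matches exactly the status the paper assigns to this statement, so there is nothing to compare against: both you and the author leave the substance unproven.
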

Our conjecture~\ref{Myconjecture}  presented in the introduction 
may be also  reformulated as following:
\begin{conjecture}\label{MyConjecture_2}
	The   triplet  $(10,12,8)_{\pmb{+}}$ is a strongly admissible   triplet of order $\# \mathrm{Cycl}(10,12,8)_{\pmb{+}}=1$.  Its unique cycle is the trivial one
	$\Omega(4)=(4\rightarrow 8\rightarrow 16\rightarrow 24\rightarrow32\rightarrow40\rightarrow4)$ of length $6$. For all $n\geq 1$, there exists an integer $k\geq 0$ such that $T^{(k)}(n)=4$, where the mapping $T$ is given by \eqref{mapT0}.
\end{conjecture}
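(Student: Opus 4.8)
The assertion combines three claims of very different depth, and I would treat them separately.

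\emph{The routine part.} First, $(10,12,8)_{\pmb{+}}$ has $\kappa_0=+1$, so condition \eqref{CdtCollatztriplet} reduces to $\alpha+\beta>0$ and $\modd{\alpha+\beta}{0}{d}$; here $\alpha+\beta=20>0$ and $\modd{20}{0}{10}$, so by the Theorem above the map $T$ of \eqref{mapT0} is well defined on $\mathbb{N}$. Writing $n=10q_n+r$ with $r=[n]_{10}$ gives the convenient reduced form $T(n)=q_n$ when $r=0$ and $T(n)=12q_n+2r$ when $1\le r\le 9$; in particular $T(n)\ge 2$ whenever $\notmodd{n}{0}{10}$, and a direct computation along $4\to 8\to 16\to 24\to 32\to 40$ with $T(40)=4$ shows that $\Omega(4)$ is a cycle whose six elements are pairwise distinct and whose least element is $4$. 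Hence $(10,12,8)_{\pmb{+}}$ is admissible, $\#\mathrm{Cycl}(10,12,8)_{\pmb{+}}\ge 1$, and the cycle has length exactly $6$.

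\emph{No other cycle.} To obtain $\#\mathrm{Cycl}(10,12,8)_{\pmb{+}}=1$ one must rule out every cycle $\Omega(\omega)$ with $\omega\neq 4$. The plan is to expand the closing relation $T^{(k)}(\omega)=\omega$ for a hypothetical cycle of length $k$ in which $j$ steps use the branch $n\mapsto n/10$ and the remaining $k-j$ use the affine branch, which in the reduced form is $n\mapsto (6/5)n+(4/5)[n]_{10}$. Composing these two affine-in-$n$ maps yields a linear equation $\bigl((6/5)^{\,k-j}(1/10)^{\,j}-1\bigr)\,\omega=-(\text{integer combination of the }[\,\cdot\,]_{10}\text{ remainders})$; since each remainder is at most $9$, this bounds $\omega$ from above by an explicit function of $k$ and $j$, and for small $k$ it already forces $\omega$ into a short range to be checked. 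For larger $k$ one would combine this with the exponential Diophantine constraint on the ratio $(k-j)/j\approx \ln 10/\ln(6/5)$ — a rational-approximation condition of the type used in Section~4 — to push up the minimal possible cycle length. The gap is that, exactly as for the classical $3n+1$ map, there is no a priori \emph{upper} bound on $k$, so this reduces the claim only to an unbounded family of finite verifications, carried out in practice by the algorithms of Section~4.

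\emph{No divergent trajectory, and conclusion.} Strong admissibility requires in addition $G_\infty(10,12,8)_{\pmb{+}}=\emptyset$; granting this together with the preceding point, the partition $\mathbb{N}=G_{\omega_1}(10,12,8)_{\pmb{+}}\cup\cdots\cup G_{\omega_q}(10,12,8)_{\pmb{+}}$ for weakly admissible triplets collapses to $\mathbb{N}=G_{4}(10,12,8)_{\pmb{+}}$, so every orbit enters $\Omega(4)$ and hence $T^{(k)}(n)=4$ for some $k$. The heuristic supporting the absence of divergence is favourable: an affine step scales $n$ by roughly $6/5$ and a division step by $1/10$, so if the remainders $[T^{(i)}(n)]_{10}$ were equidistributed the expected multiplicative drift per step would be $(6/5)^{9/10}(1/10)^{1/10}<1$, and a stopping-time/density argument in the classical style should then show that the set of $n$ whose orbit eventually drops below any fixed bound has density $1$. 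Upgrading ``density $1$'' to ``all'', and genuinely excluding runaway trajectories, is precisely the obstruction that keeps the original Collatz conjecture open; I do not expect to remove it here. This is the main obstacle, and it is the reason the statement is offered as a conjecture, supported by the structural results of Sections~2--4 and the computational evidence of the last section.
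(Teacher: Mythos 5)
This statement is a conjecture, and the paper offers no proof of it --- only the reformulation of Conjecture~\ref{Myconjecture} plus the computational verification reported in Section~\ref{ExperimentalSection} (all trajectories checked up to about $1.82\times 2^{38}$). Your treatment matches the paper's stance: the routine part (condition \eqref{CdtCollatztriplet} holds, the reduced form $T(n)=12q_n+2[n]_{10}$ is correct, and $\Omega(4)$ is a genuine cycle of length $6$) is verified exactly as one would, and you correctly isolate the two genuinely open components --- uniqueness of the cycle and emptiness of $G_\infty(10,12,8)_{\pmb{+}}$ --- as the same obstructions that keep the classical Collatz conjecture open, which is precisely why the paper states this as a conjecture rather than a theorem.
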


Determining whether a given triplet is admissible is not always straightforward. For instance, the triplets $(2,9,1)_{\pmb{+}}$,$(3,10,8)_{\pmb{+}}$, $(3,52,11)_{\pmb{+}}$, $(5,31,4)_{\pmb{+}}$, $(5,46,6)_{\pmb{-}}$ and $(15,35,10)_{\pmb{+}}$ appear to be non-admissible, although they satisfy condition  \eqref{CdtCollatztriplet}. For a given admissible triplet of integers, a major difficulty consists in determining whether it is weakly admissible, strongly admissible, or neither of the two. This question remains an open problem even in the classical case of the triplet  $(2,3,1)_{\pmb{+}}$. In this work, we formulate a general conjecture associated with specific families of admissible triplets, aiming to provide a broader framework for understanding the structural behavior of Collatz-type mappings. Let us now give  few examples of  admissible triplets $(d,\alpha,\beta)_{\pmb{\pm}}$. The assertions presented below are based on analytical observations, empirical evidence, and extensive computational experiments. The rationale behind these statements will be discussed in the next section; for the moment, the reader is invited to accept the following examples as illustrative. In the next section, we provide a detailed discussion of the underlying motivations and theoretical considerations supporting these statements. We also present several examples of admissible triples, together with their associated cycles.

\begin{examples}\label{Example201}
	
	\begin{enumerate}
		\item\label{Item-1.}  For a first example,  we consider the
	 triplet 
	$\bigl(33445533,33445534,33445532\bigr)_{\pmb{+}}$ with
		a large number $d =33445533$.
	 Its corresponding
		mapping  $T:\mathbb{N}\longrightarrow \mathbb{N} $ is given  by
		\begin{equation}\label{mapT_Exp1}
			T(n)=\left\{\begin{array}{ccc}
				n/33554433&\mathrm{if}& \modd{n}{0}{33554433}\\ 
				\Bigl(33554434 n+33554432 [ n]_d\Bigr)/33554433&\mathrm{if}& \notmodd{n}{0}{33554433}.\\
			\end{array}\right.
		\end{equation}
		It seams  that
		$\bigl(33445533,33445534,33445532\bigr)_{\pmb{+}}$
		is a strongly  admissible  triplet  of order one, $\# \mathrm{Cycl}\bigl(33445533,33445534,33445532\bigr)_{\pmb{+}}=1$. Its  trivial cycle of length $33554433$ is 
		$$
		\begin{array}{ll}
			\Omega(33554432)=
			(33554432 \rightarrow 67108864\rightarrow 100663296\rightarrow 134217728 \rightarrow 167772160
			\rightarrow 
			201326592 \rightarrow &\\
			234881024\rightarrow 
			268435456\rightarrow 
			301989888\rightarrow 
			335544320\rightarrow 
			369098752\rightarrow 
			402653184\rightarrow 
			\ldots\ldots\ldots\ldots &\\
			\ldots\ldots\rightarrow 1125899571298304\rightarrow 
			1125899604852736\rightarrow 
			1125899638407168\rightarrow 
			1125899671961600\rightarrow &\\
			1125899705516032\rightarrow 
			1125899739070464\rightarrow 
			1125899772624896\rightarrow 
			1125899806179328\rightarrow&\\
			1125899839733760\rightarrow
			1125899873288192\rightarrow
			1125899906842624\rightarrow 1125899940397056 \rightarrow 33554432).&
		\end{array}
		$$
		If  the fact that this triplet is a strong admissible triplet  with order one is true, 
		
			\item\label{Item-2}  In this example,
		we consider the  triplet $(5,6,373769)_{\pmb{+}}$, its corresponding 
		mapping  $T:\mathbb{N}\longrightarrow \mathbb{N} $ is given  by
		\begin{equation}\label{mapT_Exp5}
			T(n)=\left\{\begin{array}{ccc}
				n/5&\mathrm{if}& \modd{n}{0}{5}\\ 
				\Bigl(6 n+373769 [ n]_5\Bigr)/5&\mathrm{if}& \notmodd{n}{0}{5}.\\
			\end{array}\right.
		\end{equation}
the triplet  $(5,6,373769)_{\pmb{+}}$ has at least
$33$ cycles all of  same length $5$ which are:\\
$
\begin{array}{ll}
	\Omega(1331), 	\Omega(1936), 	\Omega(2057), 	\Omega(2541), 	\Omega(2662), 	\Omega(2783), 	\Omega(3146),  	\Omega(3267), 	\Omega(3388), 	\Omega(3509),	\Omega(3751), 	&\\
	\Omega(3872), 	\Omega(3993), 	\Omega(4114), 	\Omega(4477), 	\Omega(4598), 	\Omega(4719), 	\Omega(5203), 	\Omega(5324), 	\Omega(5929), 	\Omega(6776), 	\Omega(7502), 	&\\
	\Omega(8228), 	\Omega(8954), 	\Omega(9801), 	\Omega(10527), 	\Omega(11253), 	\Omega(11979), 	\Omega(12826), 	\Omega(13552), 	\Omega(14278), 	\Omega(15004), &\\	\Omega(373769).&\\
\end{array}
$
		
\item\label{Item-3}
		In this example, we consider the triplet $(4,10,54)_+$. The  corresponding 
		mapping  $T:\mathbb{N}\longrightarrow \mathbb{N} $ is given  by
		\begin{equation}\label{mapT_Exp(4-10-54)}
			T(n)=\left\{\begin{array}{ccc}
				n/4&\mathrm{if}& \modd{n}{0}{4}\\ 
				\Bigl(10 n+54 [ n]_4\Bigr)/4=	\Bigl(5 n+27 [ n]_4\Bigr)/2&\mathrm{if}& \notmodd{n}{0}{4},\\
			\end{array}\right.
\end{equation}			
It seems that  $(4,10,54)_+$ is a strongly admissible triplet of order  $\# \mathrm{Cycl}(4,10,54)_+=  37$. Its trivial $37$ cycles are stored in increasing length as follows:
		
\begin{tabular}{llll}
- Cycles of length $2$ :&  $\Omega(9)$,   $\Omega(18)$,  $\Omega(27)$&
- Cycle of length $20$ :& $\Omega(189)$,\\
-  Cycles of length $3$ : &$\Omega(1)$,  $\Omega(2)$, $\Omega(3)$,&
- Cycle of length $25$ : &	$\Omega(342)$,\\
- Cycles of length $5$ : &
$\Omega(477)$, $\Omega(549)$, $\Omega(693)$, &
- Cycles of length $27$ : &	$\Omega(78)$,  $\Omega(93)$,\\
&$\Omega(702)$,
$\Omega(774)$,
$\Omega(837)$,&
- Cycles of length $30$ : &	$\Omega(198)$,  $\Omega(237)$,\\		
&$\Omega(918)$,
$\Omega(927)$,
$\Omega(999)$,
 &
- Cycle of length $36$ : &
$\Omega(13)$,\\
&
$\Omega(1062)$,
$\Omega(1143)$,
$\Omega(1287)$,	&- Cycles of length $98$ : &	$\Omega(5967)$,\\
- Cycle of length $6$ : &	$\Omega(6)$,&
- Cycle of length $108$ : & $\Omega(1518)$,\\
- Cycle of length $10$ :&  $\Omega(639)$,&
- Cycle of length $246$ : & $\Omega(214)$,\\
- Cycles of length $15$ : & $\Omega(7)$, $\Omega(678)$,&\\ 
- Cycle of length $583$ : & $\Omega(25983)$, $\Omega(31662)$, $\Omega(33534)$,&
- Cycle of length $681$ : & $\Omega(4174)$, $\Omega(14927)$.\\ 
\end{tabular}

\item\label{Item-3}
In this example, we consider the triplet $(3,4,1)_-$ its  corresponding 
mapping  $T:\mathbb{N}\longrightarrow \mathbb{N} $ is given  by
\begin{equation}\label{mapT_Exp(3-4-1)_}
	T(n)=\left\{\begin{array}{ccc}
		n/3&\mathrm{if}& \modd{n}{0}{3}\\ 
		\Bigl(4 n+ [ -n]_3\Bigr)/3&\mathrm{if}& \notmodd{n}{0}{3},\\
	\end{array}\right.
\end{equation}			
It seems that $(3,4,1)_-$ is a strong admissible triplet with
$\# \mathrm{Cycl}(3,4,1)_-= 2$ its trivial cycles are:
$\Omega(1)=(1\rightarrow 2\rightarrow3\rightarrow1)$ of length $2$   and 
$\Omega(7)=(7\rightarrow 10\rightarrow 14\rightarrow 19\rightarrow 26\rightarrow 35\rightarrow 47\rightarrow 63\rightarrow 21\rightarrow 7)$ of length $9$.	

\item\label{item-3A}  In this example,
we consider the triplet  $(5,6,4)_{\pmb{+}}$, corresponding to 
$d = 5$, $\alpha =  6$, $\beta = 4$ and $\kappa_0=1$. Its corresponding 
mapping  $T:\mathbb{N}\longrightarrow \mathbb{N} $ is given  by
\begin{equation}\label{mapT_Exp5}
	T(n)=\left\{\begin{array}{ccc}
		n/5&\mathrm{if}& \modd{n}{0}{5}\\ 
		\Bigl(6 n+4[ n]_5\Bigr)/5&\mathrm{if}& \notmodd{n}{0}{5}.\\
	\end{array}\right.
\end{equation}
It seems  that  the 
triplet  $(5,6,4)_{\pmb{+}}$
is a strongly admissible  triplet of order one.  Its  trivial  cycle is
$\Omega(4)=(4\rightarrow 8\rightarrow 12 \rightarrow 16\rightarrow 20\rightarrow 4)$ of length $5$. 
\end{enumerate}
\end{examples}

\section{Special classes of  admissible  triplets and conjectures}

In this section, we will give  some  classes of admissible triplets $(d,\alpha,\beta)_{\pmb{\pm}}$.  
The particular  case for  $d=2$,   has been studied, by the author,   see \cite{Bouhamidi-Syracuse2021} for more details.  We will also formulate some general conjectures.

%------------------------------------------------
\subsection{The   admissible triplets  $(d, \alpha,\beta)_{\pmb{\pm}}$ with $\beta =\kappa_0(d^{\nu_0}-\alpha)$,
	$ \alpha=d^{\nu_1}-\kappa_1\delta$,  $ 1\leq \delta\leq d-1$, $\nu_0,\nu_1\geq 1$ and  $\kappa_0, \kappa_1=\pm 1$. }\label{subsubsection:case1}

We have the following theorem.
\begin{theorem}\label{trivialcycleTheoremA_OLD}
		Let  $d\geq 2$, let $\nu_0, \nu_1\geq 1$, $\delta\in\{1,\ldots,d-1\}$ and  $\kappa_0,\kappa_1\in\{\pm 1\}$. Define 
	$\alpha =d^{\nu_1}-\kappa_1\delta $ and 
	$\beta = \kappa_0(d^{\nu_0}-\alpha)$.  Then the triplet $(d,\alpha,\beta)_{\pmb{\pm}}$ is admissible in each the following cases. 
	\begin{enumerate}
		\item  {\bf The case }  $\kappa_0=+1$. In this case,
		$(d,\alpha,\beta)_{\pmb{+}}$ admits at least
		$d-1$   trivial   cycles of length $\nu_0$ given, for each
		$r=1,\ldots, d-1$,  by:
		\begin{equation}\label{TrivialCycles1}
			\Omega(r)=	( r\rightarrow rd^{\nu_0-1}\rightarrow r d^{\nu_0-2}\rightarrow\ldots\rightarrow rd \rightarrow  r). 
		\end{equation}
		
		\item {\bf The case}   $\kappa_1\beta>0$. In this case, the triplet  $(d,\alpha,\beta)_{\pmb{\pm}}$ admits trivial cycles of length $\nu_1$, described as follows
		\begin{enumerate}
			\item[2-1.]  If $\delta=1$,  then   the  $d-1$  trivial cycles, for each $r=1,\ldots, d-1$, are
			\begin{equation}\label{TrivialCycles2}
				\Omega(r|\beta|)=	( r |\beta|\rightarrow r |\beta| d^{\nu_1-1}\rightarrow r|\beta|  d^{\nu_1-2}\rightarrow\ldots\rightarrow r|\beta| d\rightarrow  r |\beta|)
			\end{equation}
			Moreover, if $\kappa_0=+1$ and $\nu_0=\nu_1$,  these  cycles  in  \eqref{TrivialCycles2} coincide with those in \eqref{TrivialCycles1}.
			\item[2-2.] If $1< \delta\leq d-1$,  $\nu_0$, $\nu_1\geq 2$ and  $\nu_0\not=\nu_1$, let $q_0=gcd(d^{\nu_0-1}-d^{\nu_1-1},\delta)$.  Then there exist two integers $\beta_0$ and $\delta_0$ such that   $\beta=q_0\beta_0$, $\delta=q_0\delta_0$  and  the triplet
			$(d,\alpha,\beta)_{\pmb{\pm}}$ admits  the  trivial cycles $\Omega(r|\beta_0|)$ of length $\nu_1$,  described as:
			\begin{equation}\label{TrivialCycles3}
				\Omega(r|\beta_0|)=	( r|\beta_0|\rightarrow r|\beta_0| d^{\nu_1-1}\rightarrow r|\beta_0|  d^{\nu_1-2}\rightarrow\ldots\rightarrow r|\beta_0| d\rightarrow  r|\beta_0|),
			\end{equation}
			for each $r=1,\ldots,\left\lfloor\dfrac{d-1}{\delta_0}\right\rfloor$, where $\lfloor\;\;\rfloor$ denotes the floor function.		
		\end{enumerate}
	\end{enumerate}
\end{theorem}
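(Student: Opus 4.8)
The plan is to reduce the entire statement to two algebraic identities plus one structural lemma, and then dispatch the three cases by direct substitution into \eqref{mapT}.

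First I would record the identities $\alpha+\kappa_0\beta=d^{\nu_0}$ (immediate from $\beta=\kappa_0(d^{\nu_0}-\alpha)$ and $\kappa_0^2=1$) and $\alpha+\kappa_1\delta=d^{\nu_1}$ (the very definition of $\alpha$). The first one settles the congruence part of \eqref{CdtCollatztriplet} at once, since $\nu_0\ge1$; the inequality part is trivial when $\kappa_0=+1$ and, when $\kappa_0=-1$, is equivalent to $\alpha>d^{\nu_0-1}(d-1)$, which I would take from the standing hypotheses making $(d,\alpha,\beta)_{\pmb{\pm}}$ a genuine triplet, so that $T$ is well defined on $\mathbb{N}$. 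The structural lemma is: \emph{if $\omega\ge1$ is not divisible by $d$ and $T(\omega)=\omega\,d^{\nu-1}$ for some $\nu\ge1$, then $\{\omega,\omega d,\dots,\omega d^{\nu-1}\}$ is a cycle of $T$ of length exactly $\nu$, realized as $\omega\to\omega d^{\nu-1}\to\omega d^{\nu-2}\to\cdots\to\omega d\to\omega$.} The proof is one line: every $\omega d^{i}$ with $i\ge1$ is a multiple of $d$, so $T$ divides it by $d$, whence $T^{(j)}(\omega)=\omega d^{\nu-j}$ for $1\le j\le\nu$ and $T^{(\nu)}(\omega)=\omega$; the $\nu$ iterates are pairwise distinct because $d\ge2$, so the minimal period is $\nu$, and $\omega$ is their least element. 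Distinct seeds (each not divisible by $d$) then give pairwise distinct cycles, two cycles being disjoint or equal. So in every case it is enough to name the seed $\omega$, check $d\nmid\omega$, and verify the single equation $T(\omega)=\omega d^{\nu-1}$.

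For Part~1 ($\kappa_0=+1$, $\nu=\nu_0$) I would take $\omega=r$ with $1\le r\le d-1$; then $[\kappa_0 r]_d=r$, so $T(r)=(\alpha r+\beta r)/d=r(\alpha+\beta)/d=r\,d^{\nu_0-1}$ by the first identity, and the lemma yields the $d-1$ cycles \eqref{TrivialCycles1}. For subcase~2-1 ($\kappa_1\beta>0$, $\delta=1$, $\nu=\nu_1$) write $\beta=\kappa_1|\beta|$ and take $\omega=r|\beta|$ with $1\le r\le d-1$; from $|\beta|=\kappa_0\kappa_1(d^{\nu_0}-\alpha)$ and $d^{\nu_1}-\alpha=\kappa_1$ one gets $\kappa_0 r|\beta|=\kappa_1 r(d^{\nu_0}-d^{\nu_1})+r\equiv r\pmod{d}$, hence $[\kappa_0 r|\beta|]_d=r$ (so $d\nmid r|\beta|$), and $T(r|\beta|)=(\alpha r|\beta|+\kappa_1|\beta|r)/d=r|\beta|(\alpha+\kappa_1)/d=r|\beta|\,d^{\nu_1-1}$, which gives \eqref{TrivialCycles2}; if moreover $\kappa_0=+1$ and $\nu_0=\nu_1$, the first identity forces $|\beta|=|d^{\nu_1}-\alpha|=1$, so $\Omega(r|\beta|)=\Omega(r)$, the stated coincidence.

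Subcase~2-2 is where I expect the real work, and the delicacy is the choice of divisor. I would rewrite $\beta=\kappa_0\bigl(d(d^{\nu_0-1}-d^{\nu_1-1})+\kappa_1\delta\bigr)$ and set $q_0=\gcd(d^{\nu_0-1}-d^{\nu_1-1},\delta)$, a well-defined positive integer since $\nu_0\ne\nu_1$; as $q_0$ divides both terms, $q_0\mid\beta$, so $\beta=q_0\beta_0$, $\delta=q_0\delta_0$ with $\beta_0=\kappa_0 d m+\kappa_0\kappa_1\delta_0$, where $m:=(d^{\nu_0-1}-d^{\nu_1-1})/q_0\in\mathbb{Z}$ and $\beta_0=\kappa_1|\beta_0|$. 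For $1\le r\le\lfloor(d-1)/\delta_0\rfloor$ take $\omega=r|\beta_0|$; then $\kappa_0 r|\beta_0|=\kappa_1 r d m+r\delta_0\equiv r\delta_0\pmod{d}$, and the stated range of $r$ is exactly what forces $1\le r\delta_0\le d-1$, so $[\kappa_0 r|\beta_0|]_d=r\delta_0$ and $d\nmid r|\beta_0|$; finally $T(r|\beta_0|)=(\alpha r|\beta_0|+\beta r\delta_0)/d=(\alpha r|\beta_0|+\kappa_1|\beta_0|r\delta)/d=r|\beta_0|(\alpha+\kappa_1\delta)/d=r|\beta_0|\,d^{\nu_1-1}$, using $\beta r\delta_0=\beta_0 r\delta$ and $\alpha+\kappa_1\delta=d^{\nu_1}$, and the lemma produces the cycles \eqref{TrivialCycles3} of length $\nu_1$. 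The one genuinely delicate point is that $q_0$ must simultaneously make $m$ an integer (so the congruence $\kappa_0 r|\beta_0|\equiv r\delta_0$ is clean) and shrink $\delta$ to a $\delta_0$ small enough that $r\delta_0<d$ over the whole claimed range — this is what pins down both the seeds $r|\beta_0|$ and the exact range of $r$ in \eqref{TrivialCycles3}. Everything else is sign bookkeeping around $\kappa_0,\kappa_1$ and the passage between $\beta$ and $|\beta|$, together with a routine check of the degenerate subcases $\nu_0=1$ or $\nu_1=1$, where the cycles above collapse to fixed points.
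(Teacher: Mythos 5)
Your proof is correct and follows essentially the same route as the paper's: for each seed $\omega$ you verify the single equation $T(\omega)=\omega\,d^{\nu-1}$ via the identities $\alpha+\kappa_0\beta=d^{\nu_0}$ and $\alpha+\kappa_1\delta=d^{\nu_1}$ and then let the divisions by $d$ close the loop, which is exactly the paper's computation; your explicit structural lemma is just a cleaner packaging of its ``it follows that $T^{(\nu)}(\omega)=\omega$''. The one place you diverge is actually to your credit: you note that for $\kappa_0=-1$ the inequality in \eqref{CdtCollatztriplet} reduces to $\alpha>d^{\nu_0-1}(d-1)$ and must be assumed (it can genuinely fail, e.g.\ $d=3$, $\nu_0=5$, $\nu_1=1$, $\kappa_1=-1$, $\delta=1$ gives $\alpha=4$, $\beta=-239$ and $T(1)<0$), whereas the paper dismisses this step as ``easy to check''.
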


\begin{proof}
	First, it is easy to check that the condition \eqref{CdtCollatztriplet} is satisfied.
	\begin{enumerate}
		\item For  $\kappa_0=1$ and  for all  $r\in\{1,\ldots,d-1\}$,  we have
		$T(r)=r\,\dfrac{\alpha+\beta}{d}=r\,\dfrac{d^{\nu_0}}{d}=r\,d^{\nu_0-1}$.
		It follows that $T^{(\nu_0)}(r)=r$, which gives rise to the trivial cycles given by  \eqref{TrivialCycles1}. Thus, we have   
		$\# \mathrm{Cycl}(d,\alpha,\beta)_+\geq d-1$. 	
		\item   For   $\kappa_0=\pm 1$, if  the condition $\kappa_1\beta>0$ holds, then:
		\begin{enumerate}
			\item[2-1.] If $\delta=1$, we have  $\alpha=d^{\nu_1}-\kappa_1$  and
			$\beta=\kappa_0(d^{\nu_0}-\alpha)$. As   $\kappa_1\beta>0$, then, for  all  $r\in \{1,\ldots,d-1\}$,  we have
			$\kappa_1 r\beta>0$,
			$\kappa_0\kappa_1 r\beta=\kappa_1r(d^{\nu_0}-d^{\nu_1})+r$ and $[\kappa_0\kappa_1 r\beta]_d=r$. It follows that,
			$$T(\kappa_1r\beta )=\dfrac{\alpha\kappa_1r \beta+\beta[\kappa_0\kappa_1r\beta]_d}{d}=\beta r\,\dfrac{\kappa_1\alpha+1}{d}.$$
			But, $\kappa_1\alpha=\kappa_1d^{\nu_1}- 1 $, then 
			$T(\kappa_1r\beta )=\kappa_1r\beta\,\dfrac{d^{\nu_1}}{d}=\kappa_1r\beta\,d^{\nu_1-1}$.
			It follows that $T^{(\nu_1)}(\kappa_1r\beta)=\kappa_1r\beta$. As $|\beta|=\kappa_1\beta$, this gives rise to  the trivial cycles in  (\ref{TrivialCycles2}) and  $\# \mathrm{Cycl}(d,\alpha,\beta)_{\pmb{-}}\geq d-1$.   But for $\kappa_0=1$: 
			If $\nu_0=\nu_1$, then  $\kappa_1\beta=1$ and
			the cycles in (\ref{TrivialCycles1})  and (\ref{TrivialCycles2}) coincide.  Thus,  $\# \mathrm{Cycl}(d,\alpha,\beta)_{\pmb{+}}\geq d-1$, else  if $\nu_0\not=\nu_1$, then  the cycles in (\ref{TrivialCycles1})  and (\ref{TrivialCycles2}) are different and  
			$\# \mathrm{Cycl}(d,\alpha,\beta)_{\pmb{+}}\geq (d-1)+(d-1)=2(d-1)$.
			\item[2-2.]  If $\;1< \delta\leq d-1$ and   $\nu_0$, $\nu_1$ are two integers  $\geq 2$ with $\nu_0\not=\nu_1$. \\Let  $q_0=gcd(d^{\nu_0-1}-d^{\nu_1-1},\delta)>0$, then there exist two integers $q_1$ and $\delta_0>0$ such that  $d^{\nu_0-1}-d^{\nu_1-1}=q_1q_0$ and  $\delta=q_0\delta_0$. Thus, $\beta=q_0\beta_0$ with $\beta_0=\kappa_0(dq_1+\kappa_1\delta_0)$. It follows that, 
			for all integer $r\in\{1,\ldots,d-1\}$, we have 	$\kappa_1 r\beta_0=\kappa_0(\kappa_1rq_1d+r\delta_0)>0$ since $\kappa_1\beta=q_0\kappa_1\beta_0>0$ and $q_0>0$. For $r$ such that
			$r \leq \left\lfloor\dfrac{d-1}{\delta_0}\right\rfloor$, we have  $0<r\delta_0 \leq d-1$ and
			$[\kappa_0\kappa_1 r\beta_0]_d=r\delta_0$.   Then 
			\begin{eqnarray*}
				T(\kappa_1r\beta_0 )=\dfrac{\alpha\kappa_1 r\beta_0+\beta[\kappa_0\kappa_1r\beta_0]_d}{d}=\dfrac{\alpha\kappa_1 r\beta_0+r\delta_0\beta }{d}=\dfrac{\alpha\kappa_1 r\beta_0+r\delta_0q_0\beta_0 }{d}=r\beta_0\,\dfrac{\kappa_1\alpha+\delta}{d}.
			\end{eqnarray*}
			But, $\kappa_1\alpha=\kappa_1d^{\nu_1}- \delta$, we get
			$T(\kappa_1r\beta_0 )=\kappa_1r\beta_0\,\dfrac{d^{\nu_1}}{d}=\kappa_1r\beta_0\,d^{\nu_1-1}$.
			As $|\beta_0|=\kappa_1\beta_0$, this gives rise to the trivial cycles given by (\ref{TrivialCycles3}) 
			and   then 
			$\# \mathrm{Cycl}(d,\alpha,\beta)_{\pmb{+}}\geq (d-1)+\left\lfloor\dfrac{d-1}{\delta_0}\right\rfloor$ and
			$\# \mathrm{Cycl}(d,\alpha,\beta)_{\pmb{-}}\geq \left\lfloor\dfrac{d-1}{\delta_0}\right\rfloor$.		
		\end{enumerate}
	\end{enumerate}
	\hfill\end{proof}

\begin{remarks}
	For the classical Collatz  triplet  $(1,2,3)_{\pmb{+}}$, we have $d=2$,   $\alpha=3$, $\beta=1$,  $\kappa_0=+1$ and $\alpha+\beta = 4=2^{\nu_0} $, with $\nu_0=2$. In this case, 
	we may write $\alpha$ and $\beta$ in the two following forms:
	\begin{enumerate}
		\item   $\alpha =3=2^{\nu_1}-\kappa_1\delta$ and $\beta =1= 2^{\nu_0}-\alpha$ with $\nu_1=\nu_0=2$ and $\kappa_0=\kappa_1=\delta=1$. 
		\item   $\alpha =3=2^{\nu_1}-\kappa_1\delta$ and $\beta =1= 2^{\nu_0}-\alpha$ with $\nu_1=1$,  $\nu_0=2$, $\delta=\kappa_0=1$ and $\kappa_1=-1$. In this case, $\kappa_1\beta<0$.  
	\end{enumerate}
	According to the previous theorem,  for  both  last cases, we should have
	at least one trivial cycle  of length $\nu_0=2$:  $\Omega(1)=(1\rightarrow 2\rightarrow1)$.
\end{remarks}

Let us now give few examples that  illustrate the previous Theorem.
\begin{examples}\label{Examples3.1}
	\begin{enumerate}
		\item 	Let us choose for instance the parameters
		$d=3$,  $\kappa_0=\kappa_1=\delta=1$, $\nu_0=3$ and $\nu_1=2$. Then $\alpha = d^{\nu_1}-\kappa_1\delta=8$ and $\beta =\kappa_0(d^{\nu_0}-\alpha)=19$. The  corresponding 
		mapping  $T:\mathbb{N}\longrightarrow \mathbb{N} $ is given  by
		\begin{equation}\label{mapT_Exp2}
			T(n)=\left\{\begin{array}{ccc}
				n/3&\mathrm{if}& \modd{n}{0}{3}\\ 
				\Bigl(8 n+19 [ n]_3\Bigr)/3&\mathrm{if}& \notmodd{n}{0}{3}.\\
			\end{array}\right.
		\end{equation}
This corresponds to  Item~2-1 in the previous Theorem and we have $\kappa_1\beta>0$, then
 $(3,8,19)_+$ is  an  admissible triplet having  at least two cycles 
$\Omega(1)$  and $\Omega(2)$  of length $\nu_0=3$ and the two others cycles 
$\Omega(19)$ and  	$\Omega(2\times 19=38)$ of length $\nu_1=2$. We have
		$
		\Omega(1)=(1\rightarrow 9\rightarrow 3\rightarrow  1)$, 
		$\Omega(2)=(2\rightarrow18\rightarrow 6\rightarrow  2)$, 
		$\Omega(19)=(19\rightarrow 57\rightarrow 19)$  and 
		$\Omega(38)=(38\rightarrow 114\rightarrow 38)$.
		For this example we may have divergent trajectories.
	It seems   that  $(3,8,19)_{\pmb{+}}$  is a weakly  admissible  triplet with order  $\# \mathrm{Cycl}(3,8,19)_{\pmb{+}}= 4$.

		\item In this example, we choose, for instance, the parameters: $d=3$, $\kappa_0=\kappa_1=\delta=1$, $\nu_0=2$ and $\nu_1=3$. Then $\alpha = d^{\nu_1}-\kappa_1\delta=26$ and $\beta =\kappa_0(d^{\nu_0}-\alpha)=-17$.  The corresponding mapping  to the   triplet $(3,26,-17)_{\pmb{+}}$ is  given as 
		$$
		T(n)=\left\{\begin{array}{ccc}
			n/3&\mathrm{if}& \modd{n}{0}{3}\\ 
			\Bigl(26 n-17 [ n]_3\Bigr)/3&\mathrm{if}& \notmodd{n}{0}{3}.\\
		\end{array}\right.$$
		As here   $\kappa_1\beta<0$, this example  corresponds to the item~1, in the previous Theorem. Then $(3,26,-17)_{\pmb{+}}$ is an admissible triplet of order  $\# \mathrm{Cycl}(3,26,-17)_{\pmb{+}}\geq d-1=2$. In this case, we have at least $2$
		trivial  cycles of length $\nu_0=2$ which are: $\Omega(1)=(1\rightarrow 3\rightarrow 1)$ and 
		$\Omega(2)=(2\rightarrow 6\rightarrow 2)$.
		\item In this example, we take the similar parameters as in the previous example, we just change  $\kappa_1=-1$. Then 
		$\alpha = d^{\nu_1}-\kappa_1\delta=28$ and $\beta =\kappa_0(d^{\nu_0}-\alpha)=-19$.  The corresponding mapping to the triplet $(3,28,-19)_{\pmb{+}}$ is  given as 
		$$
		T(n)=\left\{\begin{array}{ccc}
			n/3&\mathrm{if}& \modd{n}{0}{3}\\ 
			\Bigl(28 n-19 [ n]_3\Bigr)/3&\mathrm{if}& \notmodd{n}{0}{3}.\\
		\end{array}\right.
		$$
		As in this case   $\kappa_1 \beta>0$, $\delta=1$ and $\nu_0\not=\nu_1$,  this  example  corresponds to the item~2-1  in the previous Theorem.  Then $(3,28,-19)_{\pmb{+}}$ is an admissible triplet of  order $\# \mathrm{Cycl}(3,28,-19)_{\pmb{+}}\geq 2(d-1)=4$. In this case, we have at least $4$ trivial  cycles. Two of  length $\nu_0=2$ which are: $\Omega(1)=(1\rightarrow 3\rightarrow 1)$ and 
		$\Omega(2)=(2\rightarrow 6\rightarrow 2)$ and  two others of length $\nu_1=3$, which are: $\Omega(|\beta|=19)=(19\rightarrow 171\rightarrow 57\rightarrow 19)$ and 
		$\Omega(2|\beta|=38)=(38\rightarrow  342\rightarrow 114\rightarrow 38)$.
		
		\item In this example,  we consider the   triplet 
		$(12,134,1594)_{\pmb{+}}$
		corresponding to the parameters 
		$d=12$, $\nu_0=3$, $\nu_1=2$, $\delta=10$ and $\kappa_0=\kappa_1=1$. Then $\alpha=d^{\nu_1}-\kappa_1\delta=134$ and $\beta=\kappa_0(d^{\nu_0}-\alpha)=1594$. The  corresponding mapping is:
		$$
		T(n)=\left\{\begin{array}{ccc}
			n/12&\mathrm{if}& \modd{n}{0}{12}\\ 
			\Bigl(134n+1594 [ n]_{12}\Bigr)/12&\mathrm{if}& \notmodd{n}{0}{12}.\\
		\end{array}\right.
		$$
		We have $\kappa_1\beta >0$ and $\nu_0\not=\nu_1$ with $\delta = 10>1$. This corresponds to Item~2-2 in the previous Theorem.  Then $(12,134,1594)_{\pmb{+}}$  is an  admissible triplet of 
		$\# \mathrm{Cycl}(12,134,1594)_{\pmb{+}}\geq d-1=11$. It follows that  $(12,134,1594)_{\pmb{+}}$ has at least
		$d-1=11$ trivial cycles  of length $\nu_0=3$, which are:
		$\Omega(r)=(r\rightarrow 144r\rightarrow 12r\rightarrow r)$  for $1\leq r\leq  11$.
		As  $q_0= gcd(d^{\nu_0-1}-d^{\nu_1-1},\delta)=2$.  
		Then $\delta_0=5$, $\beta_0= 797$ and 
		$\left\lfloor\dfrac{d-1}{\delta_0}\right\rfloor= 2$. It follows that  the two  other cycles  of length $\nu_1=2$ are:
		$\Omega(797r)= (797r\rightarrow 9564 r\rightarrow 797r)$ for $r=1,2$.
		Finally, the admissible  triplet $(12,134,1594)_{\pmb{+}}$ has at least  $13$ cycles.
		
		\item  In this example, we consider the   triplet 
		$(3,8,5)_{\pmb{-}}$ corresponding to the parameters:  $d=3$, $\nu_0=1$, $\nu_1=2$, $\delta=1$,  $\kappa_0=-1$ and $\kappa_1=1$.
		Then $\alpha=d^{\nu_1}-\kappa_1\delta=8$ and $\beta=\kappa_0(d^{\nu_0}-\alpha)=5$. The  associated  mapping is:
		$$
		T(n)=\left\{\begin{array}{ccc}
			n/3&\mathrm{if}& \modd{n}{0}{3}\\ 
			\Bigl(8n+5 [ -n]_{3}\Bigr)/3&\mathrm{if}& \notmodd{n}{0}{3},\\
		\end{array}\right.\;\textrm{or}\;
T(n)=\left\{\begin{array}{ccc}
			n/3&\mathrm{if}& \modd{n}{0}{3}\\ 
			\Bigl(8n+10 \Bigr)/3&\mathrm{if}& \modd{n}{1}{3},\\
			\Bigl(8n+5\Bigr)/3&\mathrm{if}& \modd{n}{2}{3}.\\
		\end{array}\right.
		$$
		We have $\kappa_1\beta>0$  and $\delta=1$. This corresponds to Item~2-1 in the previous Theorem.  Then $(3,8,5)_{\pmb{-}}$ is an admissible triplet having at least
$d-1=2$ trivial cycles  of length $\nu_1=2$, which are:
		$\Omega(r|\beta|)=(5r\rightarrow15r\rightarrow  5r)$  for $1\leq r\leq  2$. Namely, $\Omega(5)=(5\rightarrow15\rightarrow  5)$  and $\Omega(10)=(10\rightarrow30\rightarrow  10)$ .

		\item Now let us give a general example: For   $d\geq 2$, we choose $\delta= 1$,  $\kappa_1=-1$ and $\kappa_0=1$ with  
		$\nu_1>\nu_0\geq 2$,  so $\kappa_1 \beta=-\beta>0$. We get
		$\alpha=d^{\nu_1}+1$ and $\beta=d^{\nu_0}-\alpha=d^{\nu_0}-d^{\nu_1}-1$. Then  the triplet $(d,\alpha,\beta)_{\pmb{+}}$ has at least  $2(d-1)$ trivial cycles  which are
		are:  
		$$
		\Omega(r)=	( r\rightarrow rd^{\nu_0-1}\rightarrow r d^{\nu_0-2}\rightarrow\ldots\rightarrow rd \rightarrow  r),
		$$
		and
		$$
		\Omega(r|\beta|)=	( r |\beta|\rightarrow r |\beta| d^{\nu_1-1}\rightarrow r|\beta|  d^{\nu_1-2}\rightarrow\ldots\rightarrow r|\beta| d\rightarrow  r |\beta|),
		$$
		of length $\nu_0$ and $\nu_1$, respectively, for all $r\in\{1,\ldots,d-1\}$.
	\end{enumerate}
	
\end{examples}

\subsection{\textbf{The admissible triplets $ (d,\alpha,\beta)_{\pmb{+}}$ with  $\alpha = d^{\nu_1}+1$ and $\beta=d^{2\mu_0+\nu_1}-\alpha^2$ for  $ d\geq 2$ with $\mu_0\geq 1$ and $2\mu_0>\nu_1\geq 1$.}}
Let us return back to the classical Collatz case. The   $3x+k$ problem were previously studied in \cite{Lagarias1990} it corresponds to the triplets $(2,3,k)_{\pmb{+}}$ problem. In \cite{Matthews2010},  it is indicated that in
2002,  A. S. Jones pointed out  that  for the problem $3x+k$,  if $k= 2^{2c+1}-9$, the numbers $2^k+3$ for $1\leq k\leq c$, generate $c$    different cycles of length $2c+1$. We aim in this subsection,  to give an extension to  the result pointed out by A. S. Jones.  Let $d\geq 2$ and 
consider the particular  triplet  $(d,\alpha,\beta)_{\pmb{+}}$  where 
 $\alpha = d^{\nu_1}+1$ and $\beta=d^{2\mu_0+\nu_1}-\alpha^2$ for  $ d\geq 2$ with $\mu_0\geq 1$ and $2\mu_0>\nu_1\geq 1$.  Then $\alpha+\kappa_0\beta =\alpha + d^{2\mu_0+\nu_1}-\alpha^2=\lambda_0d^{\nu_1}$  with $\lambda_0= d^{2\mu_0}-d^{\nu_1}-1$ and $\kappa_0=1$. It follows that the triplet $ (d,\alpha,\beta)_{\pmb{+}}$  satisfies the condition \eqref{CdtCollatztriplet} and $\beta = \lambda_0d^{\nu_1}-\alpha$.
We may state  the following result.
\begin{theorem}\label{trivialcycle3.4} 
	Let $d\geq 2$,  $\mu_0\geq 1$ and $\nu_1\geq 1$ such that $2\mu_0> \nu_1$ and consider the  triplet $(d,\alpha,\beta)_{\pmb{+}}$ with 
	$\alpha =d^{\nu_1}+1$ and $\beta = d^{2\mu_0+\nu_1}-\alpha^2$. Then, $(d,\alpha,\beta)_{\pmb{+}}$  is an admissible triplet with  at least $q$ cycles  
	$\Omega(\omega_{k,r})$,   of length $2\mu_0+\nu_1$    with $q\geq  \mu_0(d-1)^2$, generated by the starting numbers
	$\omega_{k,r}=r_1(r_2d^k+r_3\alpha)$ 
	for all $k$ and all  $r=(r_1,r_2,r_3)\in\mathbb{N}^3$
	such that
	\begin{equation}\label{kr1r2r3}
		1\leq k\leq \mu_0,\;
		1\leq r_1\leq d-1, \;
		1\leq r_2,\leq d-1,\;
		1\leq  r_3\leq d-1,\;
		1\leq r_1r_2\leq d-1, \;
		1\leq r_1r_3\leq d-1.
	\end{equation}
	Furthermore:
	\begin{itemize}
		\item  If $\nu_1=1$, then   $(d,\alpha,\beta)_{\pmb{+}}$  has the further cycle $\Omega(\beta)$ of length $d$,
		\item  If  $\nu_1>1$, then   the sequence $(s(n))_{n\geq 1}$, where $\displaystyle s(n):=\dfrac{1}{n}\sup_{\ell\geq 1}T^{(\ell)}(n)$,
		is unbounded.
	\end{itemize}
\end{theorem}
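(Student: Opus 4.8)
The whole proof is driven by the two identities the definition of $\beta$ forces: $\alpha-1=d^{\nu_1}$ (whence $\gcd(\alpha,d)=1$) and $\alpha^{2}+\beta=d^{2\mu_0+\nu_1}$; the relation $\alpha+\beta=\lambda_0 d^{\nu_1}$ recorded just before the statement already shows $(d,\alpha,\beta)_{\pmb{+}}$ obeys \eqref{CdtCollatztriplet}. I would first confirm the cycle by iterating $T$ explicitly on $\omega_{k,r}=r_1r_2d^{k}+r_1r_3\alpha$. Since $k\ge1$ and $\modd{\alpha}{1}{d}$, the constraint $1\le r_1r_3\le d-1$ gives $[\omega_{k,r}]_d=r_1r_3$, so one use of the affine branch of $T$, together with $\alpha(r_1r_3\alpha)+\beta r_1r_3=r_1r_3(\alpha^{2}+\beta)=r_1r_3\,d^{2\mu_0+\nu_1}$, yields
\[
T(\omega_{k,r})=\alpha r_1r_2\,d^{\,k-1}+r_1r_3\,d^{\,2\mu_0+\nu_1-1}.
\]
Because $k\le\mu_0$ and $2\mu_0>\nu_1$, every exponent occurring along the orbit stays $\ge1$, so $k-1$ successive divisions by $d$ are legitimate and lead to $y_k=\alpha r_1r_2+r_1r_3\,d^{\,2\mu_0+\nu_1-k}$; now $1\le r_1r_2\le d-1$ forces $[y_k]_d=r_1r_2$, and a second use of the affine branch (again invoking $\alpha^{2}+\beta=d^{2\mu_0+\nu_1}$) gives $r_1r_2\,d^{\,2\mu_0+\nu_1-1}+\alpha r_1r_3\,d^{\,2\mu_0+\nu_1-k-1}=d^{\,2\mu_0+\nu_1-k-1}\,\omega_{k,r}$. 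As $\omega_{k,r}$ is prime to $d$, exactly $2\mu_0+\nu_1-k-1$ further divisions return to it, so the orbit closes in $1+(k-1)+1+(2\mu_0+\nu_1-k-1)=2\mu_0+\nu_1$ steps; comparing the $d$-adic valuations of the $2\mu_0+\nu_1$ numbers listed shows they are pairwise distinct, hence the cycle length is exactly $2\mu_0+\nu_1$ and $(d,\alpha,\beta)_{\pmb{+}}$ is admissible.

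For the count I would restrict to $r_1=1$, leaving $\mu_0(d-1)^{2}$ triples $(k,r_2,r_3)$ with $1\le k\le\mu_0$ and $1\le r_2,r_3\le d-1$, and show the cycle determines them: the maximal $d$-adic valuation attained on the orbit is $2\mu_0+\nu_1-k-1$, which (using $k\le\mu_0$) strictly exceeds every other valuation occurring on it, so $k$ is recovered from the cycle; the number at the foot of that longest run of divisions is $\omega_{k,(1,r_2,r_3)}$ itself, and then $r_3=[\omega_{k,(1,r_2,r_3)}]_d$ and $r_2=(\omega_{k,(1,r_2,r_3)}-r_3\alpha)/d^{k}$ follow. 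Hence distinct triples yield distinct cycles and $q\ge\mu_0(d-1)^{2}$. When $\nu_1=1$ (so $\alpha=d+1$, and $\beta>0$ in the cases of interest), the elementary identities $[j\beta]_d=d-j$ and $j\alpha+(d-j)=d(j+1)$ give $T(j\beta)=(j+1)\beta$ for $1\le j\le d-1$ together with $T(d\beta)=\beta$, so $\beta\to2\beta\to\cdots\to d\beta\to\beta$ is a cycle of length $d$, distinct in the non-degenerate range from the cycles above (its maximal valuation is $1$, against $\ge\mu_0$ for an $\omega$-cycle, and for $\mu_0=1$ the two lengths $d$ and $2\mu_0+1$ disagree unless $d=3$).

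If instead $\nu_1\ge2$, I would prove a realizability lemma: since $\gcd(\alpha,d)=1$, for every $m$ and every pattern $(a_0,\dots,a_{m-1})\in\{1,\dots,d-1\}^{m}$ the set $S_m=\{n\in\mathbb{N}:[T^{(i)}(n)]_d=a_i \text{ for } 0\le i<m\}$ is a single residue class modulo $d^{m}$. This is an induction on $m$: on $S_j$, parametrised by $n=\rho_j+d^{j}t$, one computes $T^{(j)}(n)=\gamma_j+\alpha^{j}t$ with $\gamma_j=T^{(j)}(\rho_j)$, and $t\mapsto\gamma_j+\alpha^{j}t\bmod d$ is a bijection because $\gcd(\alpha^{j},d)=1$, which selects exactly one residue of $t$ mod $d$ realizing the next symbol $a_j$. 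Taking a large representative $n=\rho_m+d^{m}t\in S_m$, all of the first $m$ iterations use the affine branch, so $T^{(m)}(n)=\gamma_m+\alpha^{m}t$ while $n=\rho_m+d^{m}t$, whence $T^{(m)}(n)/n\to(\alpha/d)^{m}$ as $t\to\infty$; since $\nu_1\ge2$ gives $\alpha/d>d^{\nu_1-1}\ge d\ge2$, for each $m$ there is an $n$ with $s(n)\ge T^{(m)}(n)/n>2^{m-1}$, so $(s(n))_{n\ge1}$ is unbounded. The principal obstacle is the orbit bookkeeping of the first two paragraphs — correctly separating the division steps from the affine steps and certifying that no exponent falls below $1$ before the orbit closes — together with making the valuation‑based distinctness arguments rigorous; once $\gcd(\alpha,d)=1$ is in hand, the unboundedness assertion is comparatively routine.
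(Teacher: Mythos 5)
Your proposal is correct in substance and, for the cycle construction and the $\nu_1=1$ case, it follows essentially the same computation as the paper: the identity $\alpha^2+\beta=d^{2\mu_0+\nu_1}$ drives the two affine steps, the residues $[\omega_{k,r}]_d=r_1r_3$ and $[y_k]_d=r_1r_2$ are exactly the paper's, and your observation that $T^{(k+1)}(\omega_{k,r})=d^{2\mu_0+\nu_1-k-1}\omega_{k,r}$ is a clean repackaging of the paper's inductive formula for $T^{(k+\ell)}$. You go beyond the paper in two places. First, the paper never proves that distinct parameters yield distinct cycles (so its claim $q\geq\mu_0(d-1)^2$ is really only asserted); your recovery of $(k,r_2,r_3)$ from the unique maximum $d$-adic valuation on the orbit closes that gap and is a genuine improvement. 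Second, for $\nu_1>1$ the paper exhibits the explicit sequence $n_k=\beta(d^k+1)$ and computes $T^{(\ell)}(n_k)$ in closed form, whereas you prove a general realizability lemma (every length-$m$ pattern of nonzero residues is a single class mod $d^m$, using $\gcd(\alpha,d)=1$) and let $T^{(m)}(n)/n\to(\alpha/d)^m$; your route is more conceptual and reusable, the paper's is more concrete and self-contained.

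One small correction: at the end of your first paragraph you assert that the $2\mu_0+\nu_1$ orbit elements have pairwise distinct $d$-adic valuations. They do not — the valuation sequence along one period is $0,\,k-1,\,k-2,\ldots,0,\,2\mu_0+\nu_1-k-1,\ldots,1$, so the value $0$ (and every small value) occurs twice. What is true, and what you correctly use in your second paragraph, is that the \emph{maximum} valuation $2\mu_0+\nu_1-k-1$ is attained exactly once per period; since the minimal period $p$ divides $2\mu_0+\nu_1$ and a proper divisor would force each valuation to recur at least twice as often, this already gives minimality of the period. Replace the ``pairwise distinct'' sentence by that uniqueness-of-maximum argument and the proof is complete (and, on the points of distinctness and minimality, more careful than the paper's own).
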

\begin{proof}
	For $r=(r_1,r_2,r_3)$ satisfying \eqref{kr1r2r3},  we have  $[\omega_{k,r}]_d=[r_1r_3\alpha]_d=r_1r_3$, then  for $k\geq 1$:
	$$T(\omega_{k,r})=\dfrac{\alpha \omega_{k,r}+\beta r_1r_3 }{d}=\dfrac{\alpha r_1(r_2d^k+r_3\alpha)+\beta r_1r_3 }{d}=
	\alpha r_1r_2d^{k-1}+r_1r_3\dfrac{\beta+\alpha^2}{d}.$$
	As $\alpha^2+\beta =d^{2\mu_0+\nu_1}$, it follows that
	$\displaystyle T(\omega_{k,r})=
	\alpha r_1r_2d^{k-1}+r_1r_3d^{2\mu_0+\nu_1-1}$. Then,  for $1\leq k\leq \mu_0$,  at the step $k$, we have
	$$T^{(k)}(\omega_{k,r})= \alpha r_1r_2+r_1r_3d^{2\mu_0+\nu_1-k}.$$
	As $[r_1r_2\alpha]_d=r_1r_2$, it follows that
	\begin{eqnarray}T^{(k+1)}(\omega_{k,r})=
		\dfrac{\alpha r_1(\alpha r_2+r_3d^{2\mu_0+\nu_1-k})+\beta r_1r_2 }{d}
		=	\alpha r_1r_3d^{2\mu_0+\nu_1-k-1}+r_1r_2\dfrac{\beta+\alpha^2}{d}\nonumber\\=\alpha r_1r_3d^{2\mu_0+\nu_1-k-1}+r_1r_2d^{2\mu_0+\nu_1-1}.\nonumber
	\end{eqnarray}
	Then,  by induction for $1\leq \ell\leq 2\mu_0+\nu_1-k$, we have 
	$\displaystyle T^{(k+\ell)}(\omega_{k,r})=\alpha r_1r_3d^{2\mu_0+\nu_1-k-\ell}+r_1r_2d^{2\mu_0+\nu_1-\ell}$, which gives  for $\ell=2\mu_0+\nu_1-k$ that
	$\displaystyle  T^{(2\mu_0+\nu_1)}(\omega_{k,r})=\omega_{k,r}$. I follows that  $\Omega(\omega_{k,r})$ is a cycle of length $2\mu_0+\nu_1$ for all $k$ and $r=(r_1,r_2,r_3) $ satisfying  \eqref{kr1r2r3} .
	 Furthermore: 
	 \begin{itemize}
	 	\item  	 If $\nu_1=1$, we have $\beta = d^{2\mu_0+1}-\alpha^2=d^{2\mu_0+1}-(d+1)^2$, it follows that $\alpha-1=d$,  $[\beta]_d=d-1$ and 
	 	$$T(\beta)=\dfrac{\alpha \beta+\beta[\beta]_d}{d}=\beta \dfrac{\alpha+d-1}{d}=2\beta.$$
	 	By induction, we have for $1\leq k\leq d-1$, $[k\beta]_d =d-k$ and 
	 	$$T^{(k)}(\beta)=\dfrac{k\alpha \beta+\beta[k\beta]_d}{d}=\beta \dfrac{k\alpha+d-k}{d}=(k+1)\beta.$$
	 	Then, $T^{(d)}(\beta)=\beta$, and $\Omega(\beta)$ is a cycle of length $d$.
	 	\item  If $\nu_1>1$, let us consider the sequence $n_k=\beta(d^k+1)$ for $k\geq 0$.   As $\alpha=d^{\nu_1}+1$, then
	 	$[n_k]_d = [\beta]_d=[-\alpha]_d=d-1$. It follows that 
	 	$$T(n_k)=\dfrac{\alpha \beta(d^k+1)+\beta(d-1)}{d}=\beta\dfrac{\alpha d^k+(\alpha-1)+d}{d}=\beta(\alpha d^{k-1}+d^{\nu_1-1}+1).$$
	 So, also
	 		 	$$T^{(2)}(n_k)=\dfrac{\alpha \beta(\alpha d^{k-1}+d^{\nu_1-1}+1)+\beta(d-1)}{d}=\beta(\alpha^2d^{k-2}+\alpha d^{\nu_1-2}+d^{\nu_1-1}+1).$$
It follows by induction that for $1\leq \ell\leq k$, we have	 	
$$T^{(\ell)}(n_k)=\beta\bigl(\alpha^{\ell}d^{k-\ell}+\sum_{i=0}^{\ell-1}\alpha^ id^{\nu_1-i-1}+1
\bigr).$$
Then, for  $1\leq \ell\leq k$, we have the following formula
$$T^{(\ell)}(n_k)=\beta\Bigl(
\bigl(\dfrac{\alpha}{d}\bigr)^{\ell}d^k
+d^{\nu_1}\dfrac{\bigl(\dfrac{\alpha}{d}\bigr)^{\ell}-1}{\alpha-d}+1\Bigl).$$
As $\alpha>d$, it follows that
$$\sup_{k\geq \ell\geq 1}T^{(\ell)}(n_k)=
\beta\Bigl(
\bigl(\dfrac{\alpha}{d}\bigr)^{k}d^k
+d^{\nu_1}\dfrac{\bigl(\dfrac{\alpha}{d}\bigr)^{k}-1}{\alpha-d}+1\Bigl),$$
and then
$$\dfrac{1}{n_k}\sup_{\ell\geq 1}T^{(\ell)}(n_k)\geq \dfrac{1}{n_k}\sup_{k\geq \ell\geq 1}T^{(\ell)}(n_k)=
\dfrac{d^k}{d^k+1}\Bigl(\dfrac{\alpha}{d}\Bigr)^{k}+
\dfrac{d^{\nu_1}}{d^k+1}\dfrac{\Bigl(\dfrac{\alpha}{d}\Bigr)^{k}-1}{\alpha-d}+\dfrac{1}{d^k+1}\geq 
\dfrac{d^k}{d^k+1}\Bigl(\dfrac{\alpha}{d}\Bigr)^{k}.$$
As $\alpha>d$, it follows that  $\displaystyle \dfrac{1}{n_k}\sup_{\ell\geq 1}T^{(\ell)}(n_k)\longrightarrow +\infty$ as $k\longrightarrow +\infty$.

\end{itemize}

	\hfill\end{proof}

\begin{example}\label{Exp(5,6,3089)+}
	\begin{itemize}
		\item For instance,  we choose  $d=5$, $\nu_1=1$ and $\mu_0=2$. It follows that $\alpha =d^{\nu_1}+1=6$ and  $\beta =d^{2\mu_0+\nu_1}-\alpha^2= 3089$.  Then, $(5,6,3089)_{\pmb{+}}$ is an admissible triplet 
		possessing at least the following $32$ cycles  of length $2\mu_0+\nu_1=5$
		generated by the starting numbers $\omega_{k,r}$ with 
		$k$ and $r=(r_1,r_2,r_3)$ satisfying the conditions \eqref{kr1r2r3}. The $32$ cycles are:
		$\Omega(11)$, 
		$\Omega(16)$,
		$\Omega(17)$,
		$\Omega(21)$,
		$\Omega(22)$,
		$\Omega(23)$,
		$\Omega(26)$,
		$\Omega(27)$,
		$\Omega(28)$,
		$\Omega(29)$,
		$\Omega(31)$,
		$\Omega(32)$,
		$\Omega(33)$,
		$\Omega(34)$,
		$\Omega(37)$,
		$\Omega(38)$,
		$\Omega(39)$,
		$\Omega(43)$,
		$\Omega(44)$,
		$\Omega(49)$,
		$\Omega(56)$,
		$\Omega(62)$,
		$\Omega(68)$,
		$\Omega(74)$,
		$\Omega(81)$,
		$\Omega(87)$,
		$\Omega(93)$,
		$\Omega(99)$,
		$\Omega(106)$,
		$\Omega(112)$,
		$\Omega(118)$ and 
		$\Omega(124)$.  Furthermore,  as in this case $\nu_0=1$, the triplet $(5,6,3089)_{+}$ has the additional   cycle
		$\Omega(\beta =3089)$ of length $d=5$ starting by $\beta=3089$. It appears that the triplet $(5,6,3089)_{\pmb{+}}$ is a strongly  admissible triplet with exactly the order $=33$.  
	\end{itemize}

\end{example}

%========================
\subsection{\textbf{The triplets $\bf  (d,\alpha,\beta_0)_{\pmb{\pm}}$
		and $\bf  (d,\alpha,\beta)_{\pmb{\pm}}$ with $\beta= a_0\beta_0$ and $\modd{a_0}{1}{d}$.}}
%========================

In this subsection, we will establish an extension  to a result   given in  \cite{Lagarias1990} for the classical Collatz case. More precisely, in  \cite{Lagarias1990},  Lagarias proposed the map $L_\beta$,  see Section~1.  We may observe that the trivial cycle   $\Omega(\beta)=(\beta \rightarrow 2\beta\rightarrow \beta)$  of the mapping $L_\beta$ is obtained by multiplying the trivial cycle
$\Omega(1)=(1  \rightarrow 2\rightarrow  1)$  of $L_1$  by $\beta$.  In the following theorem, we will give a general  similar  result for  an admissible  triplet  $(d,\alpha,\beta_0)_{\pmb{\pm}}$. 

\begin{theorem}
	For all $a_0$ positive  integer such  that
	$\modd{a_0}{1}{d}$, if
	$\Omega_0(\omega)$ is a cycle  of  length $\ell$  for  the admissible triplet $(d,\alpha,\beta_0)_{\pmb{\pm}}$   then $\Omega(a_0\omega)$ is also a cycle of length  $\ell$ for the admissible cycle $(d,\alpha,\beta)_{\pmb{\pm}}$ with $\beta = a_0 \beta_0$ and we have $\Omega(a_0\omega)=a_0\,\Omega_0(\omega)$.
\end{theorem}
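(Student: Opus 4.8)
The plan is to exhibit a simple semi-conjugacy between the two maps. Write $T_0$ for the mapping attached to $(d,\alpha,\beta_0)_{\pmb{\pm}}$ and $T$ for the one attached to $(d,\alpha,\beta)_{\pmb{\pm}}$ with $\beta=a_0\beta_0$, the sign $\kappa_0$ being the same for both. First I would prove the single identity $T(a_0 n)=a_0\,T_0(n)$ for every $n\in\mathbb{N}$. The verification splits according to divisibility by $d$. Since $\modd{a_0}{1}{d}$ forces $gcd(a_0,d)=1$, we have $\modd{n}{0}{d}$ if and only if $\modd{a_0 n}{0}{d}$; in the divisible case both sides equal $a_0 n/d$. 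In the non-divisible case, $\modd{a_0}{1}{d}$ also gives $\modd{\kappa_0 a_0 n}{\kappa_0 n}{d}$, hence $[\kappa_0 a_0 n]_d=[\kappa_0 n]_d$, and therefore $T(a_0 n)=(\alpha a_0 n+\beta[\kappa_0 a_0 n]_d)/d=(\alpha a_0 n+a_0\beta_0[\kappa_0 n]_d)/d=a_0\,T_0(n)$. In particular $a_0 T_0(n)\in\mathbb{N}$ whenever $T_0(n)\in\mathbb{N}$, so $T$ is well defined at least along the $T_0$-orbit of $\omega$ after scaling by $a_0$; and the congruence part of \eqref{CdtCollatztriplet} for $(d,\alpha,\beta)_{\pmb{\pm}}$ follows from $\alpha+\kappa_0\beta\equiv\alpha+\kappa_0\beta_0\equiv 0\ (\mathrm{mod}\ d)$, the inequality part being a routine check (automatic, say, when $\beta_0>0$).

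Next I would promote the one-step identity to $T^{(k)}(a_0 n)=a_0\,T_0^{(k)}(n)$ for all $k\geq 0$ by a straightforward induction: the base case $k=0$ is trivial, and the inductive step is one application of the identity above to the point $T_0^{(k)}(n)$. Applying this with $n=\omega$ and $k=\ell$, the relation $T_0^{(\ell)}(\omega)=\omega$ yields $T^{(\ell)}(a_0\omega)=a_0\omega$, so $a_0\omega$ is $T$-periodic with period a divisor of $\ell$. To see the period is exactly $\ell$, suppose $T^{(m)}(a_0\omega)=a_0\omega$ for some $1\leq m<\ell$; then $a_0 T_0^{(m)}(\omega)=a_0\omega$, and since multiplication by the positive integer $a_0$ is injective, $T_0^{(m)}(\omega)=\omega$, contradicting the minimality of $\ell$ for $\Omega_0(\omega)$. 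Consequently $\Omega(a_0\omega)=\{a_0\omega,a_0 T_0(\omega),\dots,a_0 T_0^{(\ell-1)}(\omega)\}=a_0\,\Omega_0(\omega)$ has exactly $\ell$ elements; and because $x\mapsto a_0 x$ is order-preserving, the least element of $a_0\,\Omega_0(\omega)$ is $a_0$ times the least element of $\Omega_0(\omega)$, namely $a_0\omega$, which is consistent with the notation $\Omega(a_0\omega)$.

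I do not expect a genuine obstacle here: the entire content is the two residue observations $gcd(a_0,d)=1$ and $[\kappa_0 a_0 n]_d=[\kappa_0 n]_d$, both immediate from $\modd{a_0}{1}{d}$, after which everything is formal (an induction plus an injectivity argument). The only point requiring a little care is the bookkeeping needed to assert that $(d,\alpha,\beta)_{\pmb{\pm}}$ is itself \emph{admissible} and not merely a triplet possessing the claimed cycle: the congruence in \eqref{CdtCollatztriplet} transfers immediately, and the construction of $\Omega(a_0\omega)$ shows $\#\mathrm{Cycl}(d,\alpha,\beta)_{\pmb{\pm}}\geq 1$, so the wording ``the admissible triplet $(d,\alpha,\beta)_{\pmb{\pm}}$'' in the statement is justified a posteriori.
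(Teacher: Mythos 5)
Your proposal is correct and follows essentially the same route as the paper: establish the one-step semi-conjugacy $T(a_0 n)=a_0\,T_0(n)$ from the residue identity $[\kappa_0 a_0 n]_d=[\kappa_0 n]_d$, promote it to all iterates by induction, and apply it at $n=\omega$, $k=\ell$. Your additional remarks on the exact minimality of the period (via injectivity of multiplication by $a_0$) and on transferring condition \eqref{CdtCollatztriplet} make explicit two points the paper leaves implicit, but they do not change the argument.
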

\begin{proof}
	Let $T_0$ and $T$  be the mappings  associated to $(d,\alpha,\beta_0)_{\pmb{\pm}}$ and $(d,\alpha,\beta)_{\pmb{\pm}}$, respectively, with $\beta=a_0\beta_0$, we have  \begin{equation*}\label{map_T_0}
		T_0(n)=\left\{\begin{array}{lcc}
			n/d&\mathrm{if}& \modd{n}{0}{d}\\
			\bigl(\alpha  n+\beta_0 [\kappa_0n]_d\bigr)/d&\mathrm{if}& \notmodd{n}{0}{d},\\
		\end{array}\right.\, \textrm{and}\,
		T(n)=\left\{\begin{array}{lcc}
			n/d&\mathrm{if}& \modd{n}{0}{d}\\
			\bigl(\alpha  n+\beta [\kappa_0n]_d\bigr)/d&\mathrm{if}& \notmodd{n}{0}{d}.\\
		\end{array}\right.
	\end{equation*}
	Let $n$ be any positive integer, as $\modd{a_0}{1}{d}$, then
	$[a_0 n]_d=[n]_d$, 
	$[\kappa_0a_0 n]_d=[\kappa_0n]_d$ and 
	\begin{equation*}\label{map_T_0}
		T(a_0n )=\left\{\begin{array}{lcc}
			(a_0 n)/d&\mathrm{if}& \modd{n}{0}{d}\\
			\bigl(\alpha  a_0 n+a_0\beta_0 [\kappa_0n]_d\bigr)/d&\mathrm{if}& \notmodd{n}{0}{d},\\
		\end{array}\right. 
	\end{equation*}
	It follows that $T(a_0 n)= a_0T_0(n)$, for any positive integer $n$. By induction, assume that at a step  $k$,  we have $T^{(k)}(a_0 n)= a_0T_0^{(k)}(n)$. Then, 
	$$T^{(k+1)}(a_0 n)=T(T^{(k)}(a_0n))=T(a_0T_0^{(k)}(n))=a_0T_0(T_0^{(k)}(n))=a_0 T_0^{(k+1)}(n).
	$$
	Now, assume that $(d,\alpha,\beta_0)_{\pmb{\pm}}$ has a cycle $\Omega_0(\omega)$ of length $\ell$. 
	$$\Omega_0(\omega)= (\omega\rightarrow T_0(\omega)\rightarrow T_0^{(2)}(\omega) \rightarrow\ldots\rightarrow T_0^{(\ell-2)}(\omega)\rightarrow T_0^{(\ell-1)}(\omega) \rightarrow\omega).$$
	As 
	$$a_0 \Omega_0(\omega)= (a_0\omega\rightarrow a_0T_0(\omega)\rightarrow a_0T_0^{(2)}(\omega) \rightarrow\ldots\rightarrow  a_0T_0^{(\ell-1)}(\omega) \rightarrow a_0\omega),$$
	and
	$T^{(\ell)}(a_0\omega) =a_0 T_0^{(\ell)}(\omega) =a_0 \omega$, it follows that 
	$$a_0 \Omega_0(\omega)= (a_0\omega\rightarrow T(a_0\omega)\rightarrow  T^{(2)}(a_0\omega) \rightarrow\ldots\rightarrow  T^{(\ell-1)}(a_0\omega) \rightarrow a_0\omega).$$
	Which shows that, $\Omega(a_0\omega)=a_0 \Omega_0(\omega)$ and $\Omega(a_0\omega)$ is also a cycle for $(d,\alpha,\beta)_{\pmb{\pm}}$ of length $\ell$. 
	\hfill\end{proof}
\begin{example}In this example, we apply the previous theorem. 
	Consider the  triplet $(d,\alpha,\beta)_{\pmb{+}}=(5,6,373769)_{\pmb{+}}$ with its  corresponding  
	mapping  $T:\mathbb{N}\longrightarrow \mathbb{N} $  given  by
	\begin{equation*}
		T(n)=\left\{\begin{array}{ccc}
			n/5&\mathrm{if}& \modd{n}{0}{5}\\ 
			\Bigl(6 n+373769 [ n]_5\Bigr)/5&\mathrm{if}& \notmodd{n}{0}{5}.\\
		\end{array}\right.
	\end{equation*}
	We observe that  here
	$\beta=373769=a_0\,\beta_0= 121\times 3089$ with $\beta_0=3089$ and  $a_0= \modd{121}{1}{5}$. The triplet 
	$(d,\alpha,\beta_0)_{\pmb{+}}=(5,6,3089)_{\pmb{+}}$ is an admissible   triplet   with $\beta_0=3089$ and  its  corresponding 
	mapping  $T_0:\mathbb{N}\longrightarrow \mathbb{N} $  given  by
	\begin{equation*}
		T_0(n)=\left\{\begin{array}{ccc}
			n/5&\mathrm{if}& \modd{n}{0}{5}\\ 
			\Bigl(6 n+3089 [ n]_5\Bigr)/5&\mathrm{if}& \notmodd{n}{0}{5}.\\
		\end{array}\right.
	\end{equation*} It
	has at least the following $33$ cycles of length $\ell=33$:\\
	$
	\begin{array}{ll}
		\Omega(11),  \Omega(16),  
		\Omega(17),  \Omega(21),  
		\Omega(22),  \Omega(23), 
		\Omega(26),  \Omega(27), 
		\Omega(28), \Omega( 29),  
		\Omega(31), \Omega(32),  
		\Omega(33), 
		\Omega(34), \Omega(37),  
		\Omega(38), &\\\Omega(39), 
		\Omega( 43),\Omega( 44), 
		\Omega(49), \Omega(56), 
		\Omega(62), \Omega(68), 
		\Omega(74), \Omega( 81),  
		\Omega(87), \Omega(93), 
		\Omega( 99), \Omega(106),
		\Omega(112),\Omega(118), &\\
		\Omega(124), \Omega(3089), \textrm{ see Example~\ref{Exp(5,6,3089)+}.}&\\
	\end{array}$ 
	According to the previous theorem,  the triplet  $(5,6,373769)_{\pmb{+}}$ is also a admissible triplet and it has at least the following 
	$33$ cycles of length  $\ell=33$: \\
	$
	\begin{array}{ll}
		\Omega(1331), 	\Omega(1936), 	\Omega(2057), 	\Omega(2541), 	\Omega(2662), 	\Omega(2783), 	\Omega(3146),  	\Omega(3267), 	\Omega(3388), 	\Omega(3509),	\Omega(3751), 	&\\
		\Omega(3872), 	\Omega(3993), 	\Omega(4114), 	\Omega(4477), 	\Omega(4598), 	\Omega(4719), 	\Omega(5203), 	\Omega(5324), 	\Omega(5929), 	\Omega(6776), 	\Omega(7502), 	&\\
		\Omega(8228), 	\Omega(8954), 	\Omega(9801), 	\Omega(10527), 	\Omega(11253), 	\Omega(11979), 	\Omega(12826), 	\Omega(13552), 	\Omega(14278), 	\Omega(15004), &\\	\Omega(373769).&
	\end{array}$ This example is the one given in  Item~\ref{Item-2}-Examples~\ref{Example201}.
\end{example}

%------------------------------------------------
\subsection{\textbf{The  special admissible triplets $\bf (d,d+1,-1)_{\pmb{+}}$ and $\bf (d,d+1,1)_{\pmb{-}}$.}
}\label{subsubsection:case2.1}
%------------------------------------------------
The triplets $(d,d+1,-1)_{\pmb{+}}$ and $ (d,d+1,1)_{\pmb{-}}$  satisfy the condition \eqref{CdtCollatztriplet} and the corresponding mapping  is given by
\begin{equation}\label{mapExample11a}
	T(n)=\left\{\begin{array}{lcc}
		n/d&\mathrm{if}& \modd{n}{0}{d}\\
		\dfrac{(d+1)n-\kappa_0[\kappa_0 n]_d}{d}&\mathrm{if}& \notmodd{n}{0}{d}.\\
	\end{array}\right.
\end{equation}
We have the following result.
\begin{theorem}\label{trivialcycleTheoremB}
	Let $d\geq 2$, we have:
	\begin{enumerate}
		\item  The triplet $(d,d+1,-1)_{\pmb{+}}$  has at least the  $d-1$ trivial cycles  of length $1$:
		\begin{equation}\label{TrivialCycles-(d,d+1,-1,1)}
			\Omega(r)=	(  r\rightarrow  r),\;\forall  r\in\{1,\ldots,d-1\}.
		\end{equation}
	It follows that $\# \mathrm{Cycl}(d,d+1,-1)_{\pmb{+}}\geq d-1$. 
		\item  The triplet $(d,d+1,1)_{\pmb{-}}$  has   at least the trivial cycle  of length $d$:
		\begin{equation}\label{TrivialCycles-(d,d+1,1,-1)}
			\Omega(1)=	( 1\rightarrow 2\rightarrow 3\rightarrow\ldots\rightarrow d \rightarrow  1).
		\end{equation}
		It follows that $\# \mathrm{Cycl}(d,d+1,1)_{\pmb{-}}\geq 1$. 
	\end{enumerate}
\end{theorem}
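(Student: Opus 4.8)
The plan is to verify both claims by direct computation with the mapping \eqref{mapExample11a}, checking that the stated sets are indeed cycles of the stated lengths. Throughout I will use the two case formulas for $T$ on non-multiples of $d$ derived in the proof of the first theorem: for $\kappa_0=+1$, writing $n=q_nd+[n]_d$ with $[n]_d\neq0$, one has $T(n)=\alpha q_n+\frac{\alpha+\beta}{d}[n]_d$, and for $\kappa_0=-1$ one has $T(n)=\alpha q_n+\beta+\frac{\alpha-\beta}{d}[n]_d$. For the present triplets $\alpha=d+1$, so $\frac{\alpha+\beta}{d}=1$ when $\beta=-1$ and $\frac{\alpha-\beta}{d}=1$ when $\beta=1$. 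Also, condition \eqref{CdtCollatztriplet} should be recorded first: for $(d,d+1,-1)_{\pmb{+}}$ we have $\alpha+\kappa_0\beta=d$, which is $\equiv0\pmod d$ and positive, so it holds; for $(d,d+1,1)_{\pmb{-}}$ we have $\alpha+\kappa_0\beta=d$, and the inequality $d>\frac{(\kappa_0-1)}{2}\beta d=-d$ holds, so it holds too.

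For part (1), the triplet $(d,d+1,-1)_{\pmb{+}}$: I would take any $r\in\{1,\dots,d-1\}$, so $r$ is not a multiple of $d$, $q_r=0$, and $[r]_d=r$. Then $T(r)=\alpha\cdot0+\frac{\alpha+\beta}{d}\cdot r=\frac{d}{d}\,r=r$. Hence $T(r)=r$, so $\Omega(r)=(r\rightarrow r)$ is a cycle of length $1$ (the length-$1$ condition is simply $T(r)=r$, and $1$ is minimal). Since the $r$'s are distinct, these are $d-1$ distinct fixed points, giving $\#\mathrm{Cycl}(d,d+1,-1)_{\pmb{+}}\geq d-1$.

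For part (2), the triplet $(d,d+1,1)_{\pmb{-}}$ with $\kappa_0=-1$: I would show $T(j)=j+1$ for $j\in\{1,\dots,d-1\}$ and $T(d)=1$. For $j\in\{1,\dots,d-1\}$, $j$ is not a multiple of $d$, $q_j=0$, $[j]_d=j$, so using the $\kappa_0=-1$ formula $T(j)=\alpha\cdot0+\beta+\frac{\alpha-\beta}{d}\cdot j=1+\frac{d}{d}\,j=j+1$. For $j=d$, which is a multiple of $d$, $T(d)=d/d=1$. Therefore the orbit of $1$ is exactly $1\rightarrow2\rightarrow\cdots\rightarrow d\rightarrow1$, a cycle; its length is $d$ because the $d$ elements $1,2,\dots,d$ are pairwise distinct, so no smaller period is possible. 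Hence $\#\mathrm{Cycl}(d,d+1,1)_{\pmb{-}}\geq1$.

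I do not anticipate a genuine obstacle here — the statement is essentially a one-line verification once the case formulas for $T$ are in hand, and the only points requiring a word of care are (a) confirming admissibility via \eqref{CdtCollatztriplet} so that $T$ really maps $\mathbb{N}$ to $\mathbb{N}$, and (b) noting minimality of the cycle lengths, which is immediate from distinctness of the listed elements. One should also be mildly careful to treat $[\kappa_0 n]_d$ correctly: for $\kappa_0=-1$ and $n$ not a multiple of $d$, $[-n]_d=d-[n]_d$, and indeed the computation above already incorporates this through the derived formula, so no separate argument is needed.
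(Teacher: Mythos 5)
Your proposal is correct and follows essentially the same route as the paper: a direct computation showing $T(r)=r$ for $r\in\{1,\dots,d-1\}$ in the case $\kappa_0=+1$ and $T(r)=r+1$ (plus $T(d)=1$) in the case $\kappa_0=-1$. The extra remarks on verifying condition \eqref{CdtCollatztriplet} and on minimality of the cycle lengths are harmless additions that the paper leaves implicit.
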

\begin{proof}
	\begin{enumerate}
		\item For $\kappa_0=1$  and for
		$r\in\{1,\ldots,d-1\}$, 
		$\displaystyle T(r)=\dfrac{(d+1)r-[r]_d}{d}=\dfrac{(d+1)r-r}{d}=\dfrac{dr}{d}=r$, which 
		gives rise to the cycles  $\Omega(r)$ given by \eqref{TrivialCycles-(d,d+1,-1,1)} for $r\in\{1,\ldots,d-1\}$. 
		\item For $\kappa_0=-1$ and for
		$r\in\{1,\ldots,d-1\}$, 
		$\displaystyle T(r)=\dfrac{(d+1)r+[-r]_d}{d}=\dfrac{(d+1)r+(d-r)}{d}$. Then, $T(r)=\dfrac{d(r+1)}{d}=r+1$.
		It follows that $T^{(d)}(1)=1$ which gives rise to the cycle  $\Omega(1)$ given by \eqref{TrivialCycles-(d,d+1,1,-1)}. 
	\end{enumerate}
	\hfill\end{proof}

Let us remark that, the classical Collatz case is obtained for $d=2$ with $\kappa_0=-1$.  The trivial cycle 
$\Omega(1)$ is in the form \eqref{TrivialCycles-(d,d+1,1,-1)}: $\Omega(1) =(1\rightarrow 2 \rightarrow 1)$.  

it appears that  for all  $d\geq 2$,   $(d,d+1,-1)_{\pmb{+}}$ and $ (d,d+1,1)_{\pmb{-}}$ are  strongly  admissible  triplets  and their  respective  orders satisfy  $d-1\leq  \#\mathrm{Cycl}(d,d+1,-1)_{\pmb{+}}$ for $\kappa_0=1$ and $1\leq \#\mathrm{Cycl}(d,d+1,1)_{\pmb{-}}$ for  $\kappa_0=-1$. Their trivial cycles are:  
\begin{itemize}
	\item For $\kappa_0=1$,     $\Omega(r)=(r\rightarrow r)$ for all $r\in\{1,\ldots,d-1\}$ .
	\item For $\kappa_0=-1$, $\Omega(1)=	( 1\rightarrow 2\rightarrow 3\rightarrow\ldots\rightarrow d\rightarrow  1)$.
\end{itemize}

%------------------------------------------------
\subsection{
	\textbf{ The admissible triplets $\bf (d,2d-1,1)_{\pmb{+}}$  evoking the Mersene  numbers for $\bf d=2^{p-1}$, $\bf p\geq 2$.} }\label{subsection:case3}
%------------------------------------------------

The  triplet 
$ (d,2d-1,1)_{\pmb{+}}$ .satifiens the condition \eqref{CdtCollatztriplet}.
Its corresponding mapping  is given by
\begin{equation}\label{map_case3}
	T(n)=\left\{\begin{array}{lcc}
		n/d&\mathrm{if}& \modd{n}{0}{d}\\
		\dfrac{(2d-1)n+[n]_d}{d}&\mathrm{if}& \notmodd{n}{0}{d}.\\
	\end{array}\right.
\end{equation}
An interesting case may be obtained by setting $d= 2^{p-1}$ where $p\geq 2$. This leads to the  triplet 
$(2^{p-1},2^{p}-1,1)_{\pmb{+}}$ with 
$\alpha =2^p-1$ as a Mersenne number, for   $p\geq 2$.  We recover the classical Collatz  problem by choosing $p=2$.
We have the following result.
\begin{theorem}\label{trivialcycleTheoremC} 
	Let $d_p= 2^{p-1}$ for $p\geq 2$ and let the integers $\alpha_p=2d_p-1=2^{p}-1$, $\beta=1$ and $\kappa_0=1$, then  $(2^{p-1},2^{p}-1,1)_{\pmb{+}}$ is an admissible triplet, it has at least the following trivial cycle of length $p$:
	\begin{equation}\label{TrivialCycles2-case3}
		\Omega(1)=	( 1\rightarrow 2\rightarrow 2^2\rightarrow\ldots\rightarrow 2^{p-1}\rightarrow  1),
	\end{equation}
	It follows that, 
	$\# \mathrm{Cycl}
	(2^{p-1},2^{p}-1,1)_{\pmb{+}}
	\geq 1$. 
\end{theorem}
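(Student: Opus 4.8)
The plan is to verify directly that the orbit displayed in \eqref{TrivialCycles2-case3} is a genuine cycle, by computing $T$ on each of its elements. First I would check the admissibility condition \eqref{CdtCollatztriplet}: here $d=2^{p-1}$, $\alpha=2^p-1$, $\beta=1$, $\kappa_0=+1$, so $\alpha+\kappa_0\beta = 2^p = 2\cdot 2^{p-1}$, which is $\equiv 0\pmod{d}$ (indeed $d$ divides $2d$), and the inequality $\alpha+\kappa_0\beta>\frac{(\kappa_0-1)}{2}\beta d$ reads $2^p>0$, which holds. Hence $T$ is well defined on $\mathbb{N}$ and it makes sense to speak of its cycles.

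Next I would walk through the elements $1,2,2^2,\ldots,2^{p-1}$. For each $j$ with $0\le j\le p-2$, the element $n=2^j$ satisfies $1\le 2^j\le 2^{p-2}<d$, hence $n$ is not a multiple of $d$, $[n]_d = 2^j$, and using the second branch of \eqref{map_case3},
\[
T(2^j)=\dfrac{(2^p-1)2^j+2^j}{2^{p-1}}=\dfrac{2^p\cdot 2^j}{2^{p-1}}=2^{j+1}.
\]
So the chain $1\to 2\to 2^2\to\cdots\to 2^{p-1}$ is exactly the trajectory. Finally, for the last element $n=2^{p-1}=d$, we have $\modd{n}{0}{d}$, so the first branch applies and $T(2^{p-1})=2^{p-1}/d = 1$. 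This closes the loop, so $T^{(p)}(1)=1$ and every element $2^j$ returns to itself after $p$ steps, giving the cycle $\Omega(1)$ of \eqref{TrivialCycles2-case3}. To conclude that the cycle has length exactly $p$ (not a proper divisor), I would note that the $p$ elements $1,2,\ldots,2^{p-1}$ are pairwise distinct, so the orbit genuinely has $p$ elements and $\#\Omega(1)=p$; by definition the existence of this cycle gives $\#\mathrm{Cycl}(2^{p-1},2^{p}-1,1)_{\pmb{+}}\ge 1$, so the triplet is admissible.

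There is essentially no obstacle here: this is a routine verification theorem of the same flavor as Theorems \ref{trivialcycleTheoremB} and \ref{trivialcycleTheoremC}'s predecessors, and the only point requiring a moment's care is the bookkeeping that for $j\le p-2$ one has $2^j<d=2^{p-1}$ so that $[2^j]_d=2^j$ and the non-divisible branch of \eqref{map_case3} is the one that fires, while for $j=p-1$ the divisible branch fires instead. I would also remark, matching the paper's style, that the case $p=2$ recovers the classical Collatz cycle $\Omega(1)=(1\to 2\to 1)$.
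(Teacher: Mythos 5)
Your proposal is correct and follows essentially the same route as the paper: a direct computation showing $T(2^k)=2^{k+1}$ for $0\le k\le p-2$, hence $T^{(p)}(1)=1$. You are in fact slightly more complete than the paper's own proof, since you also verify condition \eqref{CdtCollatztriplet}, make explicit the final step $T(2^{p-1})=2^{p-1}/d=1$ via the divisible branch, and note the distinctness of the $p$ elements so that the cycle length is exactly $p$.
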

\begin{proof}
	Let $p\geq 2$, for any integer $k$ with  $0\leq k\leq p-2$, we have
	$$T(2^k)=\dfrac{(2^p-1) 2^k +[2^k]_{d_p}}{d_{p}}=\dfrac{(2^{p}-1)2^k +2^k}{2^{p-1}}=2^{k+1}.$$
	It follows that $T^{(p)}(1)=1$ which gives rise to the trivial cycle \eqref{TrivialCycles2-case3}.
\end{proof}

Let us now  formulate  the 
following  conjecture.
\begin{conjecture}\label{ConjectureC} For all $p\geq 2$, $(2^{p-1},2^{p}-1,1)_{\pmb{+}}$ is an admissible triplet  of order one.
	It is  strongly admissible   only for  $p=2$ and it is a  weakly admissible   for all $p\geq 3$.  Its  trivial cycle   of  length $p$ is:
	$\Omega(1)=	( 1\rightarrow 2\rightarrow 2^2\rightarrow\ldots\rightarrow 2^{p-1}\rightarrow  1)$.
\end{conjecture}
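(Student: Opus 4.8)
The plan is to split Conjecture~\ref{ConjectureC} into three assertions and attack them separately: (A) the triplet $(2^{p-1},2^{p}-1,1)_{\pmb{+}}$ has exactly one cycle for every $p\ge 2$; (B) for $p=2$ there is no divergent trajectory; (C) for every $p\ge 3$ there is at least one divergent trajectory. The easy half of (A) is already in hand: Theorem~\ref{trivialcycleTheoremC} produces the cycle $\Omega(1)=(1\rightarrow 2\rightarrow\cdots\rightarrow 2^{p-1}\rightarrow 1)$, and its length is exactly $p$ because the iterates $1,2,2^{2},\dots,2^{p-1}$ are pairwise distinct with $T(2^{p-1})=1$; hence $\#\mathrm{Cycl}(2^{p-1},2^{p}-1,1)_{\pmb{+}}\ge 1$ unconditionally, and only the non-existence of further cycles and the description of $G_{\infty}$ remain.

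For the ``order one'' part I would run the B\"ohm--Sontacchi / Eliahou-type cycle-equation argument specialised to the map \eqref{map_case3}. A hypothetical cycle of length $L$ meeting the affine branch $n\mapsto((2d-1)n+[n]_{d})/d$ exactly $b$ times and the contracting branch $n\mapsto n/d$ exactly $a=L-b$ times satisfies, after composing $T$ around it and clearing denominators, an identity $\bigl(d^{L}-(2d-1)^{b}\bigr)\,\omega=S$, where $\omega$ is the least cycle element, $d=2^{p-1}$, and $S$ is a positive integer admitting an explicit bound polynomial in $d$ and $L$ and geometric in $(2d-1)^{b}$. Positivity of $S$ forces $d^{L}>(2d-1)^{b}$, i.e. $(p-1)L>b\log_{2}(2^{p}-1)$, which constrains $b/L$ to lie near the convergents of the irrational $\log(2^{p}-1)/((p-1)\log 2)$; combining this with the explicit upper bound on $\omega$ and the lower bound $\omega\ge 1$ (respectively $\omega\ge d$ for a non-trivial cycle) eliminates all cycles of bounded length by a finite search --- precisely what the two algorithms of Section~4 are designed to perform. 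I expect this to be the decisive obstacle: for $p=2$ the residual statement is exactly the open problem of non-existence of non-trivial Collatz cycles, and for general $p$ it is its verbatim analogue, so a complete proof is out of reach and one can only certify ``order one'' for cycle lengths up to an arbitrarily large, computation-dependent bound.

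Assertion (B) for $p=2$ is literally the classical $3n+1$ conjecture (Conjecture~\ref{CollatzConjecture}) and is taken as the known hard kernel of the problem. For (C), $p\ge 3$, the heuristic that drives the expected divergence is that on the affine branch $T$ multiplies by roughly $(2d-1)/d\approx 2$ while the contracting branch is hit with asymptotic density $1/d$, so the mean multiplicative drift per step is about $(2d-1)^{(d-1)/d}/d$, a quantity that is $<1$ for $p=2$ (whence the Collatz-type convergence) but $>1$ for every $p\ge 3$; hence a typical orbit should grow without bound and $G_{\infty}\neq\emptyset$, which is what ``not strongly admissible for $p\ge 3$'' asserts. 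To turn this into a proof I would search for an explicit self-similar family of starting integers --- numbers with a prescribed base-$d$ digit pattern --- on which a fixed block of iterates of $T$ acts as multiplication by a constant $>1$ while mapping the family into itself, so that geometric growth along that subsequence can be read off directly; the computation in the proof of Theorem~\ref{trivialcycle3.4} (unboundedness of $s(n)$ when $\nu_{1}>1$) shows that explicit unbounded orbits of this kind do occur for triplets of the present shape, and the task is to adapt that construction to $(2^{p-1},2^{p}-1,1)_{\pmb{+}}$. Here too the obstruction is the familiar one --- exhibiting even a single provably divergent orbit of a Collatz-type map is beyond current techniques in general --- so this direction, like (A) and (B), is expected to remain conjectural.
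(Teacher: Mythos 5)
The statement you were asked to prove is a conjecture, and the paper offers no proof of it beyond Theorem~\ref{trivialcycleTheoremC}, which establishes exactly the provable part (existence of the trivial cycle $\Omega(1)$ of length $p$) by the same computation $T(2^k)=2^{k+1}$, $T(2^{p-1})=1$ that you give. Your decomposition into (A), (B), (C), your observation that assertion (B) for $p=2$ is literally Conjecture~\ref{CollatzConjecture} and that the remaining assertions are at least as hard, and your drift heuristic $(2d-1)^{(d-1)/d}/d$ being $<1$ for $d=2$ but $>1$ for $d=2^{p-1}$ with $p\geq 3$ together constitute an accurate assessment that matches everything the paper actually establishes.
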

If the last conjecture is true, then  for  $p\geq 3$:
$\#\mathrm{Cycl}(2^{p-1},2^{p}-1,1)_{\pmb{+}}=1$ and $\mathbb{N}=G(1)\cup  G(\infty) $.  The triplet $(2^{p-1},2^{p}-1,1)_{\pmb{+}}$ may have divergent trajectories.

\subsection{\textbf{The triplet
		$\bf (2^p+2^q,2^p+2^{q+1},2^p)_{\pmb{+}}$ with $\bf p\geq q\geq 0$.}}
	For more detail on this case see \cite{Bouhamidi_conj10.6.4}. Consider the mapping $T:=T_{p,q}$, for $p\geq q\geq 0$, given  by
	\begin{equation}\label{mapMainConjecture_dpdq}
		T_{p,q}(n)=\left\{\begin{array}{lcc}
			n/(2^p+2^q)&\mathrm{if}& \modd{n}{0}{(2^p+2^q)}\\
			\dfrac{(2^p+2^{q+1})n+2^p[ n]_{p,q}}{2^p+2^q}&\mathrm{if}& \notmodd{n}{0}{(2^p+2^q)},\\
		\end{array}\right.
	\end{equation}
	here $[\,\,]_{p,q}=[\,\,]_{d_{p,q}}$ stands for the remainder in the Euclidean division by $d_{p,q}=2^p+2^q$.   We observe
	that the case  $p =q=0$  gives the triplet $(2,3,1)_+$  of   the classical Collatz case  \eqref{CollatzMap}   and the case $p=3$ and $q=1$ gives the triplet $(10,12,8)_+$ which is exactly the triplet of our conjecture \ref{Myconjecture}.   For  $p\geq q\geq 1$,  we may simplify the mapping $T_{p,q}$   to
	\begin{equation}\label{mapMainConjecture_dpdq__Simplified}
		T_{p,q}(n)=\left\{\begin{array}{lcc}
			n/(2^p+2^q)&\mathrm{if}& \modd{n}{0}{(2^p+2^q)}\\
			\dfrac{(2^{p-q}+2)n+2^{p-q}[ n]_{p,q}}{2^{p-q}+1}&\mathrm{if}& \notmodd{n}{0}{(2^p+2^q)}.\\
		\end{array}\right.
	\end{equation}
We have the main conjecture.
	\begin{conjecture}\label{MainConjecture2p2q}
		Let  $\mathbb{E}=\{(1,0),(2,1),(2,2),(3,0),(4,0),(5,2),(6,2),(7,0)\}\subset \mathbb{N}_0\times  \mathbb{N}_0$.
		
		For all   integer $p\geq 0$ and for all integer $q$ with $0 \leq q\leq p$: The triplet
		$(2^p+2^q, 2^p+2^{q+1}, 2^p)_+$	  
		is  strongly  admissible. Its  trivial cycle  is $\Omega(2^{p-q})$ of length $2^{p-q}+q+1$ given by:
		\begin{equation}\label{TrivialCycle2p2q}
			\Omega(2^{p-q})=\Bigl(2^{p-q}\rightarrow  2^{p-q+1}\rightarrow
			\cdots  2^{p-1}\rightarrow 
			2^{p}\rightarrow
			2\cdot2^{p}\rightarrow
			3\cdot2^{p}\rightarrow
			\ldots\rightarrow (2^{p-q}+1)\cdot2^{p}\rightarrow 2^{p-q}\Bigr).
		\end{equation}
		Furthermore.
		\begin{enumerate}
			\item 	If $(p,q)\not\in \mathbb{E}$, then  the triplet 
			$(2^p+2^q, 2^p+2^{q+1}, 2^p)_+$  is of order one, its unique trivial cycle is  the one given by 
			\eqref{TrivialCycle2p2q}. So, for all integer $n\geq 1$,  there exists an integer $k\geq 0$ such that $T_{p,q}^{(k)}(n)=2^{p-q}$.
			\item  If $(p,q)\in \mathbb{E}$, then:
			\begin{enumerate}
				\item  If $(p,q)\not=(5,2)$, then  the triplet  
				$(2^p+2^q, 2^p+2^{q+1}, 2^p)_+$   is  of order two.   Its first trivial cycle is   the one given by 
				\eqref{TrivialCycle2p2q} and its second trivial cycle is  $\Omega(\omega_{p,q})$ the one starting  by $\omega_{p,q}$ given bellow. So, for all $n\geq 1$ there exists an integer $k\geq 0$ such that $T^{(k)}_{p,q}(n)\in \{2^{p-q},\omega_{p,q}\}$. 
				\item  If $(p,q)=(5,2)$,  the triplet $(36,40,30)_+$ is of order three.  Its first  trivial cycle is  given by 	\eqref{TrivialCycle2p2q}  and is $\Omega(8)=\bigl(8\rightarrow16\rightarrow32\rightarrow64\rightarrow96\rightarrow128\rightarrow160\rightarrow192\rightarrow224\rightarrow256\rightarrow288\rightarrow8\bigr)$. Its second and third cycles are
				$\Omega(\omega^{(1)}_{5,2})$ and $\Omega(\omega^{(2)}_{5,2})$ starting  by $\omega^{(1)}_{5,2}=76200$  and  $\omega^{(1)}_{5,2}=87176$  of length $70$ and  $35$, receptively. So, for all integer $n\geq 1$,  there exists an integer $k\geq 0$ such that 
				$T_{5,2}^{(k)}(n)\in\{8,76200,87176\}$.
			\end{enumerate}
		\end{enumerate}			
		The cycles  $\Omega(\omega_{1,0})$, $\Omega(\omega_{2,1})$, $\Omega(\omega_{2,2})$, $\Omega(\omega_{3,0})$, $\Omega(\omega_{4,0})$, $\Omega(\omega^{(1)}_{5,2})$, $\Omega(\omega^{(2)}_{5,2})$, $\Omega(\omega_{6,2})$ and $\Omega(\omega_{7,0})$
		starting by $\omega_{1,0}=14$, $\omega_{2,1}=74$,  $\omega_{2,2}=67$,  $\omega_{3,0}=280$, $\omega_{4,0}=1264$,
		$\omega^{(1)}_{5,2}=76200$, $\omega^{(2)}_{5,2}=87176$, $\omega_{6,2}=1264$ and $\omega_{7,0}=3027584$, respectively and of length
		$9$, $7$, $6$, $21$,  $49$, $70$, $35$, $69$, $630$  respectively are given as following:
		$$\begin{array}{ll}
			\Omega(\omega_{1,0})=&
			\bigl(14\rightarrow 20\rightarrow  28\rightarrow 38\rightarrow 52\rightarrow 70\rightarrow  94 \rightarrow 126\rightarrow 42\rightarrow 14\bigr),\\
			\Omega(\omega_{2,1})=&	\bigl(74\rightarrow 100\rightarrow  136\rightarrow 184\rightarrow 248\rightarrow 332\rightarrow  444 \rightarrow 74 \bigr),\\
			\Omega(\omega_{2,2})=&
			\bigl(67\rightarrow102\rightarrow156\rightarrow236\rightarrow356\rightarrow536\rightarrow67\bigr),\\
			\Omega(\omega_{3,0})=&
			\bigl(280\rightarrow 312\rightarrow 352\rightarrow 392\rightarrow 440\rightarrow 496\rightarrow 552\rightarrow 616\rightarrow 688\rightarrow
			768\rightarrow\\
			& 856\rightarrow952\rightarrow1064\rightarrow 1184\rightarrow 1320\rightarrow 1472\rightarrow 1640\rightarrow
			1824\rightarrow 2032\rightarrow \\
			&2264 \rightarrow 2520 \rightarrow280),\\
			\Omega(\omega_{4,0})=&\bigl(
			1264\rightarrow 1344\rightarrow 1424\rightarrow 1520\rightarrow 1616\rightarrow 1712\rightarrow 1824\rightarrow 
			1936\rightarrow 2064\rightarrow\\
			&2192\rightarrow 2336\rightarrow  2480\rightarrow 2640\rightarrow 2800\rightarrow
			2976 \rightarrow 3152\rightarrow3344\rightarrow 3552\rightarrow 3776\\
			&\rightarrow 4000\rightarrow4240\rightarrow 
			4496\rightarrow
			4768\rightarrow 5056\rightarrow 5360\rightarrow 5680\rightarrow 6016\rightarrow 6384\\
			&\rightarrow6768\rightarrow 7168 \rightarrow 7600\rightarrow 8048\rightarrow 8528\rightarrow
			9040\rightarrow 9584\rightarrow 
			10160\rightarrow 10768	\\
			& \rightarrow 11408\rightarrow 12080\rightarrow 12800\rightarrow 13568\rightarrow 
			14368\rightarrow 15216\rightarrow 16112\rightarrow 17072\rightarrow	\\
			&  18080\rightarrow 19152\rightarrow
			20288\rightarrow 21488\rightarrow 1264\bigr),\\
			\Omega(\omega^{(1)}_{5,2})=&
			\bigl(76200\rightarrow84688\rightarrow94112\rightarrow104576\rightarrow116224\rightarrow129152\rightarrow143520\rightarrow159488\\
			&\rightarrow177216\rightarrow196928\rightarrow218816\rightarrow243136\rightarrow270176\rightarrow300224\rightarrow333600\rightarrow\\&370688\rightarrow411904\rightarrow457696\rightarrow508576\rightarrow565088\rightarrow627904\rightarrow697696\rightarrow775232\\&\rightarrow861376\rightarrow957088\rightarrow1063456\rightarrow1181632\rightarrow1312928\rightarrow1458816\rightarrow1620928\\&\rightarrow1801056\rightarrow2001184\rightarrow2223552\rightarrow2470624\rightarrow2745152\rightarrow3050176\rightarrow3389088\\&\rightarrow3765664\rightarrow4184096\rightarrow4649024\rightarrow5165600\rightarrow5739584\rightarrow6377344\rightarrow7085952\\&\rightarrow196832\rightarrow218720\rightarrow243040\rightarrow270048\rightarrow300064\rightarrow333408\rightarrow370464\rightarrow\\&411648\rightarrow457408\rightarrow508256\rightarrow564736\rightarrow627488\rightarrow697216\rightarrow774688\rightarrow860768\\&\rightarrow956416\rightarrow1062688\rightarrow1180768\rightarrow1311968\rightarrow1457760\rightarrow1619744\rightarrow1799744\\&\rightarrow1999744\rightarrow2221952\rightarrow2468864\rightarrow2743200\rightarrow76200\bigr),\\
			\Omega(\omega^{(2)}_{5,2})=&
			\bigl(87176\rightarrow96880\rightarrow107648\rightarrow119616\rightarrow132928\rightarrow147712\rightarrow164128\rightarrow182368\\& \rightarrow202656\rightarrow225184\rightarrow250208\rightarrow278016\rightarrow308928\rightarrow343264\rightarrow381408\\&\rightarrow423808\rightarrow470912\rightarrow523264\rightarrow581408\rightarrow646016\rightarrow717824\rightarrow797600\\&\rightarrow886240\rightarrow984736\rightarrow1094176\rightarrow1215776\rightarrow1350880\rightarrow1500992\rightarrow1667776\\&\rightarrow1853088\rightarrow2059008\rightarrow2287808\rightarrow2542016\rightarrow2824480\rightarrow3138336\rightarrow87176\bigr),\\
			\Omega(\omega_{6,2})=&\bigl(1264\rightarrow1376\rightarrow1472\rightarrow1600\rightarrow1728\rightarrow1856\rightarrow1984\rightarrow2112\rightarrow2240\rightarrow2432\\
			&\rightarrow2624\rightarrow2816\rightarrow3008\rightarrow3200\rightarrow3392\rightarrow3648\rightarrow3904\rightarrow4160\rightarrow4416\rightarrow4736\\&\rightarrow5056\rightarrow5376\rightarrow5696\rightarrow6080\rightarrow6464\rightarrow6848\rightarrow7296\rightarrow7744\rightarrow8256\rightarrow8768\\&\rightarrow9344\rightarrow9920\rightarrow10560\rightarrow11200\rightarrow11904\rightarrow12608\rightarrow13376\rightarrow14208\rightarrow15104\\&\rightarrow16000\rightarrow16960\rightarrow17984\rightarrow19072\rightarrow20224\rightarrow21440\rightarrow22720\rightarrow24064\rightarrow\\&25536\rightarrow27072\rightarrow28672\rightarrow30400\rightarrow32192\rightarrow34112\rightarrow36160\rightarrow38336\rightarrow40640\\&\rightarrow43072\rightarrow45632\rightarrow48320\rightarrow51200\rightarrow54272\rightarrow57472\rightarrow60864\rightarrow64448\rightarrow\\&68288\rightarrow72320\rightarrow76608\rightarrow81152\rightarrow85952\rightarrow1264\bigr),\\
			\Omega(\omega_{7,0})=&\bigl(
			3027584\rightarrow3051136\rightarrow3074816\rightarrow3098752\rightarrow 3122816\rightarrow3147136\rightarrow3171584\rightarrow\\&3196288\rightarrow3221120\rightarrow3246208\rightarrow3271424\rightarrow3296896\rightarrow3322496\rightarrow3348352\rightarrow\ldots\ldots\\&
			\ldots\rightarrow
			367160576\rightarrow370006784\rightarrow372875136\rightarrow375765760\rightarrow378678784\rightarrow381614336\rightarrow\\& 384572672\rightarrow387553920\rightarrow390558336\rightarrow3027584\bigr).
		\end{array}$$		
	\end{conjecture}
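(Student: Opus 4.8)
The plan is to split Conjecture~\ref{MainConjecture2p2q} into two parts of completely different character. The \emph{constructive} part — that $\Omega(2^{p-q})$, and for $(p,q)\in\mathbb{E}$ the extra cycles listed at the end of the statement, really are cycles of the stated lengths — is a finite, purely algebraic verification entirely in the spirit of Theorems~\ref{trivialcycleTheoremA_OLD}--\ref{trivialcycleTheoremC}, and I would carry it out in full. The \emph{global} part — that these are the \emph{only} cycles (order one, two or three) and that $G_\infty(2^p+2^q,2^p+2^{q+1},2^p)_{\pmb{+}}=\emptyset$ — contains the classical Collatz conjecture as the single instance $(p,q)=(0,0)$, hence cannot be settled by the methods of this paper; for it I would only assemble the available evidence.

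For the constructive part, write $\alpha=2^p+2^{q+1}$, $\beta=2^p$ and $d=2^p+2^q$; then $\alpha+\beta=2d$, so condition~\eqref{CdtCollatztriplet} holds and $T_{p,q}$ is well defined. The loop~\eqref{TrivialCycle2p2q} is then traversed in three segments, all computed uniformly in $p\ge q\ge 0$ from $T_{p,q}(n)=(\alpha n+\beta[n]_d)/d$ together with the congruence $2^p\equiv-2^q\pmod d$. First, for $0\le j\le p-1$ one has $2^j<d$, so $[2^j]_d=2^j$ and
$$T_{p,q}(2^j)=\frac{(2^p+2^{q+1})2^j+2^p\,2^j}{2^p+2^q}=\frac{2^{j+1}(2^p+2^q)}{2^p+2^q}=2^{j+1},$$
so $2^{p-q}$ climbs to $2^p$ in $q$ steps. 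Second, for $1\le k\le 2^{p-q}$ one has $k\,2^p\equiv-k\,2^q\pmod d$, the reduction being $d-k\,2^q\in[2^q,2^p]$, so $[k\,2^p]_d=d-k\,2^q$ and
$$T_{p,q}(k\,2^p)=\frac{(2^p+2^{q+1})k\,2^p+2^p(d-k\,2^q)}{d}=\frac{2^p\bigl(k\,d+d\bigr)}{d}=(k+1)2^p,$$
so $2^p$ climbs to $(2^{p-q}+1)2^p$ in $2^{p-q}$ steps. Third, $(2^{p-q}+1)2^p=2^{p-q}\,d$ is divisible by $d$, so $T_{p,q}\bigl((2^{p-q}+1)2^p\bigr)=2^{p-q}$ and the loop closes after $q+2^{p-q}+1$ steps; the $q+2^{p-q}+1$ values visited are pairwise distinct with least element $2^{p-q}$, so this is a cycle of smallest element $2^{p-q}$ and length $2^{p-q}+q+1$, i.e.~\eqref{TrivialCycle2p2q}. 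For the eight pairs of $\mathbb{E}$ — together with the third cycle when $(p,q)=(5,2)$ — I would take the explicit orbits displayed in the statement and check, arrow by arrow, that $T_{p,q}$ is applied correctly and that each orbit returns to its start: a bounded, mechanical verification.

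The global part is the obstacle, and I do not expect to remove it. ``Order one'' (resp.\ two, three) means that the exponential Diophantine system encoding a hypothetical further cycle — a vector of residues $[\,\cdot\,]_{d}$ along the orbit, together with the requirement that the associated affine composition of the two branches of $T_{p,q}$ fix the cycle element — has no positive-integer solution outside the exhibited ones; already for $(p,q)=(0,0)$ this is the open statement that $3n+1$ admits no non-trivial cycle, and ``$G_\infty=\emptyset$'' for that pair is the full Collatz conjecture. What I can offer in support, and would present as such, is threefold: (i) the lower bounds and the two algorithms of Section~4, which force any non-trivial cycle of a given triplet to have length above an explicit, rapidly growing threshold; (ii) exhaustive computation verifying, for each small $(p,q)$, that every $n$ below a large bound reaches the asserted finite set of cycle elements and that no cycle shorter than the Section~4 threshold exists; and (iii) the standard Matthews--Watts-type heuristic that $T_{p,q}$ is on average contracting, so that a generic trajectory is drawn towards the finitely many cycles and divergence is implausible. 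Accordingly I would prove the cycle-existence statement in full generality, verify the finite exceptional data, and flag the exact-order and no-divergence assertions as conjectural, exactly as the statement does.
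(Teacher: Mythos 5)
The statement you are asked about is a \emph{conjecture}, and the paper offers no proof of it: its only support is the numerical evidence reported in Section~5.1 (verification for $0\le q\le p\le 25$ and $n\le 10^7$, pushed to $5\times 10^{11}$ for $(p,q)=(3,1)$) together with a pointer to the companion preprint \cite{Bouhamidi_conj10.6.4}. Your decomposition is therefore the right one, and in fact you supply something the paper itself does not write down: a proof that $\Omega(2^{p-q})$ as displayed in \eqref{TrivialCycle2p2q} really is a cycle of length $2^{p-q}+q+1$. I checked your three segments and they are correct. Well-definedness follows from $\alpha+\beta=2^{p+1}+2^{q+1}=2d$, so \eqref{CdtCollatztriplet} holds; for $p-q\le j\le p-1$ one has $0<2^j<d$, hence $[2^j]_d=2^j$ and $T_{p,q}(2^j)=2^{j+1}$; for $1\le k\le 2^{p-q}$ one has $0<k2^q\le 2^p<d$, hence $[k2^p]_d=d-k2^q$ and, using $2^p+2^{q+1}-2^q=d$, $T_{p,q}(k2^p)=(k+1)2^p$; finally $(2^{p-q}+1)2^p=2^{p-q}d$ closes the loop. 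The listed elements are pairwise distinct with minimum $2^{p-q}$, and the degenerate cases $q=0$ (empty first segment) and $p=q$ (cycle through all powers $1,2,\dots,2^{p+1}$) are covered by the same formulas. Your assessment of the global part is also accurate: strong admissibility and the exact order statements contain the classical Collatz conjecture as the instance $(p,q)=(0,0)$, so they cannot be established by the tools of Sections~4--5; the paper treats them exactly as you propose to, namely as conjectural claims backed by the lower-bound machinery of Theorems~\ref{Main_theorem_4.3}--\ref{Main_theorem_4.4} and by exhaustive computation. The only caveat is that your ``arrow by arrow'' verification of the nine exceptional cycles is promised rather than performed, but that is a finite mechanical check and does not affect the soundness of the argument.
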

	
	For the special case, where $p=q$ we obtain the triplet 
		$(d_p,\alpha_p,\beta_p)_+:=(2^{p+1},3\times 2^p,2^p)_+$.
The corresponding mapping $T:=T_{p}$, for $p\geq 0$, is given  by
	\begin{equation}\label{mapMainConjecture_dpSimple}
		T_{p}(n)=\left\{\begin{array}{lcc}
			n/2^{p+1}&\mathrm{if}& \modd{n}{0}{2^{p+1}}\\
			\dfrac{3n+[ n]_{p}}{2}&\mathrm{if}& \notmodd{n}{0}{2^{p+1}},\\
		\end{array}\right.
	\end{equation}
	here $[\,\,]_{p}=[\,\,]_{d_{p}}$ stands for the remainder in the Euclidean division by $d_{p}=2^{p+1}$.  It is clear that the classical Collatz mapping is obtained for $p=0$.   From Conjecture~\ref{MainConjecture2p2q},  we deduce the following special case conjecture.

	\begin{conjecture}\label{MainConjecture2p}
		\begin{itemize}
			\item   For all $p\geq 0$ with $p\not=2$  the triplet  $(2^{p+1},3\times 2^{p},2^{p})_+$, associated to the mapping $T_p$ given by \eqref{mapMainConjecture_dpSimple}, is a strongly  admissible  triplet of order one, its unique  trivial cycle of length $p+2$ is:
			$$\Omega(1)=\bigl(1\rightarrow  2\rightarrow 2^2
			\rightarrow\cdots \rightarrow 
			2^{p+1}\rightarrow 1\bigr).$$
			For all integer $n\geq 1$, there exists an integer $k\geq 0$ such that $T^{(k)}_p(n)=1$.\\
			\item For $p=2$, the corresponding strongly admissible triplet $(8,12,4)_+$ is of order  $2$, its first trivial cycle is $\Omega(1)=\bigl(1\rightarrow2\rightarrow4\rightarrow8\rightarrow1\bigr)$  and its second trivial cycle is 
			$\Omega(67)=
			\bigl(67\rightarrow102\rightarrow156\rightarrow236\rightarrow356\rightarrow536\rightarrow67\bigr)$. For all $n\geq 1$, there exists an integer $k\geq 0$ such that $T_2^{(k)}(n)\in\{1,67\}$.
		\end{itemize}
	\end{conjecture}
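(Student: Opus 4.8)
The statement contains the Collatz conjecture verbatim (it is the case $p=0$), so the plan splits cleanly into an elementary skeleton that one proves outright and a hard part that one can only reduce, support heuristically, or verify computationally up to a large bound.

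\textbf{Step 1 (the trivial cycle --- elementary).} First I would check the map \eqref{mapMainConjecture_dpSimple} on powers of $2$ directly, exactly as in the proof of Theorem~\ref{trivialcycleTheoremC}. For $0\le k\le p$ one has $2^k\not\equiv 0\ (\mathrm{mod}\ 2^{p+1})$ and $[2^k]_{p}=2^k$, hence $T_p(2^k)=(3\cdot 2^k+2^k)/2=2^{k+1}$, while $T_p(2^{p+1})=2^{p+1}/2^{p+1}=1$. Thus $\Omega(1)=(1\to 2\to\cdots\to 2^{p+1}\to 1)$ is a cycle of length $p+2$ and $(2^{p+1},3\cdot 2^{p},2^{p})_+$ is admissible with $\#\mathrm{Cycl}\ge 1$. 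For $p=2$ I would additionally iterate $T_2$ starting from $67$ to confirm $\Omega(67)=(67\to 102\to 156\to 236\to 356\to 536\to 67)$, a $6$-cycle disjoint from $\Omega(1)$, whence $\#\mathrm{Cycl}(8,12,4)_+\ge 2$. (Equivalently, the whole statement is the specialisation $q=p$ of Conjecture~\ref{MainConjecture2p2q}, since $(2^p+2^q,2^p+2^{q+1},2^p)=(2^{p+1},3\cdot 2^{p},2^{p})$ when $q=p$.)

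\textbf{Step 2 (what remains).} Two further facts are needed: (a) that there is \emph{no other} cycle, so the order is exactly $1$ (resp. exactly $2$ when $p=2$); and (b) that $G_\infty(2^{p+1},3\cdot 2^{p},2^{p})_+=\emptyset$, i.e. no divergent trajectory. Granting (a) and (b), the partition $\mathbb{N}=G_{\omega_1}\cup\cdots\cup G_\infty$ of Section~\ref{SectionGeneralization} collapses to $\mathbb{N}=G_1$ (resp. $\mathbb{N}=G_1\cup G_{67}$), which is precisely the assertion that every orbit reaches $\{1\}$ (resp. $\{1,67\}$). Since $p=0$ recovers the Collatz map $C$, statements (a) and (b) contain the $3n+1$ problem, so no elementary proof can be expected; the realistic target is a reduction of the general-$p$ case to $p=0$, together with partial results.

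\textbf{Step 3 (strategy and main obstacle).} For (a) I would feed the triplet $(2^{p+1},3\cdot 2^{p},2^{p})_+$ into the general lower bounds on cycle lengths and the two algorithms announced in Section~4: these force any hypothetical additional cycle to be extremely long, and a finite machine search then confirms order one (resp. two) below a large bound. One cannot hope to dispose of (a),(b) by a conjugacy $\psi\circ T_p=C\circ\psi$, because cycle length is a conjugacy invariant and $\Omega(1)$ has length $p+2\ne 2$; so any reduction to Collatz must be of a coarser, non-bijective kind. For (b), the first useful structural fact (valid for $p\ge 1$) is that $T_p(n)$ is \emph{even} whenever $2^{p+1}\nmid n$ --- indeed $T_p(n)=3\cdot 2^{p}q_n+2[n]_p$ with $q_n=\lfloor n/2^{p+1}\rfloor$ --- so every orbit enters, and remains among, the even residue classes modulo $2^{p+1}$ until a division step occurs; on this absorbing set one analyses the induced digit dynamics and attempts an Everett--Terras--type density statement (almost every $n$ has an iterate $<n$), which supports but does not prove (b). The genuine obstacle is exactly the Collatz obstacle: the expanding branch $n\mapsto(3n+[n]_p)/2$ acts on a set of relative density $1-2^{-(p+1)}$ while only the single branch $n\mapsto n/2^{p+1}$ contracts, and no known method rules out a drift $T_p^{(k)}(n)\to\infty$ along some exceptional orbit.
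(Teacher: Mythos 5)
This statement is a conjecture, and the paper offers no proof of it beyond deriving it as the specialisation $q=p$ of Conjecture~\ref{MainConjecture2p2q}, verifying the trivial cycles by the same direct computation you perform (cf.\ the proof of Theorem~\ref{trivialcycleTheoremC}), and reporting computational evidence in Section~\ref{ExperimentalSection}; your Step~1 is correct (including the check that $T_2$ cycles $67\to102\to156\to236\to356\to536\to67$) and matches what the paper actually establishes. Your Steps~2--3 correctly isolate the genuinely open content --- uniqueness of the cycles and emptiness of $G_\infty$, which contain the $3n+1$ problem as the case $p=0$ --- so the proposal is as complete as the statement admits and takes essentially the same route as the paper.
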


\section{Lower bound lengths  for cycles  associated to an admissible  triplet}
In this section,  we consider an admissible 
triplet $(d,\alpha,\beta)_{\pmb{\pm}}$ with $\alpha>d\geq 2$. We restrict our attention to the case where the numbers  $d$ and $\alpha$ are supposed to be coprime, $cgd(d,\alpha)=1$ and $\beta>0$.  
Let $\xi$ be an irrational real number.  The number  $\mu= \mu(\xi)$  is said to be an effective irrationality measure  of $\xi$ (see \cite{Rhin1987} for more details)  if for all $\varepsilon>0$,  there exists an integer   $q_0(\varepsilon)>0$  (effectively computable) such that   
\begin{equation}\label{muIrrationalityMeasureepsilonF1}
	\forall (p,q) \in \mathbb{Z}\times \mathbb{N},\quad   q>q_0(\varepsilon) \Longrightarrow
	\Bigl|\dfrac{p}{q}-\xi\Bigr|>\dfrac{1}{q^{\mu+\varepsilon}}.
\end{equation}
By definition, see 	\cite{Bugeaud2015,Waldschmidt2000},
two positive rational numbers are said to be multiplicatively independent if the quotient of their logarithms is irrational. In the following, we fix  $\xi$  to be 
the  real number $\xi=\log_d(\alpha):=\dfrac{\log(\alpha)}{\log(d)}$.  So, the number $\xi$ is  irrational.
Indeed, suppose that  $\xi$ is a rational number, then there exist a pair of integers $(p,q)\in\mathbb{N}^2$ such that $\alpha^q=d^p$, which is impossible since $\alpha$ and $d$ are assumed to be coprime numbers.  We have the following result.
\begin{theorem}
	The number $\xi=\dfrac{\log(\alpha)}{\log(d)}$ is a transcendental real number and 
	the effective irrationality measure  $\mu(\xi)$ of $\xi$ is a finite number. More precisely, there exists a constant $C$ such that: 
	\begin{equation}\label{irrationalitymeasureInquality}
		2\leq \mu(\xi)\leq C(\log \alpha )(\log d ).
	\end{equation}
	
\end{theorem}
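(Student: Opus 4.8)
The plan is to treat the three assertions separately: the transcendence of $\xi$, the universal lower bound $\mu(\xi)\ge 2$, and the upper bound $\mu(\xi)\le C(\log\alpha)(\log d)$. For transcendence I would argue by contradiction. We already know $\xi$ is irrational (rationality would give $\alpha^{q}=d^{p}$ with $q\ge 1$, impossible since $\gcd(\alpha,d)=1$ and $\alpha>1$), so if $\xi$ were algebraic it would be algebraic \emph{irrational}, and then the Gelfond--Schneider theorem, applied to the algebraic number $d\notin\{0,1\}$ with the algebraic irrational exponent $\xi$, would force $d^{\xi}=\alpha$ to be transcendental, contradicting $\alpha\in\mathbb{N}$. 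For the lower bound I would simply invoke Dirichlet's approximation theorem: every irrational has infinitely many fractions $p/q$ with $|p/q-\xi|<q^{-2}$, so no exponent $\mu<2$ can satisfy \eqref{muIrrationalityMeasureepsilonF1}, whence $\mu(\xi)\ge 2$.

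The substance is the upper bound. Starting from the identity
\[
\Bigl|\frac{p}{q}-\xi\Bigr|=\frac{|\Lambda|}{q\log d},\qquad \Lambda:=q\log\alpha-p\log d=\log\!\Bigl(\frac{\alpha^{q}}{d^{p}}\Bigr),
\]
I would first dispose of the trivial case $|p/q-\xi|\ge 1$, so that one may assume $B:=\max(|p|,q)\le c_{0}q$ with $c_{0}=|\xi|+1$. The key point is that $\Lambda$ is a \emph{nonzero} linear form in the two logarithms $\log\alpha,\log d$ of the rational integers $\alpha,d$ --- nonzero because $\alpha^{q}=d^{p}$ with $q\ge 1$ would force every prime dividing $\alpha$ to divide $d$, contradicting $\gcd(\alpha,d)=1$. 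Then I would apply an explicit, effective version of Baker's theorem on linear forms in logarithms (Baker--W\"ustholz, or the sharper two--logarithm estimates of Laurent, Mignotte and Nesterenko; see also the effective measures of Rhin \cite{Rhin1987}) to obtain an absolute constant $C_{1}$ --- the dependence on the number of logarithms, here $2$, and on the degree, here $1$, being absorbed into it --- with
\[
\log|\Lambda|\ \ge\ -\,C_{1}\,(\log\alpha)(\log d)\,\log B .
\]
Combining this with the identity above and $B\le c_{0}q$ yields
\[
\Bigl|\frac{p}{q}-\xi\Bigr|\ \ge\ \frac{c_{0}^{-C_{1}(\log\alpha)(\log d)}}{\log d}\;q^{-1-C_{1}(\log\alpha)(\log d)},
\]
from which one reads off, for every $\varepsilon>0$ and every $q$ larger than an effectively computable $q_{0}(\varepsilon)$, the inequality $|p/q-\xi|>q^{-\mu-\varepsilon}$ with $\mu=1+C_{1}(\log\alpha)(\log d)$. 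Hence $\mu(\xi)$ is finite and, using $\log\alpha,\log d\ge\log 2$, one gets $\mu(\xi)\le 1+C_{1}(\log\alpha)(\log d)\le C(\log\alpha)(\log d)$ for a suitable absolute $C$, which together with $\mu(\xi)\ge 2$ gives \eqref{irrationalitymeasureInquality}. (Note that finiteness of $\mu(\xi)$ is the quantitative refinement of transcendence, so a separate appeal to Gelfond--Schneider is in fact dispensable once the upper bound is in hand.)

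The hard part is entirely the lower bound for $\Lambda$ with the correct shape of the exponent. Purely elementary reasoning (writing $e^{\Lambda}-1=\alpha^{q}/d^{p}-1$ with nonzero integer numerator) yields only $|\Lambda|\gtrsim d^{-p}\asymp\alpha^{-q}$, which is exponentially small in $q$ and therefore says nothing about the irrationality measure; replacing this exponential decay by a bound polynomial in $q$, with the exponent governed by the product $(\log\alpha)(\log d)$, is precisely the content of the quantitative theory of linear forms in two logarithms, and the argument stands or falls on applying that theory in a form where the height and degree dependence is made fully explicit. The remaining points --- non-vanishing of $\Lambda$ from coprimality, and the reduction to $B\le c_{0}q$ --- are routine.
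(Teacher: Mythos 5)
Your proof is correct in its main conclusions, but it takes a genuinely different route for the upper bound: the paper obtains $\mu(\xi)\le C(\log\alpha)(\log d)$ by invoking Theorem~1.1 of Bugeaud \cite{Bugeaud2015} as a black box (setting $a_1=\alpha$, $a_2=1$, $b_1=d$, $b_2=1$ and noting that multiplicative independence follows from the irrationality of $\xi$), whereas you re-derive that statement from first principles by writing $|p/q-\xi|=|\Lambda|/(q\log d)$ for the nonzero linear form $\Lambda=q\log\alpha-p\log d$ and applying an effective Baker-type lower bound. This is essentially the proof of the result the paper cites, so it buys self-containedness, but it requires care about which explicit estimate you use: the Baker--W\"ustholz shape $\log|\Lambda|\ge -C_1(\log\alpha)(\log d)\log B$, with a single power of $\log B$, is the one that works; the Laurent--Mignotte--Nesterenko two-logarithm bounds you mention in passing carry a factor $(\log B')^2$ and would only yield $|p/q-\xi|\ge q^{-1-C\log q}$, which does not bound the irrationality measure at all. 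The transcendence argument (Gelfond--Schneider) and the bound $\mu(\xi)\ge 2$ (Dirichlet, equivalently the convergents) coincide with the paper's. One genuine slip: your closing parenthetical asserts that the appeal to Gelfond--Schneider is dispensable because finiteness of $\mu(\xi)$ is ``the quantitative refinement of transcendence.'' It is not: finiteness of the irrationality measure refines \emph{irrationality}, and every algebraic irrational has $\mu=2<\infty$, so a finite irrationality measure carries no information about transcendence and the Gelfond--Schneider step cannot be dropped. Since you carry out that step anyway, this does not damage the proof, but the remark should be deleted.
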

\begin{proof}
	As $\xi$ is irrational real number and $d^\xi=\alpha$ is an algebraic number, then according to the Gelfond-Schneider theorem, we get that $\xi$ is transcendental real number.  Let  $a_1=\alpha>a_2=1$ and $b_1=d>b_2=1$,  as  $\xi$ 
	is irrational, then $a_1/a_2$ and $b_1/b_2$ are multiplicatively independent.  According to  Theorem~1.1 given in 
	\cite{Bugeaud2015}, see also  \cite{Waldschmidt2000}, there exists an absolute, effectively computable, constant $C$
	such that
	$$\mu\Bigl(
	\dfrac{\log(a_1/a_2)}{log(b_1/b_2) }\Bigr)\leq C(\log a_1 )(\log b_1 ).$$
	It follows that  $\mu(\xi)\leq C(\log \alpha )(\log d )$. The  convergents of the continued fraction of the irrational number $\xi$ imply that $\mu(\xi) \geq  2$. Thus, the effective irrationality measure  of $\xi$  satisfies the inequality \eqref{irrationalitymeasureInquality}. 
	\hfill\end{proof}

According to \eqref{muIrrationalityMeasureepsilonF1},
for $\varepsilon=1$,  there exists an integer $\widehat{q}_0(\xi)>0$ such that
\begin{equation}\label{muIrrationalityMeasureepsilonF2}
	\forall (p,q) \in \mathbb{Z}\times \mathbb{N},\quad   q>\widehat{q}_0(\xi) \Longrightarrow
	\Bigl|\dfrac{p}{q}-\xi\Bigr|>\dfrac{1}{q^{\mu(\xi)+1}}\Longrightarrow \Bigl|p-q\xi\Bigr|>\dfrac{1}{q^{\mu(\xi)}}.
\end{equation}
Let us now recall some properties for the continued fractions which may be found in the literature of the theory of rational approximation, see for instance  \cite{Waldschmidt2015}.
As $\xi$ is an irrational number, there is a unique representation of $\xi$ as an infinite simple continued fraction:
$\xi=[a_0,a_1,a_2,a_3,\ldots]$.
The integer numbers $a_n$ are called  the partial quotients  and  the rational numbers 
$\dfrac{p_n}{q_n} = [a_0,a_1,\ldots,a_n]$
are called  the convergents of $\xi$, where $gcd(p_n,q_n)=1$. The numbers
$x_n= [a_n,a_{n+1},\ldots]$
are called the complete quotients.  For all $n \geq 0$, we have
$$\xi= [a_0,a_1,\ldots,a_n,x_{n+1}]=\dfrac{x_{n+1}p_n+p_{n-1}}{x_{n+1}q_n+q_{n-1}}.$$
The integers $p_n,q_n$ are obtained recursively as follows:
$$\begin{array}{rcl}
	p_{n}&=&a_np_{n-1}+p_{n-2},\\
	q_{n}&=&a_nq_{n-1}+q_{n-2},\\
\end{array}
$$
with the initial values  $p_{-1}=1$, $p_{-2}=0$ and   $q_{-1}=0$, $q_{-2}=1$.
The sequences $(p_n)$ and $(q_n)$ are strictly increasing unbounded sequences.  It is well known that 
the fractions $p_n/q_n$ satisfy the following properties:
For any pair of integers $(p,q)\in\mathbb{N}^2$ and for $n\geq 0$ with $1<q<q_n$ we have:
\begin{equation}\label{Lemma1ContinuedFractions} 
	\dfrac{1}{q_n+q_{n+1}}<	\Bigl|p_n-q_n\xi\Bigr|<\Bigl|p-q\xi\Bigr|.
\end{equation}
It is also well known that  $(p_{2n}/q_{2n})_n$ is an increasing 
sequence and that $(p_{2n+1}/q_{2n+1})_n$ is a decreasing 
sequence  such that
\begin{equation}\label{rapportSequenceOfConvergents}
	\dfrac{p_0}{q_0}<\dfrac{p_2}{q_2}<\dfrac{p_4}{q_4}<\ldots<
	\dfrac{p_{2n}}{q_{2n}}<\dfrac{p_{2n+2}}{q_{2n+2}}<\ldots<\xi<\ldots<	\dfrac{p_{2n+1}}{q_{2n+1}}<\dfrac{p_{2n-1}}{q_{2n-1}}<\ldots<
	\dfrac{p_5}{q_5}<	\dfrac{p_3}{q_3}<\dfrac{p_1}{q_1},
\end{equation}
and $\displaystyle \lim_{n \longrightarrow \infty}\dfrac{p_{n}}{q_{n}}=\xi$.

Let $\Omega$ be an eventual  cycle for  the admissible 
triplet $(d,\alpha,\beta)_{\pmb{\pm}}$  and let $T$ denote  its associated  operator. We denote by 
$\Omega_d:=\{n\in\Omega\;:\; \modd{n}{0}{d}\}$
the subset of $\Omega$  consisting of all the elements  in $\Omega$ that  are congruent to zero modulo $d$ and we denote by $\overline{\Omega}_d:=\{n\in\Omega\;:\; \notmodd{n}{0}{d}\}$, the complement of $\Omega_d$ in $\Omega$. 
Let $L=\#\Omega$ and $\overline{K}=\#\overline{\Omega}_d$ denote the cardinality  of $\Omega$ and $\overline{\Omega}_d$, respectively. Let
$\max(\Omega)$
and  $\min(\Omega)$ denote  the  greatest and smallest    elements  in  the cycle  $\Omega$, respectively.  We have the following Lemma which is an extension to the result given  in \cite{Eliahou1993} in the case of the classical Collatz  problem. 
\begin{lemma}\label{LemmaexistenceCycle}
	A  necessary condition that a
	cycle $\Omega$  associated to  the admissible 
	triplet $(d,\alpha,\beta)_{\pmb{\pm}}$
	exists  is that  the following inequalities hold:
	\begin{equation}\label{InequalityCardCycle1}
		0<\overline{K}\log_d\left(
		1+\dfrac{\beta}{\alpha\max(\Omega)}
		\right)
		< L-\overline{K}\;\xi \leq
		\sum_{n\in\overline{\Omega}_d}
		\log_d\left(
		1+\dfrac{\beta\;(d-1)}{\alpha\;n}\right)\leq
		\dfrac{\beta\;(d-1)}{\alpha\;\log(d)}\sum_{n\in\overline{\Omega}_d}\dfrac{1}{n},
	\end{equation}
	and  
	\begin{equation}\label{InequalityCardCycle2}
		0<	L-\overline{K}\;\xi\leq 
		\overline{K}\;		\log_d\left(
		1+\dfrac{\beta\;(d-1)}{\alpha\;\min(\Omega)}\right)
		\leq
		\dfrac{\overline{K}\;\beta\;(d-1)}{\alpha\;\log(d)\;\min(\Omega)}.
	\end{equation}
\end{lemma}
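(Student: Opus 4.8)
The plan is to run the classical ``telescoping product'' argument of Eliahou, adapted to the map \eqref{mapT}. Since $\Omega$ is a cycle of length $L$, the iterates $\omega,T(\omega),\dots,T^{(L-1)}(\omega)$ are pairwise distinct (because $L$ is the minimal period), so $T$ permutes $\Omega$ and hence $\prod_{n\in\Omega}T(n)=\prod_{n\in\Omega}n$, i.e. $\prod_{n\in\Omega}\frac{T(n)}{n}=1$. Splitting $\Omega=\Omega_d\cup\overline{\Omega}_d$ and using $\frac{T(n)}{n}=\frac1d$ for $n\in\Omega_d$ and $\frac{T(n)}{n}=\frac{\alpha}{d}\bigl(1+\frac{\beta[\kappa_0 n]_d}{\alpha n}\bigr)$ for $n\in\overline{\Omega}_d$, this identity becomes
\[
d^{L}=\alpha^{\overline{K}}\prod_{n\in\overline{\Omega}_d}\Bigl(1+\frac{\beta[\kappa_0 n]_d}{\alpha n}\Bigr),
\]
and taking $\log_d$ gives the exact relation
\[
L-\overline{K}\,\xi=\sum_{n\in\overline{\Omega}_d}\log_d\Bigl(1+\frac{\beta[\kappa_0 n]_d}{\alpha n}\Bigr).
\]
I would record here two consequences of the set-up that I will use repeatedly: first, $\overline{K}\ge 1$ (otherwise the product is empty and $d^{L}=1$, impossible); second, $d\mid\max(\Omega)$, since if $d\nmid\max(\Omega)$ then $T(\max(\Omega))=\frac{\alpha\max(\Omega)+\beta[\kappa_0\max(\Omega)]_d}{d}>\frac{\alpha}{d}\max(\Omega)>\max(\Omega)$ using $\alpha>d$ and $\beta>0$, contradicting maximality.

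Next I would bound the remainders. For $n\in\overline{\Omega}_d$ we have $d\nmid n$, so $[n]_d\in\{1,\dots,d-1\}$, and since $[-n]_d=d-[n]_d$, in both cases $\kappa_0=\pm1$ one obtains $1\le[\kappa_0 n]_d\le d-1$. Together with $\beta>0$ and the monotonicity of $t\mapsto\log_d(1+t)$, each summand of the exact relation satisfies
\[
\log_d\Bigl(1+\frac{\beta}{\alpha n}\Bigr)\ \le\ \log_d\Bigl(1+\frac{\beta[\kappa_0 n]_d}{\alpha n}\Bigr)\ \le\ \log_d\Bigl(1+\frac{\beta(d-1)}{\alpha n}\Bigr).
\]
To get \eqref{InequalityCardCycle1}: the left inequality, combined with the fact that every $n\in\overline{\Omega}_d$ satisfies $n<\max(\Omega)$ (because $d\mid\max(\Omega)$ but $d\nmid n$), yields after summing the $\overline{K}$ terms the strict lower bound $\overline{K}\log_d\bigl(1+\frac{\beta}{\alpha\max(\Omega)}\bigr)<L-\overline{K}\xi$, whose positivity is clear from $\beta>0$; the middle inequality is obtained by summing the right-hand inequality; and the last one applies $\log(1+t)\le t$, i.e. $\log_d(1+t)\le t/\log d$, termwise with $t=\frac{\beta(d-1)}{\alpha n}$.

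To get \eqref{InequalityCardCycle2}: every $n\in\overline{\Omega}_d$ satisfies $n\ge\min(\Omega)$, so by monotonicity each summand of the exact relation is $\le\log_d\bigl(1+\frac{\beta(d-1)}{\alpha\min(\Omega)}\bigr)$; summing the $\overline{K}$ terms gives $L-\overline{K}\xi\le\overline{K}\log_d\bigl(1+\frac{\beta(d-1)}{\alpha\min(\Omega)}\bigr)$, and one final use of $\log_d(1+t)\le t/\log d$ produces the closing bound, while $L-\overline{K}\xi>0$ is inherited from \eqref{InequalityCardCycle1}.

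The whole argument is essentially bookkeeping once the identity $d^{L}=\alpha^{\overline{K}}\prod_{n\in\overline{\Omega}_d}(1+\tfrac{\beta[\kappa_0 n]_d}{\alpha n})$ is in place; the only points that need a little care are (i) verifying that $T$ genuinely permutes $\Omega$, so the product telescopes, (ii) the uniform two-sided bound $1\le[\kappa_0 n]_d\le d-1$ valid for both signs of $\kappa_0$, and (iii) the observation that $d\mid\max(\Omega)$, which is precisely what upgrades the lower bound in \eqref{InequalityCardCycle1} to a \emph{strict} inequality. I do not anticipate any genuine obstacle: this is the natural analogue, for the maps \eqref{mapT}, of the classical cycle inequalities of Eliahou for the $3n+1$ problem.
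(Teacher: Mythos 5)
Your proof is correct and follows essentially the same route as the paper: the telescoping identity $\prod_{n\in\Omega}T(n)/n=1$, the split over $\Omega_d$ and $\overline{\Omega}_d$, the bound $1\le[\kappa_0 n]_d\le d-1$, and $\log(1+t)\le t$. Your two extra observations ($\overline{K}\ge 1$ and $d\mid\max(\Omega)$, the latter justifying the strictness of the first inequality in \eqref{InequalityCardCycle1}) are welcome refinements that the paper's proof leaves implicit, but they do not change the argument.
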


\begin{proof}
	Suppose that a  cycle  $\Omega$ of length $L$ exists, then 
	$\displaystyle \prod_{n\in\Omega}n=\prod_{n\in\Omega}T(n)$.
	It follows that
	$$1=\prod_{n\in\Omega}\left(\dfrac{T(n)}{n}\right)=\left(\prod_{n\in\Omega_d}\dfrac{T(n)}{n}\right)\times \left(\prod_{n\in\overline{\Omega}_d}\dfrac{T(n)}{n}\right)=\dfrac{1}{d^L}\times 	\prod_{n\in \overline{\Omega}_d}\left(\alpha+\frac{\beta[\kappa_0 n]_d}{n}\right).$$
	Then
	\begin{equation*}\label{InequalituPower}
		\left(\alpha+\dfrac{\beta}{\max(\Omega)}\right)^{\overline{K}}\leq\prod_{n\in \overline{\Omega}_d}\left(\alpha+\frac{\beta}{n}\right)
		\leq 
		\prod_{n\in \overline{\Omega}_d}\left(\alpha+\frac{\beta[\kappa_0 n]_d}{n}\right)=d^L
		\leq
		\prod_{n\in \overline{\Omega}_d}\left(\alpha+\frac{\beta(d-1)}{n}\right),
	\end{equation*}
	Passing to  the logarithms, we obtain the following inequality
	$$0<\overline{K}\log_d\left(
	1+\dfrac{\beta}{\alpha\max(\Omega)}
	\right)
	\leq  L-\overline{K}\;\xi \leq
	\sum_{n\in\overline{\Omega}_d}
	\log_d\left(
	1+\dfrac{\beta\;(d-1)}{\alpha\;n}\right).
	$$		
	Using  the fact that $\log(1+x)<x$ for $x>0$,  we get the inequalities \eqref{InequalityCardCycle1}.
	Using  again in \eqref{InequalityCardCycle1} the fact that $\log(1+x)<x$ for $x>0$, we obtain the required inequality	
	(\ref{InequalityCardCycle2}).
	\hfill\end{proof}

In what follows, we establish a lower bound for the length of the eventual cycle derived from Hurwitz's theorem on Diophantine approximation. We have the result.

\begin{theorem}\label{Lower_Hurwitz_Bound}
	Assume that  a cycle $\Omega$ associated to  the admissible 
	triplet $(d,\alpha,\beta)_{\pmb{\pm}}$ exists 
	such that  $\min(\Omega)\geq M_0$, where $M_0$ is a sufficiently large integer. Then, the length $L$ of $\Omega$ satisfies
	\begin{equation}\label{Lower_Bound_Hurwitz}
		L \geq \mu_0 \sqrt{M_0}, \quad \text{where} \quad \mu_0 = \sqrt{\dfrac{\alpha \log(d)}{\beta(d-1)\sqrt{5}}}.
	\end{equation}
\end{theorem}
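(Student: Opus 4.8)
The plan is to combine the second inequality of Lemma~\ref{LemmaexistenceCycle} with a Diophantine lower bound for the approximation of $\xi=\log_d\alpha$ by the rational number $L/\overline K$, and then to pass from $\overline K$ to $L$. First, since $\min(\Omega)\geq M_0$, inequality~\eqref{InequalityCardCycle2} reads $0<L-\overline K\,\xi\leq \dfrac{\overline K\,\beta\,(d-1)}{\alpha\,\log(d)\,M_0}$. A cycle must contain at least one element that is not a multiple of $d$ (otherwise the orbit would decrease strictly under $n\mapsto n/d$), so $\overline K\geq 1$; dividing by $\overline K$ gives
\[
0<\frac{L}{\overline K}-\xi\leq\frac{\beta\,(d-1)}{\alpha\,\log(d)\,M_0}.
\]

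Next I would apply Hurwitz's theorem to the fraction $L/\overline K$ to obtain $\bigl|\xi-\tfrac{L}{\overline K}\bigr|\geq\dfrac{1}{\sqrt5\,\overline K^{2}}$ (reducing $L/\overline K$ to lowest terms only decreases the denominator, so it suffices to have the bound for the reduced fraction). Comparing this with the previous display yields $\dfrac{1}{\sqrt5\,\overline K^{2}}\leq\dfrac{\beta(d-1)}{\alpha\log(d)\,M_0}$, hence $\overline K^{2}\geq\dfrac{\alpha\log(d)}{\sqrt5\,\beta(d-1)}\,M_0=\mu_0^{2}M_0$ and therefore $\overline K\geq\mu_0\sqrt{M_0}$. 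Finally, since $\overline{\Omega}_d\subseteq\Omega$ we have $L=\#\Omega\geq\#\overline{\Omega}_d=\overline K\geq\mu_0\sqrt{M_0}$, which is exactly~\eqref{Lower_Bound_Hurwitz}; this also explains the shape of the constant $\mu_0$.

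The main obstacle is the second step. Hurwitz's theorem in its standard formulation only guarantees that \emph{infinitely many} rationals approximate $\xi$ better than $1/(\sqrt5\,q^{2})$, and does not by itself forbid the particular fraction $L/\overline K$ from being such an exceptionally good approximation — this is precisely why the hypothesis ``$M_0$ sufficiently large'' is needed. To make the step rigorous I would distinguish cases: if $\bigl|\xi-\tfrac{L}{\overline K}\bigr|<1/(\sqrt5\,\overline K^{2})$, then, since $\sqrt5>2$, the reduced fraction is necessarily a convergent $p_n/q_n$ of $\xi$ with $q_n\mid\overline K$, and one can feed the convergent estimate $|p_n-q_n\xi|>1/(q_n+q_{n+1})$ coming from~\eqref{Lemma1ContinuedFractions} back into the displayed inequality above, using the largeness of $M_0$ together with the monotonicity properties~\eqref{rapportSequenceOfConvergents} of the convergents to force $\overline K$ (hence $L$) to be at least $\mu_0\sqrt{M_0}$ anyway. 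Carrying out this case analysis carefully, and making explicit how large $M_0$ must be, is the real content; the elementary arithmetic above already pins down the constant.
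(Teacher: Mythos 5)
Your argument follows the same route as the paper's proof: combine inequality \eqref{InequalityCardCycle2} of Lemma~\ref{LemmaexistenceCycle} with a Hurwitz-type lower bound on $\bigl|\xi-\tfrac{L}{\overline{K}}\bigr|$, and finish with $L\geq\overline{K}$. You have also correctly isolated the one step that does not work: Hurwitz's theorem guarantees infinitely many $p/q$ with $\bigl|\xi-\tfrac{p}{q}\bigr|<\tfrac{1}{\sqrt{5}\,q^{2}}$, and the optimality of $\sqrt{5}$ only says that the constant cannot be enlarged while retaining infinitely many solutions (and only for $\xi$ equivalent to the golden ratio); neither statement forbids the one particular fraction $L/\overline{K}$ from approximating $\xi$ with quality $1/(c_{0}\overline{K}^{2})$ for $c_{0}$ far larger than $\sqrt{5}$. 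For the record, the paper's own proof glosses over exactly this point: it sets $c_{0}=\alpha\log(d)M_{0}/(\beta(d-1)\overline{K}^{2})$ and asserts $c_{0}\leq\sqrt{5}$ on the grounds that $\sqrt{5}$ is optimal, which is not a valid inference for a single approximation.

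Your proposed repair, however, does not close the gap. If the reduced form of $L/\overline{K}$ is a convergent $p_{n}/q_{n}$, then \eqref{Lemma1ContinuedFractions} gives $|p_{n}-q_{n}\xi|>\tfrac{1}{q_{n}+q_{n+1}}$, and feeding this into \eqref{InequalityCardCycle2} yields only $q_{n}(q_{n}+q_{n+1})>\gamma_{0}M_{0}$ with $\gamma_{0}=\alpha\log(d)/(\beta(d-1))$, i.e.\ a lower bound on the product $q_{n}(q_{n}+q_{n+1})$, not on $q_{n}$ or on $\overline{K}$: if the partial quotient $a_{n+1}$ is huge, $q_{n+1}$ absorbs all of $M_{0}$ while $q_{n}$ (and hence possibly $\overline{K}$) stays small. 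No choice of ``sufficiently large $M_{0}$'' cures this, because the continued fraction of $\xi=\log_{d}\alpha$ is not under control; what is needed is precisely an upper bound on $q_{n+1}$ in terms of $q_{n}$, i.e.\ an effective irrationality measure for $\xi$. That is the route the paper actually makes rigorous in Theorems~\ref{Main_theorem_4.3} and~\ref{Main_theorem_4.4}, whose conclusions take the form $\overline{K}\geq\min\bigl(q_{n},\gamma_{0}M/(q_{n}+q_{n+1})\bigr)$ rather than a clean $\sqrt{M_{0}}$ bound. So your diagnosis of where the difficulty lies is exactly right, but the case analysis you sketch cannot be completed as stated, and the bound \eqref{Lower_Bound_Hurwitz} is, on the strength of the written arguments alone, established neither by your proposal nor by the paper's proof.
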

\begin{proof}
	Let  $\displaystyle c_0=\dfrac{\alpha\log(d)M_0}{\beta(d-1)\overline{K}^2} $,
	from Lemma~\ref{LemmaexistenceCycle}, we have
	$$\Bigl|\dfrac{L}{\overline{K}}-\xi\Bigr|\leq \dfrac{\beta(d-1)}{\alpha\log(d)M_0}=\dfrac{\beta(d-1)}{\alpha\log(d)M_0}=\dfrac{1}{c_0 \overline{K}^2},$$
According to Hurwitz's theorem on Diophantine approximation, see for instance \cite{hardywright2008}, there  are infinitely many rational numbers $p/q$  such that
	$$\Bigl|\dfrac{p}{q}-\xi\Bigr|\leq \dfrac{1}{\sqrt{5}\,q^2},$$
	and $\sqrt{5}$ is the optimal constant  value satisfying the last inequality. Then,  $c_0\leq \sqrt {5}$. It follows that
	$$L^2\geq \overline{K}^2\geq \dfrac{\alpha\log(d)M_0}{\beta(d-1)\sqrt{5}}=\mu_0^2 M_0,$$
	which gives the lower bound \eqref{Lower_Bound_Hurwitz}.
\end{proof}

Also from  Lemma~\ref{LemmaexistenceCycle}, we may derive the following theorems.

\begin{theorem}\label{theorem2_K>=}
	Let $(p_n/q_n)$ be the sequence of the convergents of $\xi=\log_d(\alpha)$. 
	A  necessary condition that a
	cycle  of length $L$ associated to  the admissible 
	triplet $(d,\alpha,\beta)_{\pmb{\pm}}$
	exists  is that there exists an integer  $n_0>0$ such that  for all $n\geq n_0$
	the following inequality holds:
	\begin{equation}\label{Kd>qnmu} L\geq\min\Bigl(q_{n},\dfrac{\gamma_0\;  \min(\Omega)}{q_n^{\mu(\xi)}}\Bigr),
	\end{equation}
	where  $\mu(\xi)$ is the irrationality measure of $\xi$ given by 
	\eqref{muIrrationalityMeasureepsilonF1} and
	$\displaystyle \gamma_0=	\dfrac{\alpha\;\log(d)\;}{\beta\;(d-1)}$. 
\end{theorem}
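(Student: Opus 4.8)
The plan is to feed the Diophantine inequality \eqref{InequalityCardCycle2} of Lemma~\ref{LemmaexistenceCycle} into the best–approximation property \eqref{Lemma1ContinuedFractions} of the convergents of $\xi=\log_d(\alpha)$, using the irrationality measure only to control the growth of $q_{n+1}$ relative to $q_n$. First I would rewrite \eqref{InequalityCardCycle2} in the notation $\gamma_0=\dfrac{\alpha\log(d)}{\beta(d-1)}$ as
$$0< L-\overline{K}\,\xi\le \dfrac{\overline{K}}{\gamma_0\,\min(\Omega)}\le \dfrac{L}{\gamma_0\,\min(\Omega)},$$
the last step using $\overline{K}\le L$; here one notes $\overline{K}\ge 1$ because a cycle cannot consist only of multiples of $d$ (on such integers $T$ is the strict contraction $n\mapsto n/d$), so $\overline{\Omega}_d\neq\emptyset$. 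In particular $|L-\overline{K}\xi|\le L/(\gamma_0\min(\Omega))$.

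Next I would record a fact depending only on $\xi$: applying \eqref{muIrrationalityMeasureepsilonF1} with $\varepsilon=1/2$ to the convergents and using the elementary estimate $|p_n-q_n\xi|<1/q_{n+1}$, one gets $q_{n+1}<q_n^{\,\mu(\xi)-1/2}$ for every $n$ with $q_n>q_0(1/2)$; since $\mu(\xi)\ge 2$ this yields $q_n+q_{n+1}<2\,q_n^{\,\mu(\xi)-1/2}\le q_n^{\,\mu(\xi)}$ as soon as moreover $q_n\ge 4$. Hence there is an index $n_0$, depending only on $\xi$, with $q_n+q_{n+1}\le q_n^{\,\mu(\xi)}$ for all $n\ge n_0$; I would also enlarge $n_0$ so that $q_n^{\,\mu(\xi)}>1/c_\xi$ for $n\ge n_0$, where $c_\xi>0$ denotes the distance from $\xi$ to $\mathbb{Z}$ (positive since $\xi$ is irrational).

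Then, for a fixed $n\ge n_0$, I would argue by cases. If $L\ge q_n$ the claimed bound $L\ge\min\bigl(q_n,\gamma_0\min(\Omega)/q_n^{\mu(\xi)}\bigr)$ is immediate. If $L<q_n$ then $1\le\overline{K}\le L<q_n$. When $\overline{K}\ge 2$, inequality \eqref{Lemma1ContinuedFractions} applies to $(p,q)=(L,\overline{K})$ and gives $|L-\overline{K}\xi|>\dfrac{1}{q_n+q_{n+1}}\ge q_n^{-\mu(\xi)}$; combined with $|L-\overline{K}\xi|\le L/(\gamma_0\min(\Omega))$ this forces $L>\gamma_0\min(\Omega)/q_n^{\mu(\xi)}$, hence the claim. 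When $\overline{K}=1$, step one reads $|L-\xi|\le 1/(\gamma_0\min(\Omega))$, so $\gamma_0\min(\Omega)\le 1/c_\xi<q_n^{\mu(\xi)}$ and therefore $\gamma_0\min(\Omega)/q_n^{\mu(\xi)}<1\le L$, which again gives the claim. This establishes the inequality for all $n\ge n_0$ and completes the proof.

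I expect the \emph{main obstacle} to be the second step: turning the irrationality measure into the clean bound $q_n+q_{n+1}\le q_n^{\mu(\xi)}$ valid for \emph{all} sufficiently large $n$, i.e.\ absorbing both the auxiliary $\varepsilon$ coming from \eqref{muIrrationalityMeasureepsilonF1} and the numerical constants into the exponent — this goes through precisely because $\mu(\xi)\ge 2$ leaves enough slack, but it is the only place where care is needed. The boundary case $\overline{K}=1$, where \eqref{Lemma1ContinuedFractions} is not available, is a minor nuisance that is handled by slightly enlarging $n_0$ as above.
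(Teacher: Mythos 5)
Your proof is correct and follows essentially the same route as the paper's: it combines the upper bound \eqref{InequalityCardCycle2} of Lemma~\ref{LemmaexistenceCycle} with the best-approximation property \eqref{Lemma1ContinuedFractions} and the irrationality measure, splitting into the cases $\overline{K}\geq q_n$ and $\overline{K}<q_n$. The only differences are minor: the paper gets $|p_n-q_n\xi|>q_n^{-\mu(\xi)}$ directly from \eqref{muIrrationalityMeasureepsilonF2} applied to $(p_n,q_n)$ rather than via your detour through a bound on $q_{n+1}$, and your separate treatment of $\overline{K}=1$ (a case that \eqref{Lemma1ContinuedFractions} as stated excludes, that genuinely occurs, e.g.\ for the trivial Collatz cycle, and that the paper silently assumes away by writing $1<\overline{K}<q_n$) is a small but real improvement in rigor.
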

\begin{proof}
	Let   $p_n/q_n$ be the convergents to $\xi$. Then,  $(q_n)_n$ is strictly increasing unbounded sequence. It follows that there exists an integer $n_0>0$ such that  $\forall n>n_0$, we have $q_n>\widehat{q}_0(\xi)$, where $\widehat{q}_0(\xi)$ is the integer defined  by \eqref{muIrrationalityMeasureepsilonF2}.
	It follows  that 	$\Bigl|p_n-q_n\xi\Bigr|>\dfrac{1}{q_n^{\mu(\xi)}}$.
	If $\overline{K}\geq q_n$, the required inequality  \eqref{Kd>qnmu} is trivial.  Then, suppose that  $1<\overline{K}<q_n$. According to \eqref{Lemma1ContinuedFractions} and to
	Lemma~\ref{LemmaexistenceCycle},  for $n\geq  n_0$, we have
	$L-\overline{K}\xi=\Bigl|L-\overline{K}\xi\Bigr|\geq
	\Bigl|p_n-q_n\xi\Bigr|>\dfrac{1}{q_n^{\mu(\xi)}}$. Then, we have $0\leq L-\overline{K}\;\xi \leq \dfrac{\overline{K}\;\beta\;(d-1)}{\alpha\;\log(d)\;\min(\Omega)}$.  It follows, that
	$\displaystyle \dfrac{\overline{K}\;\beta\;(d-1)}{\alpha\;\log(d)\;\min(\Omega)}\geq \dfrac{1}{q_n^{\mu(\xi)}}$,  which gives the required result.
	\hfill\end{proof}

Now, we will use similar arguments  to those of the previous theorem to deduce the following result.   We point out that  similar results have been given  by Crandall \cite{Crandall1978}  in the particular case of  the classical Collatz triplet $(2,3,1)_{\pmb{
		+}}$ .  From the  proposed result we may easily derive, as it will be seen, an algorithm that allow  the computation of   a lower bound of $\#\Omega$, see  Algorithm~1 and  the following section.

\begin{theorem}\label{Main_theorem_4.3}
	Let $(p_n/q_n)$ be the sequence of the convergents of $\xi=\log_d(\alpha)$.  A  necessary condition that a cycle $\Omega$ associated to  the admissible 
	triplet $(d,\alpha,\beta)_{\pmb{\pm}}$
	such that  $\min(\Omega)\geq M>0$ exists  is that:
	\begin{equation}\label{Corollary-CardOmega>d^p}
		\overline{K}\geq  R_{\infty}:= \max_{n\geq 1}\Bigl(R_n(M)\Bigr),
	\end{equation}
	where 
	$\displaystyle R_n(M)=\Bigl\lfloor\min\Bigl(q_n,
	\dfrac{\gamma_0\;M}{(q_n+q_{n+1})}\Bigr)\Bigr\rfloor+1$,   the constant 
	$\displaystyle \gamma_0=	\dfrac{\alpha\;\log(d)}{\beta\;(d-1)}$ and  $\lfloor \,.\,\rfloor$ is the floor function.
\end{theorem}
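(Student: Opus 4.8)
The plan is to feed the cycle inequality \eqref{InequalityCardCycle2} of Lemma~\ref{LemmaexistenceCycle} into the best‑approximation property of the convergents of $\xi=\log_d(\alpha)$. Assume a cycle $\Omega$ with $\min(\Omega)\ge M>0$ exists. First I would rewrite \eqref{InequalityCardCycle2}, with the abbreviation $\gamma_0=\alpha\log(d)/(\beta(d-1))$, as
\[
0<L-\overline K\,\xi\ \le\ \frac{\overline K\,\beta(d-1)}{\alpha\log(d)\,\min(\Omega)}\ \le\ \frac{\overline K}{\gamma_0\,M},
\]
so that $\bigl|L-\overline K\,\xi\bigr|\le \overline K/(\gamma_0 M)$; note also $\overline K\ge 1$, since a cycle built only from division steps would be strictly decreasing.

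Next I would fix an index $n\ge1$ and argue by contradiction. If $\overline K<R_n(M)$, then reading off the floor gives $\overline K\le q_n$ \emph{and} $\overline K\le \gamma_0 M/(q_n+q_{n+1})$. The crucial step is the matching lower bound: since $L/\overline K$ is a rational number whose denominator satisfies $1\le\overline K\le q_n$, the best‑approximation property of the convergents yields $\bigl|L-\overline K\,\xi\bigr|>1/(q_n+q_{n+1})$. For $1<\overline K<q_n$ this is precisely \eqref{Lemma1ContinuedFractions}; for $\overline K=q_n$ or $\overline K=1$ it follows by passing to the appropriate record denominator $q_m\le\overline K$ and using the identity $\xi=(x_{n+1}p_n+p_{n-1})/(x_{n+1}q_n+q_{n-1})$ with $x_{n+1}<a_{n+1}+1$, which gives $|q_m\xi-p_m|>1/(q_{m+1}+q_m)$, together with $q_{m+1}+q_m\le q_n+q_{n+1}$ for $m\le n$. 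Combining the two estimates gives $1/(q_n+q_{n+1})<\overline K/(\gamma_0 M)$, i.e.\ $\overline K>\gamma_0 M/(q_n+q_{n+1})$, contradicting $\overline K\le \gamma_0 M/(q_n+q_{n+1})$. Hence $\overline K\ge R_n(M)$ for every $n\ge1$, and taking the maximum over $n$ yields $\overline K\ge R_\infty$.

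Finally I would check that $R_\infty$ is a well‑defined finite integer and that the maximum is attained: since $q_n\to\infty$ we have $\gamma_0 M/(q_n+q_{n+1})\to0$, so $R_n(M)=1$ for all large $n$, whence the supremum reduces to a maximum over finitely many indices. The step I expect to be the main obstacle is the lower bound $\bigl|L-\overline K\,\xi\bigr|>1/(q_n+q_{n+1})$ in the boundary cases $\overline K=q_n$ and $\overline K=1$, where \eqref{Lemma1ContinuedFractions} does not literally apply; the cleanest remedy is to establish once and for all the uniform estimate $|q\xi-p|>1/(q_n+q_{n+1})$ for every integer $p$ and every integer $q$ with $1\le q\le q_n$, which rests on the fact that the best‑approximation denominators of $\xi$ are exactly $q_0=1,q_1,q_2,\dots$ Everything else is a direct chaining of inequalities.
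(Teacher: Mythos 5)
Your proof is correct and follows essentially the same route as the paper: it combines the cycle inequality \eqref{InequalityCardCycle2} of Lemma~\ref{LemmaexistenceCycle} with the best-approximation bound \eqref{Lemma1ContinuedFractions} for the convergents of $\xi$, then passes to the maximum over $n$ after observing that $R_n(M)$ eventually equals $1$. Your contradiction framing is in fact slightly more careful than the paper's direct argument, since it explicitly handles the floor function and the boundary cases $\overline K=1$ and $\overline K=q_n$ that the paper's appeal to $1<\overline K<q_n$ leaves implicit.
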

\begin{proof} 
	First, we will show that the following inequality
	\begin{equation}\label{Kd>minOmega2}
		\overline{K}\geq \min\Bigl(q_n,\dfrac{\gamma_0\;\min(\Omega)}{(q_n+q_{n+1})}\Bigr),
	\end{equation} 
holds for all $n\geq 1$. Indeed,
	let $n\geq 1$, if $ \overline{K}\geq q_n$, then the  required inequality \eqref{Kd>minOmega2}  is obvious. Assume that $ 1<\overline{K}< q_n$, it follows from the inequality  \eqref{Lemma1ContinuedFractions}  that 
	$$L- \overline{K}\xi=|L- \overline{K}\xi|>|p_n-q_n\xi|>\dfrac{1}{q_n+q_{n+1}}.$$
	From Lemma~\ref{LemmaexistenceCycle}, we deduct 
	$\displaystyle \dfrac{\overline{K}\;\beta\;(d-1)}{\alpha\log(d)\min(\Omega)}>\dfrac{1}{q_n+q_{n+1}}$.
	Which prove the required inequality \eqref{Kd>minOmega2}.
	Now, as the sequence $(q_n)_n$ is increasing and unbounded and the sequence 
	$\Bigl(\dfrac{\gamma_0\;\min(\Omega)}{q_n+q_{n+1}}\Bigr)_{n\geq 1}$
	is decreasing and converges to zero. It follows that the sequence 	$\Bigl(R_n(M)
	\Bigr)_n$ 
	is first increasing until it reaches its maximum and after it is decreasing to $1$. So, there exists a positive integer $n_0\geq 1$ such that
	$\displaystyle 
	\max_{n\geq 1}
	\Bigl(R_n(M)
	\Bigr)= R_{n_0}(M)$.
	The required inequality \eqref{Corollary-CardOmega>d^p}  is now an immediate consequence of the  inequality \eqref{Kd>minOmega2}.
	\hfill\end{proof}

Another lower bound of the cardinality $\#\Omega$ for an eventual cycle $\Omega$, may be obtained by  following the idea given  by Eliahou in \cite{Eliahou1993}  using Farey  fractions.   Before that, let us briefly recall some definitions and properties related to  the Farey  fractions, see  \cite{Richards1981,NivenZuckermanMontgomery1991}.  A pair of fractions,  $(a/b,c/d)$ (with $a$, $b$, $c$ and $d$ non-negative integers) is called a Farey pair if $bc-ad=\pm1$.  If 
$\dfrac{a}{b}<\dfrac{c}{d}$, then $bc-ad=1$,  the median of this Farey pair $(a/b,c/d)$
is defined as $(a+c)/(b+d)$. A straightforward calculation shows that 
$$\dfrac{a}{b}<\dfrac{a+c}{b+d}<\dfrac{c}{d},$$
and that  $(a/b,(a+c)/(b+d))$ and  $((a+c)/(b+d),c/d)$
form also two Farey pairs. We may continue the process called Farey process to generate squeezed Farey pairs:
$$
\begin{array}{c}
	\dfrac{a}{b}<\dfrac{c}{d}\\
	\dfrac{a}{b}<\dfrac{a+c}{b+d}<\dfrac{c}{d}\\
	\dfrac{a}{b}<\dfrac{2a+c}{2b+d}<\dfrac{a+c}{b+d}<
	\dfrac{a+2c}{b+2d}<\dfrac{c}{d},\\
	\dfrac{a}{b}<\dfrac{3a+c}{3b+d}<\dfrac{2a+c}{2b+d}<\dfrac{3a+2c}{3b+2d}<\dfrac{a+c}{b+d}<
	\dfrac{2a+3c}{2b+3d}<
	\dfrac{a+2c}{b+2d}<\dfrac{a+3c}{b+3d}<\dfrac{c}{d},
\end{array}
$$
and so on.  Thus, the  sequence generated  by the Farey process is called a Farey sequence.
We observe that, if   $(a/b,c/d)$  is a Farey pair such that $\dfrac{a}{b}<\dfrac{c}{d}$,  then there are  infinite number of fractions of the form
$\dfrac{\alpha a+\beta c}{\alpha b+\beta d}$ such that
$\displaystyle \dfrac{a}{b}<\dfrac{\alpha a+\beta c}{\alpha b+\beta d}<\dfrac{c}{d}$,
with $\alpha\geq 1$ and $\beta\geq 1$ are positive integers.   Usually it is 
sufficient to construct a farey sequence between the Farey pair $(0/1,1/1)$. Then, to obtain a Farey sequence  between any other Farey pair $(a/b,c/d)$, we use the
bijective mapping $f:\mathbb{Q}\cap [\dfrac{0}{1},\dfrac{1}{1}]\longrightarrow \mathbb{Q}\cap [\dfrac{a}{b},\dfrac{c}{d}]$
given by $f(t)=\dfrac{(c-a)t+a}{(d-b)t+b}$ which transform  one-to-one a Farey sequence between  $(0/1,1/1)$ to one between  $(a/b,c/d)$. The inverse mapping of $f$ is obviously   $f^{-1}:\mathbb{Q}\cap[\dfrac{a}{b},\dfrac{c}{d}] \longrightarrow \mathbb{Q}\cap [\dfrac{0}{1},\dfrac{1}{1}]$ given by
$f^{-1}(r)=\dfrac{br-a}{(c-dr)+(br-a)}$.
The following Lemma gives a reciprocal property.  The proof of this Lemma for the Farey pair $(0/1,1/1)$ is given
in \cite{Richards1981}  and in \cite{Eliahou1993} for a general Farey pair $(a/b,c/d)$.   However, for fun of it and for a nice readability of this paper, we well  rewrite this proof given in \cite{Eliahou1993} with slight modifications.

\begin{lemma}\label{Eliahou-Richards-Lemma}
	Let  $\dfrac{a}{b}<\dfrac{c}{d}$ be a Faray pair. Then, every rational number   $\dfrac{x}{y}$ in lowest terms, with  $\dfrac{a}{b}<\dfrac{x}{y}<\dfrac{c}{d}$, appears at some stage of the Farey process, namely, there exist 
	two positive integers $\alpha\geq 1$ and  $\beta\geq 1$
	such that  $\dfrac{x}{y}=\dfrac{\alpha a+\beta c}{\alpha b+\beta d}$.  In particular, $x\geq a+c$ and $y\geq b+d$.
\end{lemma}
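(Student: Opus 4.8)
The plan is to produce the pair $(\alpha,\beta)$ explicitly by inverting a $2\times 2$ integer system, and then to read off its positivity directly from the betweenness hypothesis. Since $(a/b,c/d)$ is a Farey pair with $\dfrac{a}{b}<\dfrac{c}{d}$ we have $bc-ad=1$, i.e. the matrix $\begin{pmatrix}a&c\\ b&d\end{pmatrix}$ has determinant $ad-bc=-1$; hence for the given $\dfrac{x}{y}$ the linear system $\alpha a+\beta c=x$, $\alpha b+\beta d=y$ has a unique solution in integers, namely $\alpha=cy-dx$ and $\beta=bx-ay$. First I would record the verification that these values do recover $\dfrac{x}{y}$, using $bc-ad=1$:
\begin{align*}
\alpha a+\beta c&=(cy-dx)a+(bx-ay)c=x(bc-ad)=x,\\
\alpha b+\beta d&=(cy-dx)b+(bx-ay)d=y(bc-ad)=y.
\end{align*}

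Next I would check $\alpha\geq 1$ and $\beta\geq 1$. Since the denominators $b,d$ and $y$ are positive and $\dfrac{a}{b}<\dfrac{x}{y}<\dfrac{c}{d}$, cross-multiplying gives $ay<bx$ and $dx<cy$; therefore $\beta=bx-ay$ and $\alpha=cy-dx$ are strictly positive integers, hence at least $1$. This already establishes the existence statement. The final clause is then immediate: as $a,b,c,d\geq 0$ and $\alpha,\beta\geq 1$, we obtain $x=\alpha a+\beta c\geq a+c$ and $y=\alpha b+\beta d\geq b+d$.

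What remains is to justify that "there exist such $\alpha,\beta$" is the same thing as "$\dfrac{x}{y}$ appears at some stage of the Farey process." Here I would first note that $\gcd(\alpha,\beta)$ divides both $x=\alpha a+\beta c$ and $y=\alpha b+\beta d$, hence divides $\gcd(x,y)=1$, so $(\alpha,\beta)$ is a coprime pair. Then, descending the Farey tree from $(a/b,c/d)$: passing from a Farey pair to its left or right child replaces the coefficient pair of $\dfrac{x}{y}$ by $(\alpha-\beta,\beta)$ or $(\alpha,\beta-\alpha)$ respectively — that is, it performs one step of the subtractive Euclidean algorithm on $(\alpha,\beta)$ — and the case $\alpha=\beta$ is exactly the case where $\dfrac{x}{y}$ equals the mediant $\dfrac{a+c}{b+d}$ of the current pair. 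Since $\gcd(\alpha,\beta)=1$, the positive integer $\alpha+\beta$ strictly decreases along this descent until it hits the pair $(1,1)$, at which point $\dfrac{x}{y}$ has appeared. The main obstacle, modest as it is, is this last bookkeeping of how the mediant operation acts on the coefficients; by contrast the computational heart of the lemma is the one-line determinant inversion above, so I do not anticipate any genuine difficulty.
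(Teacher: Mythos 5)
Your proof is correct and follows essentially the same route as the paper's: both produce $\alpha = cy-dx$ and $\beta = bx-ay$ (the paper phrases this via the bijection $f$ and its inverse $f^{-1}$, you via inverting the determinant-$(-1)$ matrix), and both deduce $\alpha,\beta\geq 1$ by cross-multiplying the betweenness hypothesis. The only difference is that you additionally supply the descent argument showing that positivity and coprimality of $(\alpha,\beta)$ really do force $\dfrac{x}{y}$ to appear at a finite stage of the Farey process, a point the paper's proof leaves implicit by treating the representation $\dfrac{x}{y}=\dfrac{\alpha a+\beta c}{\alpha b+\beta d}$ as synonymous with appearing in the process.
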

\begin{proof}  Let us consider the bijective mapping $f$ and its inverse $f^{-1}$ given above.  Suppose that $\dfrac{a}{b}<\dfrac{x}{y}<\dfrac{c}{d}$, then
	we have
	$$f^{-1}\Bigl(\dfrac{x}{y}\Bigr)=\dfrac{b\dfrac{x}{y}-a}{(c-d\dfrac{x}{y})+(b\dfrac{x}{y}-a)}=\dfrac{bx-ay}{(cy-dx)+(bx-ay)}=\dfrac{\beta}{\alpha+\beta},$$
	where $\alpha= cy-dx>0$ and $\beta=bx-ay>0$ are positive integers. Thus
	$\dfrac{x}{y} =f\Bigl(\dfrac{\beta}{\alpha+\beta}\Bigr)=\dfrac{\alpha a+\beta c}{\alpha b+\beta d}$.
	\hfill\end{proof}

It is well known that two consecutive convergents $(p_n/q_n)$ and $(p_{n+1}/q_{n+1}) $ satisfy the relation $p_nq_{n+1}-p_{n+1}q_{n}=(-1)^{n+1}$, namely they form a Farey pair.

\begin{theorem}\label{Main_theorem_4.4}
Let $(p_n/q_n)$ be the sequence of the convergents of $\xi=\log_d(\alpha)$ and  consider the sequence $(D_n(M))_n$ given for $n\geq 0$ by
$$D_n(M)=\xi+\log_d\Bigl(1+\dfrac{\beta(d-1)}{\alpha M}\Bigr)-\dfrac{p_n}{q_n}.$$ Assume  that there exists a cycle $\Omega$ associated to  the admissible 
triplet $(d,\alpha,\beta)_{\pmb{\pm}}$ with $\min(\Omega)\geq M$, where $M$ is a positive integer large enough such that
$D_{1}(M)<0$.  Then there exists an integer $n_0$ such that the length $L$  of $\Omega$   satisfies
\begin{equation}\label{p2n0+1}
	L=\#\Omega \geq p_{2n_0+1}.
\end{equation}
\end{theorem}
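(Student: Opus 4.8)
The plan is to convert the existence of the cycle into a one–sided rational approximation of $\xi=\log_d(\alpha)$ and then to pin that approximation between two consecutive convergents by means of the Farey lemma, in the spirit of Eliahou's treatment of the classical case.

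First I would invoke Lemma~\ref{LemmaexistenceCycle}: dividing \eqref{InequalityCardCycle2} by $\overline K$ (which is $\ge 1$, since a cycle cannot consist only of multiples of $d$, as $T$ strictly decreases on such integers) gives
$$\xi<\frac{L}{\overline K}\le\xi+\log_d\Bigl(1+\frac{\beta(d-1)}{\alpha\min(\Omega)}\Bigr)\le\xi+\log_d\Bigl(1+\frac{\beta(d-1)}{\alpha M}\Bigr)=:\eta,$$
the last step because $\min(\Omega)\ge M$ and $x\mapsto\log_d(1+\beta(d-1)/(\alpha x))$ is decreasing. Thus $L/\overline K$ is a rational number lying in the half–open interval $(\xi,\eta]$, and in the notation of the statement $D_n(M)=\eta-p_n/q_n$.

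Next I would locate $\eta$ relative to the convergents. By \eqref{rapportSequenceOfConvergents} the odd convergents $p_{2k+1}/q_{2k+1}$ strictly decrease to $\xi$, while the hypothesis $D_1(M)<0$ means exactly $\xi<\eta<p_1/q_1$. Hence $\{k\ge 0:\ p_{2k+1}/q_{2k+1}>\eta\}$ is non–empty and finite; let $n_0$ be its largest element. Since $p_{2n_0+2}/q_{2n_0+2}<\xi$ (an even convergent), we obtain the chain
$$\frac{p_{2n_0+2}}{q_{2n_0+2}}<\xi<\frac{L}{\overline K}\le\eta<\frac{p_{2n_0+1}}{q_{2n_0+1}},$$
so $L/\overline K$ lies strictly between the consecutive convergents $p_{2n_0+2}/q_{2n_0+2}$ and $p_{2n_0+1}/q_{2n_0+1}$. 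From $p_nq_{n+1}-p_{n+1}q_n=(-1)^{n+1}$ with $n=2n_0+1$ these two form a Farey pair (the smaller listed first), so writing $L/\overline K=x/y$ in lowest terms, Lemma~\ref{Eliahou-Richards-Lemma} yields $x\ge p_{2n_0+2}+p_{2n_0+1}$ (and $y\ge q_{2n_0+2}+q_{2n_0+1}$). Since $L\ge x$, this gives $L\ge p_{2n_0+1}+p_{2n_0+2}\ge p_{2n_0+1}$, which is \eqref{p2n0+1} (in fact a little stronger).

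The main obstacle is not any new estimate — everything flows from Lemma~\ref{LemmaexistenceCycle} and Lemma~\ref{Eliahou-Richards-Lemma} — but the bookkeeping in the middle step: making $n_0$ well defined (which rests on the monotone convergence of the odd convergents recalled in \eqref{rapportSequenceOfConvergents}), checking $\overline K\ge 1$ so that $L/\overline K$ is meaningful, and verifying that the inequalities sandwiching $L/\overline K$ are strict precisely where Lemma~\ref{Eliahou-Richards-Lemma} requires it — strictness at the lower endpoint is automatic since it sits below $\xi$, and strictness at the upper endpoint is exactly what $D_{2n_0+1}(M)<0$ guarantees.
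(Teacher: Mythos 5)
Your proof is correct, and it rests on the same two pillars as the paper's: the sandwich $\xi<L/\overline{K}\leq\eta$ from Lemma~\ref{LemmaexistenceCycle} (inequality \eqref{InequalityCardCycle2}), the interlacing of the convergents \eqref{rapportSequenceOfConvergents}, and the Farey Lemma~\ref{Eliahou-Richards-Lemma}. The one genuine difference is your choice of $n_0$: you take the \emph{last} index with $D_{2n_0+1}(M)<0$, whereas the paper takes the \emph{first} index with $D_{2n_0+1}(M)>0$ (your $n_0$ is the paper's $n_0$ minus one). Your choice places $L/\overline{K}$ strictly between the \emph{consecutive} convergents $p_{2n_0+2}/q_{2n_0+2}$ and $p_{2n_0+1}/q_{2n_0+1}$, which always form a Farey pair, so Lemma~\ref{Eliahou-Richards-Lemma} applies in a single clean case and yields $L\geq p_{2n_0+1}+p_{2n_0+2}$. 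The paper instead runs a three-case analysis, and its third case applies the Farey lemma to the pair $\bigl(p_{2n_0+1}/q_{2n_0+1},\,p_{2n_0-1}/q_{2n_0-1}\bigr)$, whose determinant is $a_{2n_0+1}$ rather than $\pm 1$; when $a_{2n_0+1}\geq 2$ this is not a Farey pair and $L/\overline{K}$ could be an intermediate mediant with numerator $p_{2n_0-1}+jp_{2n_0}<p_{2n_0+1}$, so your single-case argument is actually the more watertight of the two. The price is that your guaranteed bound, rewritten in the paper's indexing, is $p_{2n_0-1}+p_{2n_0}$ rather than $p_{2n_0+1}=a_{2n_0+1}p_{2n_0}+p_{2n_0-1}$ (these coincide exactly when $a_{2n_0+1}=1$), and it is the paper's larger index that Algorithm~\ref{Algo_Therem4.4} computes; since the theorem only asserts the existence of \emph{some} $n_0$, both arguments establish the statement as written. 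Your side remarks ($\overline{K}\geq 1$ because a cycle of pure multiples of $d$ would strictly decrease under $T$, and $L\geq x$ because $x\mid L$ when $L/\overline{K}=x/y$ in lowest terms) are correct and fill in details the paper leaves implicit.
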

	
\begin{proof}
	According to the inequality \eqref{InequalityCardCycle2} in Lemma~\ref{LemmaexistenceCycle} and as 
	$\min(\Omega)\geq M$, it follows that
	\begin{equation}\label{encadrement de xi-1}
		\xi< \dfrac{L}{\overline{K}}\leq \xi+\log_d\Bigl(1+\dfrac{\beta(d-1)}{\alpha\;\min(\Omega)}\Bigr)\leq 
		\xi+\log_d\Bigl(1+\dfrac{\beta(d-1)}{\alpha\;M}\Bigr).
	\end{equation}
	Now, according to \eqref{rapportSequenceOfConvergents}, for all integer $n\geq 0$, we have $D_{2n}(M)\geq 0$ and the sub-sequence  $(D_{2n}(M))_{n\geq 0}$ is decreasing.   As $M$ is large enough such that $D_{1}(M)<0$ and the sub-sequence  $(D_{2n+1}(M))_{n\geq 0}$ is increasing,
	then there exists an integer $n_0\geq 0$ such that 
	$D_1(M)<D_3(M)<\ldots <D_{2n_0-1}(M)<0$ and  $D_{2n_0+1}(M)>0$ which 
	with  \eqref{rapportSequenceOfConvergents} leads to the following inequalities
	\begin{equation}\label{inequalityInTheorem4.4}
		\dfrac{p_{2n_0+2}}{q_{2n_0+2}}<\xi<\dfrac{p_{2n_0+1}}{q_{2n_0+1}}<\xi+\log_d\Bigl(1+\dfrac{\beta(d-1)}{\alpha M}\Bigr)<\dfrac{p_{2n_0-1}}{q_{2n_0-1}}.
	\end{equation}
Taking into account of inequalities \eqref{encadrement de xi-1} and \eqref{inequalityInTheorem4.4}, it follows that either 
	$\dfrac{p_{2n_0+2}}{q_{2n_0+2}}<\dfrac{L}{\overline{K}}<\dfrac{p_{2n_0+1}}{q_{2n_0+1}}$
	or 
	$\dfrac{L}{\overline{K}}=\dfrac{p_{2n_0+1}}{q_{2n_0+1}}$ or
	$\dfrac{p_{2n_0+1}}{q_{2n_0+1}}< \dfrac{L}{\overline{K}}<\dfrac{p_{2n_0-1}}{q_{2n_0-1}}$. 
	Then, $\dfrac{L}{\overline{K}}$ is an intermediate fraction  between  Farey pair and  according to the Lemma~\ref{Eliahou-Richards-Lemma}, it follows that  either 
	$L\geq p_{2n_0+2}+p_{2n_0+1}$ or $L\geq p_{2n_0+1}$ or $L\geq p_{2n_0-1}+p_{2n_0+1}$.   So,  in each case, we have
	$L\geq p_{2n_0+1}$.\hfill\end{proof}

Note that a similar theorem to the previous one has already been given by Eliahou in \cite{Eliahou1993} for the case of the classical  triplet $(2,3,1)_+$. For  Theorem~\ref{Main_theorem_4.4}, we have as in \cite{Eliahou1993} also used Farey fractions. Apart from this common technique, the two methods are different. First, our result is given in the case of a general admissible  triplet. Furthermore, it allows us to derive an algorithm that gives the lower  bound length directly without any analysis, see the next section for the algorithm. Here, we used the sign of the  sequence $D_{2n+1}$ which is first negative and becomes positive at the step $2n_0+1$ corresponding to the required value $p_{2n_0+1}$, see the algorithm~2 in the following section.

%-------------------------------------------------
\section{Experimental tests  }\label{ExperimentalSection}
%------------------------------------------------
In this section, we will make some experimental tests  illustrating some theoretical results given in the previous sections. For instance, We will discuss briefly  computer verification and validity  of the main conjecture~\ref{MainConjecture2p2q} and its special version the 
conjecture~\ref{MainConjecture2p}.  A more detailed verification is under investigations and will be presented in further work. We will also give  numerical  tests to obtain a lower bound length of an eventual cycle by using Theorem~\ref{Main_theorem_4.3} and Theorem~\ref{Main_theorem_4.4}.      Our  computations were carried out using different environments: Python, SageMaths, Mathematica and Matlab. We have used 
an Intel(R) Core i9-CPU\symbol{64}1.80GHz (8 cores) computer with 16 GB of RAM. 

%-----------------------------
\subsection{\bf First few  verifications}
%-----------------------------
We have performed some verifications on the validity of the conjecture~\ref{MainConjecture2p2q} .  The  numerous tests we have performed indicate the validity of this conjecture  for $p$  going from $0$ up to $25$, for $q$ from $0$ to $p$ and 
 for $n$  from $1$ up to $10^{7}$. We recall that the case $p=q =0$ corresponds to the  classical  triplet  $(2,3,1)_{\pmb{+}}$ and it has been verified experimentally with a computer by many authors. For instance,  in  \cite{Oliviera2010}, the author claims that he verified in $2009$ the classical Collatz conjecture  up to 
$2^{62.3}\simeq 5.67\times 10^{18}$ and in  
\cite{Barina2021}, the author claims that he verified in $2025$   the same conjecture  up to  $2^{71}\simeq 2.61\times 10^{21}$. The case $p=3$ and  $q=1$, in the  conjecture~\ref{MainConjecture2p2q},  corresponds to the triplet  $(10,12,8)_{\pmb{+}}$.  We have conjectured in this case that $(10,12,8)_{\pmb{+}}$ is an admissible strong triplet with  the trivial cycle  $\Omega(4)=(4\longrightarrow 8\longrightarrow 16\longrightarrow 24\longrightarrow32\longrightarrow 40\longrightarrow 4)$ as a unique cycle  without any divergent trajectory, i.e.,  for any integer $n\geq 1$, there exists an integer $k\geq 0$ such that $T^{(k)}(n)=2^{3-1}=4$.  We have also verified this affirmation in  special  case  up to
$5\times 10^{11}\simeq 1.82\times 2^{38}$. The speed we have reached was about $4.6\times 10^{6}$ numbers per second,  by  vectorizing    computations in {\tt Matlab}.  This took about  $30$ hours. All the trajectories  starting with  $n\leq 1.82\times 2^{38}$
enter the trivial cycle $\Omega(4)$. 
On the other hand, we may assert that the conjecture~\ref{MainConjecture2p2q}  for $p=3$ and $p=1$   is true  up to  $2^{38}$.  
Of course, we have to improve our speed and our used techniques  in order to increase the limit of  verification. This is our goal for a further work, in which all the techniques and used algorithms will be explained in details. The tests were also coded in Python, SageMaths, Mathematica using $3$ computers in parallel.

%-----------------------------
\subsection{\bf Tests for  Lower bound length}
%-----------------------------

Algorithm~\ref{Algo_Therem4.3} is based on Theorem~\ref{Main_theorem_4.3}, it  computes the minimal lower bound length of an  eventual trivial cycle with  a known minimal element.  This algorithm is based on Theorem~\ref{Main_theorem_4.3}.  Suppose that a cycle $\Omega$ associated to the admissible triplet  $(d,\alpha,\beta)_{\pm}$ exists with its minimal element satisfying $\min(\Omega)\geq M$. Then, according to Theorem~\ref{Main_theorem_4.3}, the inequality 
$\displaystyle \#\Omega\geq 
R_{\infty}:=\max_{n\geq 1} R_n(M)$, holds.  The sequence $(R_n(M)_{n\geq 1}$ is a stationary sequence, more precisely,   there exists an integer $n_0$ such that for all $n\geq n_0$, we have $R_n(M)=1$.

\begin{algorithm}[thbp!]
	\caption{Algorithm based on Theorem~\ref{Main_theorem_4.3}}
	\label{Algo_Therem4.3}
	\begin{algorithmic}[1]
		\STATE{{\bf Input:}  The integers $d$, $\alpha$, $\beta$, $M$ (with the  minimal element of the cycle $min(\Omega)\geq M$ ).}
		\STATE{{\bf Output}  The  integer  $R_{\infty}$}
		\STATE{  $\gamma=\alpha\log(d)/(\beta (d-1))$; \;\; $ \xi=\log(\alpha)/\log(d)$;\;\; $x=\xi$}
		\STATE  $(p_0,q_0,p_1,q_1)=(0,1,1,0)$
		\STATE  $(R_0,R)=(0,1)$
		\WHILE{$R_0\leq R $} 
		\STATE $R_0= R$
		\STATE{$a =\lfloor x\rfloor$}
		\STATE{$p = ap_1+p_0$;\;\; $q=aq_1+q_0$;}
		\STATE{  x=1/(x-a)}
		\STATE{$F =p/q$;}
		\STATE{$Q= \gamma\,M/(q_1+q)$}
		\STATE{$R= \Bigl\lfloor \min(q,Q) \Bigr\rfloor+1$;}
		\STATE{$(p_0,q_0) = (p_1,q_1)$;\;\; $(p_1,q_1) = (p,q)$;}
		\ENDWHILE
		\RETURN $R$
	\end{algorithmic}
\end{algorithm} 

Algorithm~\ref{Algo_Therem4.4} is based on Theorem~\ref{Main_theorem_4.4}. It
also computes the minimal lower bound of the cardinality of an  eventual trivial cycle with  a known minimal element.

\begin{algorithm}[thbp!]
	\caption{Algorithm based on Theorem~\ref{Main_theorem_4.4}}
	\label{Algo_Therem4.4}
	\begin{algorithmic}[1]
		\STATE{{\bf Input:}  The integers $d$, $\alpha$, $\beta$, $M$ (with  $min(\Omega)\geq M$ ).}
		\STATE{{\bf Output}  The  integer   $p_{2n_0+1}$ (given in \eqref{p2n0+1}.}
		\STATE{$\xi = \log(\alpha)/\log(d)$;\;\;$x=\xi $}
		\STATE  $(p_0,q_0,p_1,q_1)=(0,1,1,0)$
		\STATE  $(D_0,D)=(1,-1)$
		\WHILE{$D_0* D\leq 0 $} 
		\STATE $D_0= D$
		\STATE{$a =\lfloor x\rfloor$}
		\STATE{$p = ap_1+p_0$; $q=aq_1+q_0$;}
		\STATE{  x=1/(x-a)}
		\STATE{$F =p/q$;}
		\STATE{$D = \xi+\log_d\Bigl(1+\dfrac{\beta(d-1)}{\alpha M}\Bigr)-F$;}
		\STATE{$(p_0,q_0) = (p_1,q_1)$;\;\;$(p_1,q_1) = (p,q)$;}
		\ENDWHILE
		\RETURN   $p$
	\end{algorithmic}
\end{algorithm} 

\begin{center}
	\begin{figure}[bth]
		\centering
		\includegraphics[width=10cm,height=5cm]{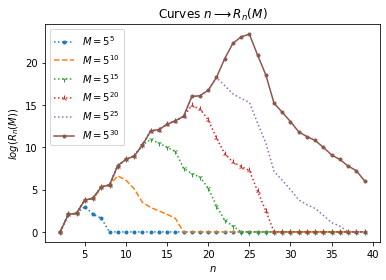}\label{Figure1}
		\caption{\small The curves $n\rightarrow \log(R_n(M))$ for
			different value of $M\in \mathbb{M}$  corresponding to  the triplet
			$(5,6,4)_{\pmb{+}}$.
		}	
	\end{figure}
\end{center}

We have implemented the  Algorithm~\ref{Algo_Therem4.3} and Algorithm~\ref{Algo_Therem4.4}
in {\tt Python} and tested them  in the case of the admissible   triplet
$(5,6,4)_{\pmb{+}}$. In Table~\ref{tab_R_inf}, we  have reported the values of $R_{\infty}(M)$ for different values of $M\in \mathbb{M}=\{5^{5}, 5^{10}, 5^{15}, 5^{20}, 5^{25}, 5^{30}\}$,  assuming that $\min(\Omega)\geq M$, respectively.  The integer $n_0$ corresponds  to the step for which the maximum value is reached. In Figure~1, we have plotted the curves $n\longrightarrow \log(R_n(M))$ for the different values of $M\in \mathbb{M}$.  The curves illustrate the expected behavior of the sequence $(R_n(M))$. Actually, 
we observe  that the  sequence is first increasing until reaching its maximum $R_\infty(M)$ and after it decreases to $1$.  So, for instance, if there exists a non-trivial cycle of minimal element $\min(\Omega)\geq M=5^{30}$, then  its length must satisfy  $\#\Omega \geq  15\,032\,816\,36$. In Table~\ref{tabSub5.2_Rn}, we have given  the results obtained by running the Algorithm~\ref{Algo_Therem4.3}  for $M=M_1:=5^{15}$, $M=M_2:=5^{20}$ and $M=M_3:=5^{25}$.  The  $R_\infty(M)$ is   the boxed value in the column of the $R_n(M)$.

The results from Algorithm~\ref{Algo_Therem4.4}  are summarized in Table~\ref{tabSub5.3} where we give
the value of $p_{2n_0+1}$ for each value of $m\in\mathbb{M}$. We have also reported,  in Table~\ref{tabSub5.3_Dn}, all the values of the sequence $D_n(M)$ given by this algorithm for $M=5^{15},5^{20}, 5^{25}$. We observe in this table that $D_{2n+1}$ changes the sign at $2n_0+1$ step, thus we have $\#\Omega\geq p_{2n_0+1}$. The  $D_{2n_0+1}$  is the boxed value in the column of the $D_n(M)$.

We have observed that  numerically the result from Algorithm~\ref{Algo_Therem4.4} 
does not change for $M\geq 5^{25}$, then the $\#\Omega\geq \max(R_{\infty},p_{2n_0+1})=R_{\infty}$ for $M\geq 5^{25}$. However,  Algorithm~\ref{Algo_Therem4.3} continues to give different results for large values of $M$. For instance, for $\min(\Omega)\geq M=5^{60}$, we get the value  $(R_{\infty} =869802559919868084225$  at the iteration $n_0=40$.

\begin{table}[!t]
	\centering
	\renewcommand{\arraystretch}{1.1}%{1.3}
	\scalebox{1}
	{\begin{tabular}{c|cccccc}
			\hline
			$M$   &$5^{5}$& $5^{10}$ &$5^{15}$&$5^{20}$  &$5^{25}$&$5^{30}$\\
			\hline
			$n_0$&                3                       & 7&             11&16&20&23\\
			\hline
			$R_\infty(M)$& 36      &   2134& 102678&5905570&208606372&15032816369\\
			\hline
	\end{tabular}}
	\caption{\small  The results obtained with $min(\Omega)\geq  M$ with with $M\in A_2$ for the triplet $(5,6,4)_{\pmb{+}}$.
	}
	\label{tab_R_inf}
\end{table} ${}$
%---------------
%-----

\begin{table}[!t]
	\centering
	\renewcommand{\arraystretch}{1.1}%{1.3}
	\scalebox{1}
	{\begin{tabular}{c|cccccc}
			\hline
			%		& \multicolumn{5}{c|}{$\nu=2$, $d=2^\nu+1=5$, $\alpha =2^\nu+2=6$ and $\beta=2^\nu=4$ }  \\
			%		\hline
			$M$   &$5^{5}$& $5^{10}$ &$5^{15}$&$5^{20}$  &$5^{25}$&$5^{30}$\\
			\hline
			$n_0$&               2                       & 3&            5&8&15&15\\
			\hline
			$p_{2n_0+1}$& 226       &   2791&  167863&10850489 &4567472300430581 &4567472300430581 \\
			\hline
	\end{tabular}}
	\caption{\small  The results obtained with $min(\Omega)\geq  M$ with with $M\in A_2$ for the triplet $(5,6,4)_{\pmb{+}}$.
	}
	\label{tabSub5.3}
\end{table}

\begin{table}[!t]
	\centering
	\renewcommand{\arraystretch}{1.1}%{1.3}
	\scalebox{1}
	{\begin{tabular}{r|r|lrrr}
			\hline
			$n$    & $p_n$ &$q_n$ & $R_n(M_1)$ &$R_n(M_2)$&$R_n(M_3)$  \\
			\hline		
			0&                                  1&1                   & 1& 1&1\\
			1&                                 9&8                    &8 &8&8\\
			2&                                 10&9                   &           9&9&9\\
			3&                                 49&44                &             44&44&44\\
			4&                                 59&53                                 &             53&53&53\\
			5&                                226&203                                &             203&203&203\\
			6&                                285&256                                &            256&256&256\\
			7&                               2791&2507                               &            2507&2507&2507\\
			8&                               5867&5270                               &           5270&5270&5270\\
			9&                               8658&7777                               &           7777&7777&7777\\
			10&                              31841&28601                              &           28601&28601&28601\\
			11&                             167863&150782                             &          \fbox{102678}&150782&150782\\
			12&                             199704&179383                             &           55786&179383 &179383 \\
			13&                             367567&330165                             &          36147&330165 &330165 \\
			14&                             567271&509548                             &          21935&509548  &509548  \\
			15&                             934838&839713                             &          13651&839713&839713 \\
			16&                            9915651&8906678                            &           1890&\fbox{5905570}&8906678 \\
			17&                           10850489&9746391                            &            988&3085712&9746391 \\
			18&                           20766140&18653069                           &            649&2026729&18653069 \\
			19&                           93915049&84358667                           &            179&558752&84358667 \\
			20&                          866001581&777881072                          &             22&66755&\fbox{208606372}\\
			\hline
	\end{tabular}}
	\caption{\small  The results obtained with $min(\Omega)\geq  M_i$ with with $M_1= 5^{15}$, $M_2=5^{20}$, $M_3=5^{25}$ for 
		the triplet $(5,6,4)_{\pmb{+}}$.
	}
	\label{tabSub5.2_Rn}
\end{table} 
%---------------
%-----

\subsection{Verification of the  lower bound in Theorem~\ref{Lower_Hurwitz_Bound}}
 In \cite{Barina2021} and more recently in 2025, the author verified the classical Collatz conjecture for all integer $n\leq M_0=2^{71}$.  According to Theorem~\ref{Lower_Hurwitz_Bound}, if an hypothetical  cycle $\Omega$ exists with $\min(\Omega)>M_0$, then the length $L$ of $\Omega$ must satisfies $L\geq \mu_0\sqrt{M_0}\approx 46\,859\,289\,878$. While with Theore~\ref{Main_theorem_4.4}, we obtain $L\geq p_{23}=217\,976\,794\,617$.

\section{Conclusion}
In this paper, we have introduced and investigated a simple and unified extension of the classical Collatz problem. Several general theorems have been established, and new conjectures have been formulated (Conjectures~\ref{MainConjecture2p2q}- \ref{MainConjecture2p} ), providing a broader framework that includes the original Collatz conjecture as a particular case.
Furthermore, 
we have examined lower bounds for the cardinality of possible cycles, leading to efficient algorithms for their computation. Numerical experiments and computational verifications have been carried out to illustrate and support our conjectures. Further investigations are currently underway to obtain deeper theoretical results and  numerical tests.

\begin{table}[!t]
	\centering
	\renewcommand{\arraystretch}{1.1}%{1.3}
	\scalebox{1}
	{\begin{tabular}{r|r|lrrr}
			\hline
			$n$    & $p_n$ &$q_n$ & $D_n(M_1)$ &$D_n(M_2)$&$D_n(M_3)$  \\
			\hline
			
			0&                                  1&1                   & 1.13e-1& 1.13e-1&1.13e-1 \\
			1&                                 9&8                    &-1.17e-2 &-1.17e-2 & -1.17e-2 \\   
			2&                                 10&9                   & 2.17e-3&2.17e-3 & 2.17e-3\\
			3&                                 49&44                & -3.54e-4&-3.54e-4&-3.54e-4\\
			4&                                 59&53                                 & 7.52e-5& 7.52e-5 &7.52e-5\\
			5&                                226&203                                &             -1.77e-5&-1.77e-5&-1.77e-5\\
			6&                                285&256                                &            1.50e-6 & 1.50e-6 &1.50e-6 \\
			7&                               2791&2507                               &            -5.55e-8&-5.56e-8&-5.56e-8\\
			8&                               5867&5270                               &           2.02e-8&2.01e-8&2.01e-8\\
			9&                               8658&7777                               &           -4.23e-9&-4.29e-9&-4.29e-9\\
			10&                              31841&28601                              &            2.62e-10&2.08e-10&2.08e-10\\
			\fbox{11}&                             \fbox{167863}&150782                             &          \fbox{3.05e-11}&-2.38e-11&-2.38e-11\\
			12&                             199704&179383                             &          6.74e-11& 1.32e-11 &1.32e-11\\
			13&                             367567&330165                             &         5.06e-11&-3.72e-12 &-3.73e-12\\
			14&                             567271&509548                             &          5.65e-11 &2.23e-12 &2.21e-12\\
			15&                             934838&839713                             &          5.42e-11& -1.10e-13&-1.27e-13\\
			16&                            9915651&8906678                            &          5.43e-11    &2.40e-14&6.64e-15\\
			\fbox{17}&                           \fbox{10850489}&
			9746391&       5.43e-11     &\fbox{1.25e-14} &-4.88e-15\\
			18&                           20766140&18653069                           &            5.43e-11&1.80e-14&6.21e-16\\
			19&                           93915049&84358667                           &         5.43e-11&1.74e-14&-1.50e-17 \\
			20&                          866001581&777881072                          &             5.43e-11&1.74e-14&2.54e-19 \\
			21&                         5289924535&4751645099                         &              5.43e-11&1.74e-14&-1.65e-20\\
			22&                        11445850651&10281171270                        &              5.43e-11&1.74e-14&3.93e-21\\
			23&                        16735775186&15032816369                        &              5.43e-11&1.74e-14&-2.54e-21\\
			24&                        28181625837&25313987639                        &              5.43e-11&1.74e-14&8.82e-23\\
			25&                       495823414415&445370606232                       &              5.43e-11&1.74e-14&-5.00e-25\\
			26&                      4986415769987&4479020049959                      &              5.43e-11&1.74e-14&1.63e-27\\
			27&                    150088296514025&134815972105002                    &              5.43e-11    &1.74e-14&-2.35e-29\\
			28&                    305163008798037&274110964259963                    &              5.43e-11&1.74e-14&3.50e-30\\
			29&                   1065577322908136&957148864884891                    &              5.43e-11&1.74e-14&-3.00e-31\\
			30&                   3501894977522445&3145557558914636                   &              5.43e-11&1.74e-14&1.00e-31 \\
			\fbox{31}&                   \fbox{4567472300430581}&4102706423799527                   &              5.43e-11&1.74e-14&\fbox{0.00e-29 }\\
			\hline
	\end{tabular}}
	\caption{\small  The results obtained with $min(\Omega)\geq  M_i$ with with $M_1= 5^{15}$, $M_2=5^{20}$, $M_3=5^{25}$for 
		the triplet $(5,6,4)_{\pmb{+}}$.
	}
	\label{tabSub5.3_Dn}
\end{table} 
%---------------
%-----

\newpage

%\bibliographystyle{elsarticle-num}
%\bibliography{references}

%--------------------------------------------------
\end{document}